\newcommand{\tnorm}{\@ifstar\@tnorms\@tnorm}
\newcommand{\@tnorms}[1]{%
  \left|\mkern-1.5mu\left|\mkern-1.5mu\left|
   #1
  \right|\mkern-1.5mu\right|\mkern-1.5mu\right|
}
\newcommand{\@tnorm}[2][]{%
  \mathopen{#1|\mkern-1.5mu#1|\mkern-1.5mu#1|}
  #2
  \mathclose{#1|\mkern-1.5mu#1|\mkern-1.5mu#1|}
}
\newcommand{\jump}[1]{\llbracket #1 \rrbracket}
\newtheorem{theorem}{Theorem}
\newtheorem{lemma}{Lemma}
\newtheorem{remark}{Remark}
\newtheorem{definition}{Definition}
\title{Parameter-robust preconditioning for hybridizable symmetric
  discretizations}
\author{
  E. Henr\'iquez\thanks{Department of Applied Mathematics, University of
    Waterloo, ON, Canada (\url{ehenriqu@uwaterloo.ca}),
    \url{http://orcid.org/0000-0002-0243-0368}}
  \and
  J. J. Lee\thanks{Department of Mathematics, Baylor University,
    TX, USA (\url{jeonghun_lee@baylor.edu}),
    \url{https://orcid.org/0000-0001-5201-8526}}
  \and
  S. Rhebergen\thanks{Department of Applied Mathematics, University of
    Waterloo, ON, Canada (\url{srheberg@uwaterloo.ca}),
    \url{http://orcid.org/0000-0001-6036-0356}}}
\begin{document}
\maketitle
\begin{abstract}
  Hybridizable discretizations allow for the elimination of local
  degrees-of-freedom leading to reduced linear systems. In this paper,
  we determine and analyse an approach to construct parameter-robust
  preconditioners for these reduced systems. Using the framework of
  Mardal and Winther (Numer. Linear Algebra Appl., 18(1):1--40, 2011)
  we first determine a parameter-robust preconditioner for the full
  system. We then eliminate the local degrees-of-freedom of this
  preconditioner to obtain a preconditioner for the reduced
  system. However, not all reduced preconditioners obtained in this
  way are automatically robust. We therefore present conditions that
  must be satisfied for the reduced preconditioner to be robust. To
  demonstrate our approach, we determine preconditioners for the
  reduced systems obtained from hybridizable discretizations of the
  Darcy and Stokes equations. Our analysis is verified by numerical
  examples in two and three dimensions.
\end{abstract}
\section{Introduction}
\label{s:introduction}
Hybridizable discretizations of partial differential equations (PDEs)
allow local degrees-of-freedom to be eliminated from the
discretization. These discretizations have appeared, for example, in
the context of continuous Galerkin methods
\cite{karniadakis2005spectral,kirby2012cg}, hybrid mixed methods
\cite{arnold1985mixed,fraeijs1965displacement}, hybridizable
discontinuous Galerkin (HDG) methods and their variants
\cite{cockburn2009unified,soon2009hybridizable}, Weak Galerkin (WG)
\cite{wang2013weak}, and hybrid high-order methods (HHO)
\cite{di2015hybrid,di2015hybrid2}. In terms of solving reduced
problems, various preconditioners have been studied for hybridizable
discretizations, for example, for hybrized mixed methods
\cite{cowsar1995balancing,dobrev2019algebraic,gopalakrishnan2003schwarz,gopalakrishnan2009convergent},
hybridizable discontinuous Galerkin (HDG) methods
\cite{cockburn2014multigrid,fu2021uniform,gander2015analysis,muralikrishnan2020multilevel},
and hybrid high-order methods (HHO)
\cite{botti2022p,di2021h,di2024homogeneous}.

Many PDEs are parameter-dependent and these model parameters may have
a large effect on the solution of the discrete system by a
preconditioned Krylov space method if they are not taken into account
by the preconditioner. A simple approach to construct preconditioners
for discretizations of parameter-dependent systems was proposed by
Mardal and Winther \cite{mardal2011preconditioning} (see also
\cite{hiptmair2006}). This approach constructs preconditioners in a
Hilbert space setting as follows. Let $A:X \to X^{\ast}$ be a bounded
linear operator, where $X$ is a Hilbert space and $X^{\ast}$ its
dual. The preconditioner $P^{-1}$ is designed to be the canonical
mapping $P^{-1}:X^{\ast} \to X$ such that
$\langle P \cdot, \cdot \rangle_{X^{\ast},X}$, with
$\langle \cdot, \cdot \rangle_{X^{\ast},X}$ the duality pairing
between $X^{\ast}$ and $X$, is an inner product on $X$. If $A$, in
addition, satisfies an inf-sup condition and is bounded in the norm
induced by the inner product
$\langle P \cdot, \cdot \rangle_{X^{\ast},X}$, with the inf-sup
stability and boundedness constants independent of the model and
discretization parameters, then $P$ is a parameter-robust
preconditioner. More details are presented in
\cref{ss:preconframework}. Parameter-robust preconditioners using this
framework for hybridizable discretizations have been studied, for
example, in \cite{fu2023uniform} for the Stokes and elasticity
equations, and \cite{kraus2021uniformly} for Biot's consolidation
model.

In this paper, given a parameter-robust preconditioner for a
hybridizable symmetric discretization of a parameter-dependent PDE, we
determine conditions that must be satisfied for a reduced form of this
preconditioner to also be parameter-robust for the reduced form of a
hybridizable discretization (see \cref{thm:SpparamrobSa}). In
\cref{s:application} we demonstrate this approach by determining and
analyzing preconditioners for a hybridized mixed method for the Darcy
equation and for an HDG method for the Stokes equations. Furthermore,
to demonstrate that the conditions of \cref{thm:SpparamrobSa} are
necessary, we present an example of a preconditioner that is robust
for the Darcy problem before static condensation, but not robust after
static condensation. Numerical experiments in \cref{s:num} verify the
analysis while conclusions are drawn in \cref{s:conclusions}.

\section{Parameter-robust preconditioning}
\label{s:ParamPrecond}

After presenting the Mardal--Winther preconditioning framework in
\cref{ss:preconframework}, we present in \cref{ss:afterHyb} conditions
that must be satisfied for a preconditioner to be parameter-robust for
the reduced system after static condensation.

\subsection{Mardal--Winther preconditioning framework}
\label{ss:preconframework}
We briefly summarize the preconditioning framework presented in
\cite{mardal2011preconditioning}. Let $\boldsymbol{X}_h$ be a family
of finite element spaces defined on meshes of mesh size $h$,
$(\cdot, \cdot)_{\boldsymbol{X}_h}$ be a mesh-dependent inner product
on $\boldsymbol{X}_h$, $\norm[0]{\cdot}_{\boldsymbol{X}_h}$ the norm
induced by $(\cdot, \cdot)_{\boldsymbol{X}_h}$, and let
$\boldsymbol{X}_h^{\ast}$ be the dual space of $\boldsymbol{X}_h$. We
denote the duality pairing between $\boldsymbol{X}_h^{\ast}$ and
$\boldsymbol{X}_h$ by
$\langle \cdot, \cdot
\rangle_{\boldsymbol{X}_h^{\ast},\boldsymbol{X}_h}$. Similar notation
is used for finite element spaces other than $\boldsymbol{X}_h$.

Assume $a_h(\cdot, \cdot)$ is a symmetric, $h$-dependent, bilinear
form on $\boldsymbol{X}_h \times \boldsymbol{X}_h$ and let
$\boldsymbol{f}_h \in \boldsymbol{X}_h^{\ast}$. The discrete problem
given by: Find $\boldsymbol{x}_h \in \boldsymbol{X}_h$ such that
\begin{equation}
  \label{eq:wfgen}
  a_h(\boldsymbol{x}_h,\boldsymbol{y}_h) = \langle \boldsymbol{f}_h, \boldsymbol{y}_h \rangle_{\boldsymbol{X}_h^{\ast},\boldsymbol{X}_h}
  \quad \forall \boldsymbol{y}_h \in \boldsymbol{X}_h,
\end{equation}
is well-posed if the following conditions are satisfied:
\begin{subequations}
  \label{eq:wpconditions}
  \begin{align}
    \label{eq:wpconditions_b}
    a_h(\boldsymbol{x}_h, \boldsymbol{y}_h) &\le c_b \norm[0]{\boldsymbol{x}_h}_{\boldsymbol{X}_h}\norm[0]{\boldsymbol{y}_h}_{\boldsymbol{X}_h} && \forall \boldsymbol{x}_h, \boldsymbol{y}_h \in \boldsymbol{X}_h,
    \\
    \label{eq:wpconditions_i}
    c_i &\le \inf_{\boldsymbol{x}_h \in \boldsymbol{X}_h} \sup_{\boldsymbol{y}_h \in \boldsymbol{X}_h} \frac{a_h(\boldsymbol{x}_h, \boldsymbol{y}_h)}{\norm[0]{\boldsymbol{x}_h}_{\boldsymbol{X}_h}\norm[0]{\boldsymbol{y}_h}_{\boldsymbol{X}_h}},
  \end{align}
\end{subequations}
where $c_b>0$ and $c_i>0$ are the boundedness and stability constants.

Let $A:\boldsymbol{X}_h \to \boldsymbol{X}_h^{\ast}$ be defined by
$\langle A\boldsymbol{x}_h,\boldsymbol{y}_h
\rangle_{\boldsymbol{X}_h^{\ast},\boldsymbol{X}_h} =
a_h(\boldsymbol{x}_h,\boldsymbol{y}_h)$ for all
$\boldsymbol{x}_h,\boldsymbol{y}_h \in \boldsymbol{X}_h$. The
following problem is then equivalent to \cref{eq:wfgen}: Find
$\boldsymbol{x}_h \in \boldsymbol{X}_h$ such that
\begin{equation}
  \label{eq:wfgenop}
  A\boldsymbol{x}_h = \boldsymbol{f}_h \text{ in } \boldsymbol{X}_h^{\ast}.
\end{equation}
Krylov space methods are often used to solve
\cref{eq:wfgenop}. However, it is known that if
$\boldsymbol{X}_h^{\ast} \ne \boldsymbol{X}_h$ then a preconditioner
is required for Krylov space methods to be well-defined
\cite{mardal2011preconditioning}. If the preconditioner
$P^{-1}:\boldsymbol{X}_h^{\ast} \to \boldsymbol{X}_h$ is defined by
\begin{equation}
  \label{eq:defOpP}
  (P^{-1}\boldsymbol{f}_h, \boldsymbol{y}_h)_{\boldsymbol{X}_h}
  = \langle \boldsymbol{f}_h, \boldsymbol{y}_h \rangle_{\boldsymbol{X}_h^{\ast},\boldsymbol{X}_h}
  \quad \forall \boldsymbol{y}_h \in \boldsymbol{X}_h,\ \boldsymbol{f}_h \in \boldsymbol{X}_h^{\ast},
\end{equation}
then it can be shown that $\kappa(P^{-1}A)$, the condition number of
$P^{-1}A$, satisfies
\begin{equation*}
  \kappa(P^{-1}A) = \norm[0]{P^{-1}A}_{\mathcal{L}(\boldsymbol{X}_h,\boldsymbol{X}_h)}
  \norm[0]{(P^{-1}A)^{-1}}_{\mathcal{L}(\boldsymbol{X}_h,\boldsymbol{X}_h)} \le c_a/c_i.
\end{equation*}
Here $\mathcal{L}(\boldsymbol{X}_h,\boldsymbol{X}_h)$ is the set of
bounded linear operators mapping $\boldsymbol{X}_h$ to
$\boldsymbol{X}_h$, and we have the following expressions for the
operator norms:
\begin{align*}
  \norm[0]{P^{-1}A}_{\mathcal{L}(\boldsymbol{X}_h,\boldsymbol{X}_h)}
  &= \sup_{\boldsymbol{x}_h,\boldsymbol{y}_h \in \boldsymbol{X}_h}
    \frac{ (P^{-1}A\boldsymbol{x}_h, \boldsymbol{y}_h)_{\boldsymbol{X}_h}}{\norm[0]{\boldsymbol{x}_h}_{\boldsymbol{X}_h}\norm[0]{\boldsymbol{y}_h}_{\boldsymbol{X}_h}},
  \\
  \norm[0]{(P^{-1}A)^{-1}}_{\mathcal{L}(\boldsymbol{X}_h,\boldsymbol{X}_h)}^{-1}
  &= \inf_{\boldsymbol{x}_h \in \boldsymbol{X}_h}\sup_{\boldsymbol{y}_h \in \boldsymbol{X}_h}
    \frac{ (P^{-1}A\boldsymbol{x}_h, \boldsymbol{y}_h)_{\boldsymbol{X}_h}}{\norm[0]{\boldsymbol{x}_h}_{\boldsymbol{X}_h}\norm[0]{\boldsymbol{y}_h}_{\boldsymbol{X}_h}}.
\end{align*}

\begin{remark}
  \label{rem:Phparamrobust}
  The convergence rate of Krylov space methods for symmetric systems
  (e.g., MINRES), depends on the condition number. Therefore, if the
  constants $c_a,c_i$ in \cref{eq:wpconditions} are independent of $h$
  and other model parameters, then $P^{-1}$ is a parameter-robust
  preconditioner.
\end{remark}

In what follows, we will refer to a constant that is independent of
the discretization and model parameters as a uniform constant.

\begin{remark}
  \label{rem:Phinnerprod}
  From \cref{eq:defOpP} we note that
  $\langle P \boldsymbol{x}_h, \boldsymbol{y}_h
  \rangle_{\boldsymbol{X}_h^{\ast},\boldsymbol{X}_h} =
  (\boldsymbol{x}_h, \boldsymbol{y}_h)_{\boldsymbol{X}_h}$. In other
  words, $P:\boldsymbol{X}_h \to \boldsymbol{X}_h^{\ast}$ is the
  operator associated with the inner product on
  $\boldsymbol{X}_h$. Furthermore, note that
  $\langle P \boldsymbol{x}_h, \boldsymbol{x}_h
  \rangle_{\boldsymbol{X}_h^{\ast},\boldsymbol{X}_h} =
  \norm[0]{\boldsymbol{x}_h}_{\boldsymbol{X}_h}^2$ for all
  $\boldsymbol{x}_h \in \boldsymbol{X}_h$. In practice, we may replace
  $P$ by a norm-equivalent operator $\widehat{P}$ such that
  $c_0 \norm[0]{\boldsymbol{x}_h}_{\boldsymbol{X}_h}^2 \le \langle
  \widehat{P}\boldsymbol{x}_h, \boldsymbol{x}_h
  \rangle_{\boldsymbol{X}_h^{\ast},\boldsymbol{X}_h} \le c_1
  \norm[0]{\boldsymbol{x}_h}_{\boldsymbol{X}_h}^2$, with $c_0,c_1$
  positive uniform constants.
\end{remark}

\subsection{Preconditioning for a hybridizable discretization}
\label{ss:afterHyb}

The Mardal--Winther framework presents an approach to designing
parameter-robust preconditioners for discretizations of PDEs. This
framework relies on identifying a norm
$\norm[0]{\cdot}_{\boldsymbol{X}_h}$ for the finite element space
$\boldsymbol{X}_h$ such that the boundedness and stability constants
in \cref{eq:wpconditions} are uniform constants. In this section, we
extend this framework to hybridizable discretizations. In particular,
given a block preconditioner $P^{-1}$ for the block operator
$A:\boldsymbol{X}_h \to \boldsymbol{X}_h^{\ast}$, we identify
conditions that the Schur complement of $P$ must satisfy to be a
parameter-robust preconditioner for the Schur complement of $A$.

Let $\boldsymbol{X}_h := X_h \times \bar{X}_h$ and assume that
\cref{eq:wfgenop} can be written as
\begin{equation}
  \label{eq:wfgenop_block}
  \begin{bmatrix}
    A_{11} & A_{21}^T
    \\
    A_{21} & A_{22}
  \end{bmatrix}
  \begin{bmatrix}
    x_h \\ \bar{x}_h
  \end{bmatrix}
  =
  \begin{bmatrix}
    f_h \\ \bar{f}_h
  \end{bmatrix},
\end{equation}
where $\boldsymbol{x}_h = (x_h,\bar{x}_h) \in X_h \times \bar{X}_h$,
and $A_{11}:X_h \to X_h^{\ast}$, $A_{21}:X_h \to \bar{X}_h^{\ast}$,
and $A_{22}:\bar{X}_h \to \bar{X}_h^{\ast}$ are the operators that
form $A:\boldsymbol{X}_h \to \boldsymbol{X}_h^{\ast}$. Assuming that
it is cheap to eliminate $x_h$, we obtain the following equation for
$\bar{x}_h$:
\begin{equation}
  \label{eq:SCop}
  S_A \bar{x}_h = \bar{b}_h,
  \quad \text{where} \quad
  S_A = A_{22} - A_{21}A_{11}^{-1}A_{21}^T,
  \quad
  \bar{b}_h = \bar{f}_h - A_{21}A_{11}^{-1}f_h.
\end{equation}
Next, let $P_{11}:X_h \to X_h^{\ast}$,
$P_{21}:X_h \to \bar{X}_h^{\ast}$, and
$P_{22}:\bar{X}_h \to \bar{X}_h^{\ast}$ be operators such that
\begin{equation}
  \label{eq:Pblockstructure}
  P =
  \begin{bmatrix}
    P_{11} & P_{21}^T
    \\
    P_{21} & P_{22}
  \end{bmatrix},
\end{equation}
with $P$ defined by \cref{eq:defOpP}. Since $P$ is a positive
operator, in the sense that $P$ is symmetric and
$\langle P\boldsymbol{x}_h,\boldsymbol{x}_h
\rangle_{\boldsymbol{X}_h^{\ast},\boldsymbol{X}_h} > 0$
$\forall \boldsymbol{x}_h \in
\boldsymbol{X}_h\backslash\cbr[0]{\boldsymbol{0}}$, and if $P_{11}$ is
a positive operator ($P_{11}$ is symmetric and
$\langle P_{11}x_h, x_h \rangle_{X_h^{\ast},X_h} > 0$
$\forall x_h \in X_h\backslash\cbr[0]{0}$), then so is the operator
\begin{equation}
  \label{eq:defSCSP}
  S_P := P_{22} - P_{21}P_{11}^{-1}P_{21}^T.
\end{equation}
This follows by considering the matrix representation of the operators
in \cref{eq:Pblockstructure} and \cite[Chapter A.5.5]{Boyd:book}. The
operator $S_{P} : \bar{X}_h \to \bar{X}_h^{\ast}$ can therefore be
used to define an inner product on $\bar{X}_h$ in the following sense:
\begin{equation*}
  \langle S_{P} \bar{x}_h, \bar{y}_h \rangle_{\bar{X}_h^{\ast},\bar{X}_h}
  = (\bar{x}_h, \bar{y}_h)_{\bar{X}_h}
  \qquad \forall \bar{x}_h, \bar{y}_h \in \bar{X}_h.
\end{equation*}
Analogously, we note that
$S_{P}^{-1} : \bar{X}_h^{\ast} \to \bar{X}_h$ is such that
\begin{equation*}
  (S_{P}^{-1}\bar{f}_h, \bar{y}_h)_{\bar{X}_h} = \langle \bar{f}_f, \bar{y}_h \rangle_{\bar{X}_h^*, \bar{X}_h}
  \qquad \forall \bar{f}_h \in \bar{X}_h^{\ast}, \ \bar{y}_h \in \bar{X}_h.
\end{equation*}
The following theorem presents conditions for $S_{P}^{-1}$ to be a
parameter-robust preconditioner for $S_A$.

\begin{theorem}
  \label{thm:SpparamrobSa}
  Let $A$ be the operator with the structure given in
  \cref{eq:wfgenop_block} and let $P$ be the operator satisfying
  \cref{eq:defOpP} with structure given in
  \cref{eq:Pblockstructure}. Furthermore, let $S_A$ and $S_P$ be the
  operators defined in \cref{eq:SCop} and \cref{eq:defSCSP},
  respectively. Assume that $A$ satisfies \cref{eq:wpconditions},
  $A_{11}$ is invertible, and $P_{11}$ is a positive operator. If
  there exists a uniform constant $c_l > 0$ such that
  \begin{equation}
    \label{eq:liftingcondition}
    \norm[0]{(-A_{11}^{-1}A_{21}^T\bar{x}_h,\bar{x}_h)}_{\boldsymbol{X}_h} \le c_l \norm[0]{\bar{x}_h}_{\bar{X}_h}
    \quad \forall \bar{x}_h \in \bar{X}_h,
  \end{equation}
  then
  \begin{equation}
    \label{eq:spectral-equivalence}
    \norm[0]{ S_{P}^{-1} S_{A} }_{\bar{X}_h},\quad \norm[0]{ (S_{P}^{-1} S_{A})^{-1} }_{\bar{X}_h} \quad \text{ are uniformly bounded.}
  \end{equation}
  Moreover, if \cref{eq:spectral-equivalence} holds, then
  \cref{eq:liftingcondition} holds with a uniform constant $c_l$.
\end{theorem}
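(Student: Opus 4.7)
The plan is to reduce everything to the harmonic lifting $L\colon \bar{X}_h \to \boldsymbol{X}_h$, $L\bar{x}_h := (-A_{11}^{-1}A_{21}^T\bar{x}_h,\, \bar{x}_h)$, and its $P$-analogue $L_P\bar{x}_h := (-P_{11}^{-1}P_{21}^T\bar{x}_h,\, \bar{x}_h)$. A direct block computation gives $AL\bar{x}_h = (0,\, S_A\bar{x}_h)$ and $PL_P\bar{x}_h = (0,\, S_P\bar{x}_h)$, from which two identities follow: (i) $a_h(L\bar{x}_h,\,(y_h,\bar{y}_h)) = \langle S_A\bar{x}_h,\bar{y}_h\rangle$ for \emph{every} $(y_h,\bar{y}_h)\in\boldsymbol{X}_h$ (so the first block of the test function is irrelevant), and in particular $a_h(L\bar{x}_h,L\bar{y}_h) = \langle S_A\bar{x}_h,\bar{y}_h\rangle$; (ii) the correction from $L_P\bar{x}_h$ to any $(x_h,\bar{x}_h)$ is $(\cdot,\cdot)_{\boldsymbol{X}_h}$-orthogonal to $L_P\bar{x}_h$, which yields
\[
  \norm[0]{\bar{x}_h}_{\bar{X}_h}^2 \;=\; \langle S_P\bar{x}_h,\bar{x}_h\rangle \;=\; \norm[0]{L_P\bar{x}_h}_{\boldsymbol{X}_h}^2 \;=\; \inf_{x_h \in X_h}\norm[0]{(x_h,\bar{x}_h)}_{\boldsymbol{X}_h}^2.
\]
In particular $\norm[0]{\bar{x}_h}_{\bar{X}_h} \le \norm[0]{L\bar{x}_h}_{\boldsymbol{X}_h}$ holds automatically, and \cref{eq:liftingcondition} is precisely the converse inequality.

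Since $S_P$ is the Riesz map for $(\cdot,\cdot)_{\bar{X}_h}$, the operator-norm formulae from \cref{ss:preconframework} reformulate \cref{eq:spectral-equivalence} as uniform Brezzi boundedness and inf-sup of $\langle S_A\cdot,\cdot\rangle$ on $(\bar{X}_h,\norm[0]{\cdot}_{\bar{X}_h})$. For the forward direction, boundedness is immediate from (i), (ii), \cref{eq:wpconditions_b} and \cref{eq:liftingcondition}: $\langle S_A\bar{x}_h,\bar{y}_h\rangle = a_h(L\bar{x}_h,L\bar{y}_h) \le c_b\norm[0]{L\bar{x}_h}_{\boldsymbol{X}_h}\norm[0]{L\bar{y}_h}_{\boldsymbol{X}_h} \le c_b c_l^2\norm[0]{\bar{x}_h}_{\bar{X}_h}\norm[0]{\bar{y}_h}_{\bar{X}_h}$. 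For the inf-sup, I apply \cref{eq:wpconditions_i} to $L\bar{x}_h$ to obtain $\boldsymbol{y}_h=(y_h,\bar{y}_h)$ with $a_h(L\bar{x}_h,\boldsymbol{y}_h) \ge c_i\norm[0]{L\bar{x}_h}_{\boldsymbol{X}_h}\norm[0]{\boldsymbol{y}_h}_{\boldsymbol{X}_h}$; identity (i) turns the left side into $\langle S_A\bar{x}_h,\bar{y}_h\rangle$, and the lower bounds $\norm[0]{L\bar{x}_h}_{\boldsymbol{X}_h}\ge\norm[0]{\bar{x}_h}_{\bar{X}_h}$ and $\norm[0]{\boldsymbol{y}_h}_{\boldsymbol{X}_h}\ge\norm[0]{\bar{y}_h}_{\bar{X}_h}$ from (ii) deliver inf-sup constant $c_i$, independent of $c_l$.

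For the converse, let $C$ denote the boundedness constant from \cref{eq:spectral-equivalence}, so $\langle S_A\bar{x}_h,\bar{y}_h\rangle \le C\norm[0]{\bar{x}_h}_{\bar{X}_h}\norm[0]{\bar{y}_h}_{\bar{X}_h}$. Applying \cref{eq:wpconditions_i} to $L\bar{x}_h$ once more produces $\boldsymbol{y}_h=(y_h,\bar{y}_h)$ with $\langle S_A\bar{x}_h,\bar{y}_h\rangle = a_h(L\bar{x}_h,\boldsymbol{y}_h) \ge c_i\norm[0]{L\bar{x}_h}_{\boldsymbol{X}_h}\norm[0]{\bar{y}_h}_{\bar{X}_h}$; cancelling $\norm[0]{\bar{y}_h}_{\bar{X}_h}$ against the upper bound yields $\norm[0]{L\bar{x}_h}_{\boldsymbol{X}_h} \le (C/c_i)\norm[0]{\bar{x}_h}_{\bar{X}_h}$, which is \cref{eq:liftingcondition}. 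Notably the inf-sup half of \cref{eq:spectral-equivalence} is not needed for this direction.

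The main obstacle I anticipate is the initial bookkeeping: identifying the $\bar{X}_h$-norm induced by $S_P$ with the minimum-energy extension norm in $\boldsymbol{X}_h$, and exploiting that $AL\bar{x}_h$ has zero first block so the first component of any test function can be modified freely. Once these are recorded, both implications collapse into one-line applications of the inf-sup of $A$ evaluated at the lifted argument $L\bar{x}_h$.
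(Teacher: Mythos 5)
Your proposal is correct and follows essentially the same route as the paper's proof: your lifting operator $L$ and the identity $AL\bar{x}_h=(0,S_A\bar{x}_h)$ are exactly the paper's choice $x_h=-A_{11}^{-1}A_{21}^T\bar{x}_h$ in the completed-square decomposition \cref{eq:AuhvhgeSA}, your minimum-energy-extension identity is the paper's \cref{eq:Sp-minimal}, and you obtain the same constants ($c_l^2c_b$ for boundedness, $c_i$ for the inf-sup). The only cosmetic difference is that you prove the converse directly (yielding $c_l\le C/c_i$) where the paper argues by contradiction with a sequence $C_k\to\infty$, and you correctly observe, as the paper's Step 2 implicitly does, that only the boundedness half of \cref{eq:spectral-equivalence} is needed for that direction.
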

\begin{proof}
  \textbf{Step 1.} We first prove that \cref{eq:liftingcondition}
  implies \cref{eq:spectral-equivalence}. From
  \cref{eq:wpconditions_b}, we have that
  \begin{align*}
    c_b
    & \ge
      \sup_{\substack{\boldsymbol{x}_h \in \boldsymbol{X}_h
    \\
    \boldsymbol{y}_h \in \boldsymbol{X}_h}}
    \frac{\langle A\boldsymbol{x}_h,\boldsymbol{y}_h\rangle_{\boldsymbol{X}_h^{\ast},\boldsymbol{X}_h}}{\norm[0]{\boldsymbol{x}_h}_{\boldsymbol{X}_h}\norm[0]{\boldsymbol{y}_h}_{\boldsymbol{X}_h}}
    \\
    &=
      \sup_{\substack{\boldsymbol{x}_h \in \boldsymbol{X}_h
    \\
    \boldsymbol{y}_h \in \boldsymbol{X}_h}}
    \frac{\langle A_{11}(x_h + A_{11}^{-1}A_{21}^T\bar{x}_h), y_h + A_{11}^{-1}A_{21}^{T}\bar{y}_h \rangle_{X_h^{\ast},X_h}
    + \langle S_A\bar{x}_h, \bar{y}_h \rangle_{\bar{X}_h^{\ast},\bar{X}_h}}{\norm[0]{\boldsymbol{x}_h}_{\boldsymbol{X}_h}\norm[0]{\boldsymbol{y}_h}_{\boldsymbol{X}_h}}.
  \end{align*}
  As we are taking the supremum, we can bound below by choosing
  $x_h = - A_{11}^{-1}A_{21}^T \bar{x}_h$ and
  $y_h = - A_{11}^{-1}A_{21}^T \bar{y}_h$. We obtain:
  \begin{equation*}
    c_b \ge
    \sup_{\bar{x}_h,\bar{y}_h \in \bar{X}_h}
    \frac{\langle S_A\bar{x}_h, \bar{y}_h \rangle_{\bar{X}_h^{\ast},\bar{X}_h}}
    {\norm[0]{(-A_{11}^{-1}A_{21}^T\bar{x}_h, \bar{x}_h)}_{\boldsymbol{X}_h}\norm[0]{(-A_{11}^{-1}A_{21}^T\bar{y}_h, \bar{y}_h)}_{\boldsymbol{X}_h}}.
  \end{equation*}
  Combined with \cref{eq:liftingcondition},
  \begin{equation*}
      c_l^2c_b
      \ge
      \sup_{\bar{x}_h,\bar{y}_h \in \bar{X}_h}
      \frac{\langle S_A\bar{x}_h, \bar{y}_h \rangle_{\bar{X}_h^{\ast},\bar{X}_h}}
      {\norm[0]{\bar{x}_h}_{\bar{X}_h}\norm[0]{\bar{y}_h}_{\bar{X}_h}}
      =
      \sup_{\bar{x}_h,\bar{y}_h \in \bar{X}_h}
      \frac{(S_P^{-1} S_A\bar{x}_h, \bar{y}_h)_{\bar{X}_h}}
      {\norm[0]{\bar{x}_h}_{\bar{X}_h}\norm[0]{\bar{y}_h}_{\bar{X}_h}}
      =
      \norm[0]{S_P^{-1}S_A}_{\mathcal{L}(\bar{X}_h,\bar{X}_h)}.      
  \end{equation*}
  Next, observe that
  \begin{multline}
    \label{eq:AuhvhgeSA}
    \langle A\boldsymbol{x}_h, \boldsymbol{y}_h\rangle_{\boldsymbol{X}_h^{\ast},\boldsymbol{X}_h}
    \\
    =
    \langle A_{11}(x_h + A_{11}^{-1}A_{21}^T\bar{x}_h), y_h + A_{11}^{-1}A_{21}^{T}\bar{y}_h \rangle_{X_h^{\ast},X_h}
    + \langle S_A\bar{x}_h, \bar{y}_h \rangle_{\bar{X}_h^{\ast},\bar{X}_h}.
  \end{multline}
  From \cref{eq:wpconditions_i} we have:
  \begin{equation}
    \label{eq:ciSASP}
    \begin{split}
      c_i &\le \inf_{\boldsymbol{x}_h \in \boldsymbol{X}_h} \sup_{\boldsymbol{y}_h \in \boldsymbol{X}_h}
      \frac{\langle A\boldsymbol{x}_h, \boldsymbol{y}_h\rangle_{\boldsymbol{X}_h^{\ast},\boldsymbol{X}_h}}{\norm[0]{\boldsymbol{x}_h}_{\boldsymbol{X}_h}\norm[0]{\boldsymbol{y}_h}_{\boldsymbol{X}_h}}
      \\
      &\le
      \inf_{\bar{x}_h \in \bar{X}_h}
      \sup_{(v_h,\bar{y}_h) \in X_h \times \bar{X}_h}
      \frac{\langle S_A\bar{x}_h, \bar{y}_h \rangle_{\bar{X}_h^{\ast},\bar{X}_h}}{\norm[0]{(-A_{11}^{-1}A_{21}^T\bar{x}_h,\bar{x}_h)}_{\boldsymbol{X}_h}\norm[0]{(v_h,\bar{y}_h)}_{\boldsymbol{X}_h}},
    \end{split}
  \end{equation}
  where, to obtain the second inequality, we used \cref{eq:AuhvhgeSA}
  and chose $x_h = -A_{11}^{-1}A_{21}^T\bar{x}_h$. We further observe that
  \begin{equation}
    \label{eq:Sp-minimal}
    \norm[0]{(w_h,\bar{w}_h)}_{\boldsymbol{X}_h}^2
    =\langle P\boldsymbol{w}_h, \boldsymbol{w}_h \rangle_{\boldsymbol{X}_h^{\ast},\boldsymbol{X}_h}
    \ge \langle S_P \bar{w}_h, \bar{w}_h \rangle_{\bar{X}_h^{\ast},\bar{X}_h}
    = \norm[0]{\bar{w}_h}_{\bar{X}_h}^2,
  \end{equation}
  for all $\boldsymbol{w}_h \in \boldsymbol{X}_h$,
  where the inequality follows by replacing $A$ by $P$ in
  \cref{eq:AuhvhgeSA} and using that $P_{11}$ is a symmetric positive
  operator. Combined with \cref{eq:ciSASP},
  \begin{multline}
    \label{eq:ciSASP2}
    c_i \le
    \inf_{\bar{x}_h \in \bar{X}_h}
    \sup_{\bar{y}_h \in \bar{X}_h}
    \frac{\langle S_A\bar{x}_h, \bar{y}_h \rangle_{\bar{X}_h^{\ast},\bar{X}_h}}
    {\norm[0]{\bar{x}_h}_{\bar{X}_h}\norm[0]{\bar{y}_h}_{\bar{X}_h}}
    =
    \inf_{\bar{x}_h \in \bar{X}_h}
    \sup_{\bar{y}_h \in \bar{X}_h}
    \frac{(S_P^{-1}S_A\bar{x}_h, \bar{y}_h )_{\bar{X}_h}}
    {\norm[0]{\bar{x}_h}_{\bar{X}_h}\norm[0]{\bar{y}_h}_{\bar{X}_h}}
    \\
    =
    \norm[0]{(S_P^{-1}S_A)^{-1}}_{\mathcal{L}(\bar{X}_h,\bar{X}_h)}^{-1}.
  \end{multline}
  \textbf{Step 2.} We now prove that \cref{eq:spectral-equivalence}
  implies \cref{eq:liftingcondition}. Suppose that
  \cref{eq:spectral-equivalence} and all the assumptions on $A$ hold
  but \cref{eq:liftingcondition} fails. Then, there exist sequences
  $\cbr[0]{ \boldsymbol{X}_{h_k} }_{k\ge 1}$,
  $\cbr[0]{ 0 \not = \bar{x}_{h_k} \in \bar{X}_{h_k} }_{k\ge 1}$,
  $\cbr[0]{C_k >0 }_{k\ge 1}$ such that
  $\lim_{k \to \infty} C_k = +\infty$, and
  \begin{equation*}
    \norm[0]{ (-A_{11}^{-1} A_{21}^T \bar{x}_{h_k}, \bar{x}_{h_k}) }_{\boldsymbol{X}_{h_k}} = C_k \norm[0]{ \bar{x}_{h_k} }_{\bar{X}_{h_k}}.
  \end{equation*}
  Choosing
  $\boldsymbol{x}_h = (-A_{11}^{-1}A_{21}^T \bar{x}_{h_k},
  \bar{x}_{h_k})$ in the first line of \cref{eq:ciSASP} and using
  \cref{eq:Sp-minimal},
  \begin{align*}
    c_i &\le \sup_{(v_{h_k}, \bar{y}_{h_k}) \in X_{h_k} \times \bar{X}_{h_k}}
          \frac{\langle S_A\bar{x}_{h_k}, \bar{y}_{h_k} \rangle_{\bar{X}_{h_k}^{\ast},\bar{X}_{h_k}}}
          {\norm[0]{(-A_{11}^{-1}A_{21}^T\bar{x}_{h_k}, \bar{x}_{h_k})}_{\boldsymbol{X}_{h_k}} \norm[0]{(v_{h_k}, \bar{y}_{h_k} )}_{\boldsymbol{X}_{h_k}} } 
    \\
        &\le \sup_{\bar{y}_{h_k} \in \bar{X}_{h_k}} \frac{\langle S_A\bar{x}_{h_k}, \bar{y}_{h_k} \rangle_{\bar{X}_{h_k}^{\ast},\bar{X}_{h_k}}}
          {C_k\norm[0]{\bar{x}_{h_k}}_{\bar{X}_{h_k}}\norm[0]{\bar{y}_{h_k}}_{\bar{X}_{h_k}}} 
        = \sup_{\bar{y}_{h_k} \in \bar{X}_{h_k}} \frac{( S_P^{-1} S_A\bar{x}_{h_k}, \bar{y}_{h_k} )_{\bar{X}_{h_k}}}
          {C_k\norm[0]{\bar{x}_{h_k}}_{\bar{X}_{h_k}}\norm[0]{\bar{y}_{h_k}}_{\bar{X}_{h_k}}}.
  \end{align*}
  Therefore,
  $c_i C_k \le \norm[0]{ S_P^{-1} S_A }_{\mathcal{L}(\bar{X}_{h_k},
    \bar{X}_{h_k})}$, which contradicts the assumption that
  $\norm[0]{ S_P^{-1} S_A }_{\mathcal{L}(\bar{X}_{h_k},
    \bar{X}_{h_k})}$ is uniformly bounded because
  $\lim_{k \to \infty} C_k = +\infty$.
\end{proof}

\begin{remark}
  \label{rem:SPhinnerprod}
  A similar result to \cref{thm:SpparamrobSa} also holds when $P$ is
  replaced by $\widehat{P}$, which induces an equivalent norm on
  $\boldsymbol{X}_h$ and satisfies the assumptions of $P$ in
  \cref{thm:SpparamrobSa}. The proof follows by taking the
  $\widehat{P}$-associated norm on $\boldsymbol{X}_h$.
\end{remark}

\section{Application to the Darcy and Stokes problems}
\label{s:application}
We determine a parameter-robust preconditioner for the reduced problem
of a hybridizable discretization proceeds in two steps. First, we
determine a parameter-robust preconditioner $P^{-1}$ for the full
problem by finding a norm $\tnorm{\cdot}_{\boldsymbol{X}_h}$ such that
\cref{eq:wpconditions} holds (see \cref{rem:Phparamrobust}). Second,
we use \cref{thm:SpparamrobSa} to show that $S_P^{-1}$ is a
parameter-robust preconditioner for the reduced problem by proving
\cref{eq:liftingcondition}. In this section, we demonstrate this
approach for hybridizable discretizations of the Darcy and Stokes
problems.

\subsection{Preliminaries}
\label{ss:prelim}
Let $\Omega \subset \mathbb{R}^d$ be a bounded polygonal domain with
boundary $\Gamma$. The unit outward normal to the boundary is denoted
by $n$. By $\mathcal{T}_h := \cbr[0]{K}$ we denote a family of
simplicial triangulations of the domain $\Omega$. The diameter of a
cell $K \in \mathcal{T}_h$ is denoted by $h_K$ while
$h := \max_{K \in \mathcal{T}_h}h_K$. We will assume that the mesh
consists of shape-regular cells. The boundary of a cell $K$ is denoted
by $\partial K$ and the unit outward normal vector to $\partial K$ is
denoted by $n_K$. However, we will drop the subscript $K$ where
confusion cannot occur. The set and union of faces are denoted by
$\mathcal{F}_h$ and $\Gamma^0$, respectively.

Let $D \subset \mathbb{R}^d$ be a Lipschitz domain. The Lebesgue space
of square integrable functions is denoted by $L^2(D)$ while the
$L^2$-inner product and norm on $D$ are denoted by $(\cdot, \cdot)_D$
and $\norm[0]{\cdot}_D$, respectively. The $L^2$-inner product over a
surface $S \subset \mathbb{R}^{d-1}$ is denoted by
$\langle \cdot, \cdot \rangle_S$.

We will require the following inner products:
$(\phi, \psi)_{\mathcal{T}_h} := \sum_{K\in\mathcal{T}_h}(\phi,
\psi)_K$ and
$\langle \phi, \psi \rangle_{\partial\mathcal{T}_h} :=
\sum_{K\in\mathcal{T}_h} \langle \phi, \psi \rangle_{\partial K}$ if
$\phi, \psi$ are scalar, and
$(\phi, \psi)_{\mathcal{T}_h} := \sum_{i=1}^d(\phi_i,
\psi_i)_{\Omega}$ and
$\langle \phi, \psi \rangle_{\partial \mathcal{T}_h} := \sum_{i=1}^d
\langle \phi_i, \psi_i\rangle_{\partial \mathcal{T}_h}$ if
$\phi, \psi$ are vector-valued. The norms induced by these inner
products are denoted by $\norm[0]{\cdot}_{\mathcal{T}_h}$ and
$\norm[0]{\cdot}_{\partial\mathcal{T}_h}$, respectively. These $L^2$
cell- and face-based inner products are not to be confused with the
inner product on function spaces $(\cdot, \cdot)_{\boldsymbol{X}_h}$
and duality pairing
$\langle \cdot, \cdot
\rangle_{\boldsymbol{X}_h^{\ast},\boldsymbol{X}_h}$.

We require the following
finite element function spaces:
\begin{align*}
  V_h
  &:= \cbr[1]{ v_h \in [L^2(\Omega)]^d\ :\ v_h \in [\mathbb{P}_k(K)]^d,\ \forall  K \in \mathcal{T}_h },
  \\
  \bar{V}_h
  &:= \cbr[1]{ \bar{v}_h \in [L^2(\Gamma^0)]^d\ :\ \bar{v}_h \in [\mathbb{P}_k(F)]^d,\ \forall F \in \mathcal{F}_h,\
    \bar{v}_h = 0 \text{ on } \Gamma},
  \\
  Q_h
  &:= \cbr[1]{ q_h \in L^2(\Omega)\ :\ q_h \in \mathbb{P}_{k-1}(K),\ \forall K \in \mathcal{T}_h },
  \\
  \bar{Q}_h
  &:= \cbr[1]{ \bar{q}_h \in L^2(\Gamma^0)\ :\ \bar{q}_h \in \mathbb{P}_{k}(F), \ \forall F \in \mathcal{F}_h },
  \\
  \bar{Q}_h^0
  &:= \cbr[1]{ \bar{\psi}_h \in \bar{Q}_h\ :\ \bar{\psi}_h = 0 \text{ on } \Gamma},
\end{align*}
where $\mathbb{P}_r(K)$ and $\mathbb{P}_r(F)$ denote the sets of
polynomials of degree at most $r$ on a cell $K$ and face $F$,
respectively. Cell and face function pairs will be denoted by
boldface, e.g.,
$\boldsymbol{v}_h = (v_h, \bar{v}_h) \in \boldsymbol{V}_h = V_h \times
\bar{V}_h$. Likewise, $\boldsymbol{Q}_h = Q_h \times \bar{Q}_h$ and
$\boldsymbol{Q}_h^0 = Q_h \times \bar{Q}_h^0$.

The following mesh-dependent norms are defined on $\boldsymbol{V}_h$,
$\boldsymbol{Q}_h$, $\boldsymbol{Q}_h^0$, $\bar{V}_h$, and
$\bar{Q}_h$:
\begin{subequations}
  \label{eq:norms-hdg}
  \begin{align}
    \label{eq:norm-v}
    \tnorm{\boldsymbol{v}_h}_v^2
    & := \norm[0]{\varepsilon(v_h)}^2_{\mathcal{T}_h} + \eta \norm[0]{h_K^{-1/2}(v_h - \bar{v}_h)}^2_{\partial \mathcal{T}_h}
    && \forall \boldsymbol{v}_h \in \boldsymbol{V}_h,
    \\
    \label{eq:norm-pT}
    \tnorm{\boldsymbol{q}_h}_{0, p}^2
    &:= \norm[0]{q_h}_{\mathcal{T}_h}^2 + \norm[0]{h_K^{1/2}\bar{q}_h}^2_{\partial \mathcal{T}_h}
    && \forall \boldsymbol{q}_h \in \boldsymbol{Q}_h,
    \\
    \label{eq:norm-p}
    \tnorm{\boldsymbol{q}_h}_p^2
    &:= \norm[0]{\nabla q_h}_{\mathcal{T}_h}^2 + \eta \norm[0]{h_K^{-1/2}(q_h - \bar{q}_h)}_{\partial \mathcal{T}_h}^2
    && \forall \boldsymbol{q}_h \in \boldsymbol{Q}_h^0,
    \\
    \label{eq:h-u-norm}
    \tnorm{\bar{v}_h}_{h,u}^2
    &:= \norm[0]{h_K^{-1/2}(\bar{v}_h - m_K(\bar{v}_h))}^2_{\partial \mathcal{T}_h}
    && \forall \bar{v}_h \in \bar{V}_h,
    \\
    \label{eq:h-p-norm}
    \tnorm{\bar{q}_h}_{h,p}^2
    &:= \norm[0]{h_K^{-1/2}(\bar{q}_h - m_K(\bar{q}_h))}^2_{\partial \mathcal{T}_h}
    && \forall \bar{q}_h \in \bar{Q}_h,
  \end{align}
\end{subequations}
where $\varepsilon(u) := (\nabla u + (\nabla u)^T)/2$ is the symmetric
gradient, $\eta > 0$ is a penalty parameter that will be specified
later, and where
$m_K(\bar{v}_h) := |\partial K|^{-1} \int_{\partial K} \bar{v}_h \dif
s$.

\subsection{The Darcy problem}
\label{ss:darcy}

The Darcy problem with reactive term is given by:
\begin{equation}
\label{eq:darcy-eq}
    \xi^{-1} u + \nabla p = 0
    \text{ in } \Omega,
    \quad \nabla\cdot u + \gamma p = f
    \text{ in } \Omega,   
    \quad
    p = 0 \text{ on } \Gamma,
\end{equation}
where $u$ denotes the velocity, $p$ the pressure, $f \in L^2(\Omega)$
a source term, $\xi > 0$ a constant diffusion parameter, and
$\gamma > 0$ a constant reaction parameter. In the notation of
\cref{ss:afterHyb}, let $X_h := V_h \times Q_h$,
$\bar{X}_h := \bar{Q}_h^0$, and
$\boldsymbol{X}_h := X_h \times \bar{X}_h$. The hybrid BDM
discretization of \cref{eq:darcy-eq} is given by
\cite{arnold1985mixed}: Given $f \in L^2(\Omega)$, find
$(u_h,\boldsymbol{p}_h) \in \boldsymbol{X}_h$ such that
\begin{equation}
  \label{eq:hdg2-scheme-darcy}
  a_h((u_h, \boldsymbol{p}_h), (v_h, \boldsymbol{q}_h))
  = (f, q_h)_{\mathcal{T}_h}
  \qquad \forall (v_h,\boldsymbol{q}_h) \in \boldsymbol{X}_h,
\end{equation}
where
\begin{subequations}
  \begin{align}
    \label{eq:Ddefah}
    a_h((u_h, \boldsymbol{p}_h), (v_h, \boldsymbol{q}_h))
    &:= \xi^{-1} (u_h, v_h)_{\mathcal{T}_h} + b_h(v_h, \boldsymbol{p}_{h}) - b_h(u_h, \boldsymbol{q}_h)
      + \gamma (p_h, q_h)_{\mathcal{T}_h},
    \\
    \label{eq:Ddefbh}
    b_h(v_h, \boldsymbol{q}_{h})
    &:= - (q_{h}, \nabla \cdot v_h)_{\mathcal{T}_h}
    + \langle \bar{q}_{h}, v_h \cdot n \rangle_{\partial \mathcal{T}_h}.
  \end{align}  
\end{subequations}
Consider the following inner products on $V_h$, $\boldsymbol{Q}_h^0$,
and $\boldsymbol{X}_h$:
\begin{subequations}
\begin{align}
    &(u_h,v_h)_{v,D}
    := \xi^{-1}(u_h,v_h)_{\mathcal{T}_h},
    \\
    &(\boldsymbol{p}_h,\boldsymbol{q}_h)_{q,D}
    :=\gamma (p_h,q_h)_{\mathcal{T}_h}
    + \xi (\nabla p_h,\nabla q_h)_{\mathcal{T}_h}
    + \xi \eta \langle h_K^{-1} (p_h-\bar{p}_h), q_h - \bar{q}_h\rangle_{\partial \mathcal{T}_h},
    \\
    \label{eq:darcyInnerProd}
    &( (u_h,p_h,\bar{p}_h), (v_h,q_h,\bar{q}_h) )_{\boldsymbol{X}_h}
    := (u_h,v_h)_{v,D} + (\boldsymbol{p}_h,\boldsymbol{q}_h)_{q,D},    
\end{align}
\end{subequations}
and their induced norms $\tnorm{v_h}_{v,D}$,
$\tnorm{\boldsymbol{q}_h}_{q,D}$, and
$\tnorm{(v_h,\boldsymbol{q}_h)}_{\boldsymbol{X}_h}$. In
\cref{sss:darcyStep1} we will show that the preconditioner $P^{-1}$
defined through the inner product \cref{eq:darcyInnerProd}, see
\cref{eq:defOpP}, is parameter-robust for the Darcy discretization
\cref{eq:hdg2-scheme-darcy}. In \cref{sss:darcyStep2} we then show
that the preconditioner $S_P^{-1}$, as defined by \cref{eq:defSCSP},
is parameter-robust for the reduced problem of the Darcy
discretization \cref{eq:hdg2-scheme-darcy} after eliminating $u_h$ and
$p_h$.

\subsubsection{The Darcy problem: preconditioner $P^{-1}$}
\label{sss:darcyStep1}

The following lemma shows that \cref{eq:wpconditions} is valid with
constants $c_b$ and $c_i$ independent of $h$, $\xi$, and $\gamma$.

\begin{lemma}
  \label{lem:cbciDarcy}
  For $a_h(\cdot, \cdot)$ in \cref{eq:Ddefah} and the norm
  $\tnorm{\cdot}_{\boldsymbol{X}_h}$ on $\boldsymbol{X}_h$ induced by
  the inner product defined in \cref{eq:darcyInnerProd},
  \cref{eq:wpconditions} holds with constants $c_b$ and $c_i$
  independent of $h$, $\xi$, and $\gamma$.
\end{lemma}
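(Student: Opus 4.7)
My plan is to verify the boundedness \cref{eq:wpconditions_b} and inf-sup \cref{eq:wpconditions_i} conditions directly in the weighted norm $\tnorm{\cdot}_{\boldsymbol{X}_h}$, after first rewriting $b_h$ via a cell-wise integration by parts,
\[
b_h(v_h,\boldsymbol{q}_h) = (v_h,\nabla q_h)_{\mathcal{T}_h} - \langle q_h - \bar{q}_h, v_h\cdot n\rangle_{\partial\mathcal{T}_h},
\]
so that every term of $a_h$ pairs naturally with a component of the inner product in \cref{eq:darcyInnerProd}. Boundedness then follows term-by-term from Cauchy--Schwarz with $\xi$-, $\gamma$-, and $\eta$-balanced weights: $\xi^{-1}(u_h,v_h)_{\mathcal{T}_h}$ and $\gamma(p_h,q_h)_{\mathcal{T}_h}$ are immediate; $(v_h,\nabla q_h)_{\mathcal{T}_h}$ is controlled by $\xi^{-1/2}\|v_h\|_{\mathcal{T}_h}\cdot\xi^{1/2}\|\nabla q_h\|_{\mathcal{T}_h}$; and the jump term is bounded by pairing $(\xi\eta)^{1/2}\|h_K^{-1/2}(q_h-\bar{q}_h)\|_{\partial\mathcal{T}_h}$ with $(\xi\eta)^{-1/2}\|h_K^{1/2}v_h\|_{\partial\mathcal{T}_h}$ and invoking the discrete trace inequality $\|h_K^{1/2}v_h\|_{\partial\mathcal{T}_h}\lesssim\|v_h\|_{\mathcal{T}_h}$. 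All the resulting constants are independent of $h$, $\xi$, and $\gamma$.

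\textbf{Inf-sup via a Fortin-type construction.} Testing $a_h$ against $(u_h,\boldsymbol{p}_h)$ cancels the $b_h$ contributions and returns exactly $\xi^{-1}\|u_h\|^2_{\mathcal{T}_h} + \gamma\|p_h\|^2_{\mathcal{T}_h}$, which controls the full $\tnorm{u_h}_{v,D}^2$ and the mass portion of $\tnorm{\boldsymbol{p}_h}_{q,D}^2$. To recover the remaining $\xi\|\nabla p_h\|^2_{\mathcal{T}_h} + \xi\eta\|h_K^{-1/2}(p_h-\bar{p}_h)\|^2_{\partial\mathcal{T}_h}$, I would construct an auxiliary $v_h^\ast\in V_h$ cell-locally such that its normal trace on each face $F\subset\partial K$ equals $\xi\eta\,h_K^{-1}(p_h-\bar{p}_h)$ (which lies in $\mathbb{P}_k(F)$ and is therefore admissible as a normal trace of $[\mathbb{P}_k(K)]^d$), while its interior moments against $[\mathbb{P}_{k-2}(K)]^d$ match those of $-\xi\nabla p_h$, with any remaining freedoms set to zero. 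A standard scaling/inverse argument on the reference simplex then gives
\[
\xi^{-1}\|v_h^\ast\|^2_{\mathcal{T}_h} \lesssim \xi\|\nabla p_h\|^2_{\mathcal{T}_h} + \xi\eta^2\|h_K^{-1/2}(p_h-\bar{p}_h)\|^2_{\partial\mathcal{T}_h},
\]
while a direct calculation using the integration-by-parts form of $b_h$ yields $-b_h(v_h^\ast,\boldsymbol{p}_h) = \xi\|\nabla p_h\|^2_{\mathcal{T}_h} + \xi\eta\|h_K^{-1/2}(p_h-\bar{p}_h)\|^2_{\partial\mathcal{T}_h}$. Testing $a_h$ with the combined function $(u_h+\delta v_h^\ast,\boldsymbol{p}_h)$ for a sufficiently small uniform $\delta>0$, and absorbing the cross term $\delta\xi^{-1}(u_h,v_h^\ast)_{\mathcal{T}_h}$ via weighted Young's inequality, delivers \cref{eq:wpconditions_i} with a constant depending only on $\eta$ and $\delta$.

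\textbf{Main obstacle.} The most delicate point will be the cell-local construction of $v_h^\ast$ and the bookkeeping of the $\xi$-, $\eta$-, and $h_K$-scalings through the inverse and trace estimates and through Young's inequality. The discontinuous choice $V_h=[\mathbb{P}_k(K)]^d$, together with the availability of BDM-compatible degrees of freedom on each cell, makes the construction possible; however, verifying that every constant is genuinely independent of $\xi$, $\gamma$, and $h$, and that $\eta$ can be fixed as a uniform constant, requires careful tracking. Once that is in place, the remainder reduces to routine applications of Cauchy--Schwarz, Young's, and discrete trace/inverse inequalities.
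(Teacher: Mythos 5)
Your proposal follows the same overall skeleton as the paper's proof: boundedness is obtained term-by-term after rewriting $b_h$ via cell-wise integration by parts and balancing the $\xi$- and $\eta$-weights exactly as in \cref{ss:Proof_cbciDarcy}, and the inf-sup condition \cref{eq:wpconditions_i} is obtained by testing with $(u_h+\delta\,\xi\tilde u_h,\boldsymbol{p}_h)$, where $\tilde u_h$ is a velocity whose $L^2$ norm is controlled by $\tnorm{\boldsymbol{p}_h}_p$ and for which $b_h(\tilde u_h,\boldsymbol{p}_h)$ returns the gradient and jump parts of the pressure norm, with the cross term absorbed by Young's inequality. The one genuine difference is that the paper takes this auxiliary velocity as a black box from the inf-sup condition \cref{eq:inf-sup-H1}, cited from \cite[Lemma~4]{kraus2021uniformly}, whereas you propose to construct it explicitly through BDM-type degrees of freedom (normal traces matching $h_K^{-1}(p_h-\bar p_h)$, interior moments matching $\nabla p_h$, plus a reference-element scaling argument). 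That construction is standard and is essentially how \cref{eq:inf-sup-H1} is proved, so your route is self-contained at the cost of re-deriving a cited ingredient; the $\eta$-dependence you pick up is harmless since $\eta$ is a fixed penalty parameter, not one of the parameters ($h$, $\xi$, $\gamma$) the lemma requires uniformity in. One small sign slip to fix: with your normalization $-b_h(v_h^\ast,\boldsymbol{p}_h)=\xi\norm[0]{\nabla p_h}_{\mathcal{T}_h}^2+\xi\eta\norm[0]{h_K^{-1/2}(p_h-\bar p_h)}_{\partial\mathcal{T}_h}^2$, the test function must be $(u_h-\delta v_h^\ast,\boldsymbol{p}_h)$ (or equivalently flip the sign in the definition of $v_h^\ast$), since in $a_h$ the relevant contribution is $+\delta\, b_h(v_h^\ast,\boldsymbol{p}_h)$, which would otherwise enter with the wrong sign.
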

\begin{proof}
  See \cref{ss:Proof_cbciDarcy}.
\end{proof}

By \cref{rem:Phparamrobust} $P^{-1}$ is a parameter-robust
preconditioner for \cref{eq:hdg2-scheme-darcy}.

\subsubsection{The Darcy problem: the reduced preconditioner $S_P^{-1}$}
\label{sss:darcyStep2}

To show \cref{eq:liftingcondition} we require some preliminary
results. The following definition defines the local solver for the
Darcy problem similar to \cite[Definition
2]{rhebergen2018preconditioning}.

\begin{definition}[Local solver for the Darcy problem]
  \label{def:locSolverDarcy}
  Let $K \in \mathcal{T}_h$, and let $V(K) := [\mathbb{P}_k(K)]^d$ and
  $Q(K) := \mathbb{P}_{k-1}(K)$ be the polynomial spaces in which the
  velocity and pressure are approximated on a cell. Given
  $\bar{t}_h \in \bar{Q}_h^0$ and $s \in L^2(\Omega)$, the functions
  $u_h^L(\bar{t}_h, s)\in V_h$ and $p_h^L(\bar{t}_h, s) \in Q_h$,
  restricted to cell $K$, satisfy the local problem
  \begin{equation}
    \label{eq3:localsolver-darcy}
    a_h^K((u_h^L, p_h^L),(v_h, q_h))
    = f_h^K((v_h, q_h)) \qquad \forall (v_h, q_h) \in V(K)\times Q(K),
  \end{equation}
  where  
  \begin{equation*}
    \begin{split}
    a_h^K((u_h, p_h), (v_h, q_h))
    &:= \xi^{-1} (u_h, v_h)_{K}
    - (p_h, \nabla \cdot v_h)_K
    + (q_h, \nabla \cdot u_h)_K
    + \gamma (p_h, q_h)_K,
    \\
    f_h^K((v_h, q_h))
    &:= (s, q_h)_K
    - \langle \bar{t}_h, v_h\cdot n \rangle_{\partial K}.
    \end{split}    
  \end{equation*}
\end{definition}

The following lemma uses the local solver to eliminate $u_h$ and $p_h$
from \cref{eq:hdg2-scheme-darcy}. Since the steps of the proof are
identical to those of \cite[Lemma 4]{rhebergen2018preconditioning}, we
will omit the proof.

\begin{lemma}
  \label{lem:weak-reduced-darcy}
  Let $f \in L^2(\Omega)$ be given and let $u_h^f := u_h^L(0,f)$ and
  $p_h^f := p_h^L(0,f)$. Furthermore, define
  $l_u(\bar{q}_h) := u_h^L(\bar{q}_h,0)$ and
  $l_p(\bar{q}_h) := p_h^L(\bar{q}_h,0)$ for all
  $\bar{q}_h \in \bar{Q}_h^0$. Let $\bar{p}_h \in \bar{Q}_h^0$ be the
  solution to
  \begin{equation}
    \label{eq:weak-reduced-darcy-2}
    \bar{a}_h(\bar{p}_h, \bar{q}_h)
    = \bar{f}_h(\bar{q}_h)
    \qquad \forall \bar{q}_h \in \bar{Q}_h^0,
  \end{equation}
  where $\bar{f}_h(\bar{q}_h) := (f, l_p(\bar{q}_h))_{\mathcal{T}_h}$, and where
  \begin{equation*}
    \bar{a}_h(\bar{p}_h, \bar{q}_h) = a_h((l_u(\bar{p}_h), l_p(\bar{p}_h), \bar{p}_h), (l_u(\bar{q}_h), l_p(\bar{q}_h), \bar{q}_h).
  \end{equation*}
  Then $(u_h,p_h,\bar{p}_h)$, where $u_h = u_h^f + l_u(\bar{p}_h)$ and
  $p_h = p_h^f + l_p(\bar{p}_h)$, solves \cref{eq:hdg2-scheme-darcy}.
\end{lemma}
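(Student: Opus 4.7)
The plan is to verify the ansatz directly by testing \cref{eq:hdg2-scheme-darcy} against arbitrary $(v_h, \boldsymbol{q}_h) \in \boldsymbol{X}_h$. Since the cell spaces $V_h, Q_h$ are fully discontinuous, any test function splits into a cell-local part $(v_h, q_h, 0)$ and a skeleton part $(0, 0, \bar{q}_h)$, and I would treat these two classes separately.

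For cell-local tests, I would expand $a_h$ using the definition of $b_h$ and move the boundary term $\langle \bar{p}_h, v_h \cdot n\rangle_{\partial \mathcal{T}_h}$ to the right-hand side. The resulting cell-wise equation on each $K$ is precisely the local problem \cref{eq3:localsolver-darcy} with source $s = f$ and boundary data $\bar{t}_h = \bar{p}_h$. Linearity of the local solver in $(\bar{t}_h, s)$ then gives that its unique solution is $(u_h^f + l_u(\bar{p}_h), p_h^f + l_p(\bar{p}_h))$, which is exactly the ansatz, so the test is satisfied. For skeleton tests, the equation reduces to a relation purely on the skeleton involving only $u_h \cdot n$. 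Substituting the ansatz for $u_h$ and rewriting the two resulting normal-trace expressions by invoking the local problems satisfied by $(u_h^f, p_h^f)$ and by the liftings, one identifies the $l_u(\bar{p}_h)$ contribution with $\bar{a}_h(\bar{p}_h, \bar{q}_h)$ and the $u_h^f$ contribution with $\bar{f}_h(\bar{q}_h)$, reproducing \cref{eq:weak-reduced-darcy-2}.

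The main obstacle is this final rewriting step. The key identity is the symmetry $\langle \bar{s}_h, l_u(\bar{r}_h) \cdot n\rangle_{\partial \mathcal{T}_h} = \langle \bar{r}_h, l_u(\bar{s}_h) \cdot n\rangle_{\partial \mathcal{T}_h}$ for $\bar{s}_h, \bar{r}_h \in \bar{Q}_h^0$, obtained by testing each lifting's local problem against the other and using the symmetry of $a_h^K$. The analogous exchange applied to $(u_h^f, p_h^f)$ and $(l_u(\bar{q}_h), l_p(\bar{q}_h))$ converts $\langle \bar{q}_h, u_h^f \cdot n\rangle_{\partial \mathcal{T}_h}$ into an expression proportional to $(f, l_p(\bar{q}_h))_{\mathcal{T}_h} = \bar{f}_h(\bar{q}_h)$. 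All remaining steps are linear substitutions and cell-wise bookkeeping. Well-posedness of the local solver on each $K$, needed for the very definitions of $u_h^f, p_h^f, l_u, l_p$, follows from the classical BDM local inf-sup condition together with the coercive contribution $\gamma(p_h, q_h)_K$ from the reaction term.
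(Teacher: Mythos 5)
Your argument is correct and is essentially the standard static-condensation proof; the paper itself omits the argument and defers to \cite[Lemma 4]{rhebergen2018preconditioning}, but the route you describe -- splitting tests into cell-local pairs $(v_h,q_h,0)$, which reproduce the local problem with $s=f$ and $\bar t_h = \bar p_h$, and skeleton tests $(0,0,\bar q_h)$, which reduce to $\langle \bar q_h, u_h\cdot n\rangle_{\partial\mathcal{T}_h}=0$ and are handled by the reciprocity identities $\bar a_h(\bar p_h,\bar q_h) = -\langle \bar q_h, l_u(\bar p_h)\cdot n\rangle_{\partial\mathcal{T}_h}$ and $\langle \bar q_h, u_h^f\cdot n\rangle_{\partial\mathcal{T}_h} = (f, l_p(\bar q_h))_{\mathcal{T}_h}$ -- is exactly the intended one. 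One minor imprecision: $a_h^K$ as defined is \emph{not} symmetric (the two divergence terms carry opposite signs), so your key exchange identity $\langle \bar s_h, l_u(\bar r_h)\cdot n\rangle_{\partial\mathcal{T}_h} = \langle \bar r_h, l_u(\bar s_h)\cdot n\rangle_{\partial\mathcal{T}_h}$ follows not from symmetry of $a_h^K$ but from the twisted identity $a_h^K((u,p),(v,q)) = a_h^K((v,-q),(u,-p))$, i.e.\ each local problem must be tested against the other solution's velocity together with the \emph{negative} of its pressure.
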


\begin{remark}
  \Cref{eq:weak-reduced-darcy-2} is the reduced form of
  \cref{eq:hdg2-scheme-darcy} in a variational setting. In operator
  form, it corresponds to \cref{eq:SCop}.
\end{remark}

Next, we require some results related to the HDG discretization of the
following auxiliary problem:
\begin{equation*}
  -\nabla \cdot (\xi \nabla p) + \gamma p = f \text{ in } \Omega,
  \quad
  p = 0 \text{ on } \Gamma.  
\end{equation*}
The HDG discretization of this problem is given by
\cite{wells2011analysis}: Given $f \in L^2(\Omega)$, find
$\boldsymbol{p}_h \in \boldsymbol{Q}_h^0$ such that
\begin{equation}
  \label{eq:HDGaux}
  \tilde{a}_h(\boldsymbol{p}_h, \boldsymbol{q}_h) = (f, q_h)_{\mathcal{T}_h} \quad \forall \boldsymbol{q}_h \in \boldsymbol{Q}_h^0,
\end{equation}
where
\begin{multline}
  \label{eq:HDGaux_ah}
  \tilde{a}_h(\boldsymbol{p}_h, \boldsymbol{q}_h)
  := \xi (\nabla p_h, \nabla q_h)_{\mathcal{T}_h}
  + \xi \eta \langle h_K^{-1}(p_h-\bar{p}_h), q_h-\bar{q}_h \rangle_{\partial\mathcal{T}_h}
  + \gamma(p_h,q_h)_{\mathcal{T}_h}
  \\
  - \xi \langle \nabla p_n \cdot n, q_h - \bar{q}_h \rangle_{\partial\mathcal{T}_h}
  - \xi \langle \nabla q_n \cdot n, p_h - \bar{p}_h \rangle_{\partial\mathcal{T}_h}.
\end{multline}
Straightforward modifications to the proofs of \cite[Lemmas 5.2 and
5.3]{wells2011analysis} lead to:
\begin{equation}
  \label{eq:coerbndatildeD}
  C_1 \tnorm{\boldsymbol{q}_h}_{q,D}^2 \le \tilde{a}_h(\boldsymbol{q}_h,\boldsymbol{q}_h) \le C_2 \tnorm{\boldsymbol{q}_h}_{q,D}^2
  \quad \forall \boldsymbol{q}_h \in \boldsymbol{Q}_h^0.
\end{equation}
To eliminate $p_h$ from \cref{eq:HDGaux} we introduce the following
local solver.

\begin{definition}[Local solver for the auxiliary problem]
  \label{def:localsolverAuxProb}
  Let $K \in \mathcal{T}_h$, and let $Q(K) := \mathbb{P}_{k-1}(K)$ be
  the polynomial space in which the pressure is approximated on a
  cell. Given $\bar{t}_h \in \bar{Q}_h^0$ and $s \in L^2(\Omega)$, the
  function $\tilde{p}_h^L(\bar{t}_h,s) \in Q_h$, restricted to cell
  $K$, satisfies the local problem
  \begin{equation}
    \label{eq:localProbAuxD}
    \tilde{a}_h^K(\tilde{p}_h^L, q_h) = \tilde{f}_h^K(q_h) \quad \forall q_h \in Q(K),
  \end{equation}
  where
  \begin{align*}
    \tilde{a}_h^K(p_h,q_h)
    :=& \xi (\nabla p_h, \nabla q_h)_K + \xi \eta h_K^{-1} \langle p_h, q_h \rangle_{\partial K} + \gamma (p_h,q_h)_K
    \\
      &- \xi \langle \nabla p_h\cdot n, q_h \rangle_{\partial K} - \xi \langle \nabla q_h \cdot n, p_h \rangle_{\partial K},
    \\
    \tilde{f}_h^K(q_h)
    :=& (s,q_h)_K
        + \xi \eta h_K^{-1} \langle q_h, \bar{t}_h \rangle_{\partial K}
        - \xi \langle \nabla q_h \cdot n, \bar{t}_h \rangle_{\partial K}.
  \end{align*}
\end{definition}

The following lemma now eliminates $p_h$ from \cref{eq:HDGaux}. We
again omit the proof because the steps are identical to those of
\cite[Lemma 4]{rhebergen2018preconditioning}.

\begin{lemma}
  \label{lem:weak-reduced-darcy-aux}
  Let $f \in L^2(\Omega)$ be given, and let 
  $\tilde{p}_h^f := \tilde{p}_h^L(0,f)$. Furthermore, define
  $\tilde{l}_p(\bar{q}_h) := \tilde{p}_h^L(\bar{q}_h,0)$ for all
  $\bar{q}_h \in \bar{Q}_h^0$. Let $\bar{p}_h \in \bar{Q}_h^0$ be the
  solution to
  \begin{equation}
    \label{eq:weak-reduced-darcy-2-aux}
    \tilde{a}_h( (\tilde{l}_p(\bar{p}_h), \bar{p}_h), (\tilde{l}_p(\bar{q}_h), \bar{q}_h) )
    = (f, \tilde{l}_p(\bar{q}_h))
    \qquad \forall \bar{q}_h \in \bar{Q}_h^0.
  \end{equation}
  Then $(p_h,\bar{p}_h)$, where
  $p_h = \tilde{p}_h^f + \tilde{l}_p(\bar{p}_h)$, solves
  \cref{eq:HDGaux}.
\end{lemma}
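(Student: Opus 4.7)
The plan is to verify that setting $p_h := \tilde{p}_h^f + \tilde{l}_p(\bar{p}_h)$, where $\bar{p}_h$ solves the reduced problem \cref{eq:weak-reduced-darcy-2-aux}, produces a pair $(p_h, \bar{p}_h) \in \boldsymbol{Q}_h^0$ that satisfies \cref{eq:HDGaux} against every test function $(q_h, \bar{q}_h) \in \boldsymbol{Q}_h^0$. The key device is the decomposition
\[
(q_h, \bar{q}_h) = (q_h - \tilde{l}_p(\bar{q}_h), 0) + (\tilde{l}_p(\bar{q}_h), \bar{q}_h),
\]
which splits any test pair into a purely interior piece (with zero facet trace) and a piece whose interior part is pinned down by the lifting of its facet trace. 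Each summand is then treated with a different ingredient: the first with the defining property of the local solver, the second with the reduced equation.

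The first ingredient I would establish is a cell-local identity: for every $K \in \mathcal{T}_h$ and every $r_h \in Q(K)$,
\[
\tilde{a}_h\bigl((p_h, \bar{p}_h), (r_h, 0)\bigr)\big|_K = (f, r_h)_K.
\]
To see this, I would expand $\tilde{a}_h$ on $K$ directly from \cref{eq:HDGaux_ah}, isolate the symmetric group of terms that equals $\tilde{a}_h^K(p_h, r_h)$, and collect the leftover boundary terms involving $\bar{p}_h$. By linearity of the local solver, $p_h = \tilde{p}_h^L(\bar{p}_h, f)$, so invoking \cref{def:localsolverAuxProb} with $s = f$ and $\bar{t}_h = \bar{p}_h$ rewrites $\tilde{a}_h^K(p_h, r_h)$ in terms of $(f, r_h)_K$ and boundary data in $\bar{p}_h$ that cancel exactly with the leftover terms. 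Summing over cells and taking $r_h = q_h - \tilde{l}_p(\bar{q}_h)$ handles the first piece of the decomposition, producing the contribution $(f, q_h - \tilde{l}_p(\bar{q}_h))_{\mathcal{T}_h}$.

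For the second piece, I would split
\[
\tilde{a}_h\bigl((p_h, \bar{p}_h), (\tilde{l}_p(\bar{q}_h), \bar{q}_h)\bigr) = \tilde{a}_h\bigl((\tilde{l}_p(\bar{p}_h), \bar{p}_h), (\tilde{l}_p(\bar{q}_h), \bar{q}_h)\bigr) + \tilde{a}_h\bigl((\tilde{p}_h^f, 0), (\tilde{l}_p(\bar{q}_h), \bar{q}_h)\bigr).
\]
The first summand equals $(f, \tilde{l}_p(\bar{q}_h))_{\mathcal{T}_h}$ by \cref{eq:weak-reduced-darcy-2-aux}. For the second, I would exploit the symmetry of $\tilde{a}_h$ (immediate from \cref{eq:HDGaux_ah}) to rewrite it as $\tilde{a}_h((\tilde{l}_p(\bar{q}_h), \bar{q}_h), (\tilde{p}_h^f, 0))$ and then reapply the cell-local identity, now with $s = 0$ and $\bar{t}_h = \bar{q}_h$ so that $\tilde{p}_h^L = \tilde{l}_p(\bar{q}_h)$, yielding $0$. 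Adding both contributions gives $\tilde{a}_h((p_h, \bar{p}_h), (q_h, \bar{q}_h)) = (f, q_h)_{\mathcal{T}_h}$.

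I expect the main obstacle to be the cell-local identity: $\tilde{a}_h$ combines symmetric interior-penalty, consistency, and adjoint-consistency terms, so one must carefully account for which boundary contributions assemble into $\tilde{a}_h^K$ and which match the data side of \cref{eq:localProbAuxD}. The calculation is routine but bookkeeping-intensive; once it is established, the rest of the proof is immediate from linearity, symmetry, and the reduced equation.
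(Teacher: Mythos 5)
Your proof is correct and takes the same route as the paper, which omits the argument by deferring to the static-condensation proof of Lemma~4 of \cite{rhebergen2018preconditioning}: the decomposition of the test pair into $(q_h-\tilde{l}_p(\bar{q}_h),0)+(\tilde{l}_p(\bar{q}_h),\bar{q}_h)$, the cell-local identity, symmetry of $\tilde{a}_h$, and the reduced equation are exactly the ingredients of that standard argument. The cell-local identity does check out: writing $\tilde{a}_h((p_h,\bar{p}_h),(r_h,0))|_K=\tilde{a}_h^K(p_h,r_h)-\xi\eta h_K^{-1}\langle\bar{p}_h,r_h\rangle_{\partial K}+\xi\langle\nabla r_h\cdot n,\bar{p}_h\rangle_{\partial K}$ and substituting $\tilde{a}_h^K(p_h,r_h)=\tilde{f}_h^K(r_h)$ with $\bar{t}_h=\bar{p}_h$, $s=f$ cancels the leftover boundary terms exactly as you anticipate.
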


We now have the following lemma that relates the reduced form of
\cref{eq:HDGaux_ah} to the norm $\tnorm{\cdot}_{\boldsymbol{X}_h}$.

\begin{lemma}
  \label{lem:mainLemmaStep2Darcy}
  For any $\bar{q}_h \in \bar{Q}_h^0$ there exists a positive uniform
  constant $c_d$ such that
  \begin{equation}
    \label{eq:mainEqStep2Darcy}
    \tnorm{(l_u(\bar{q}_h), l_p(\bar{q}_h), \bar{q}_h)}_{\boldsymbol{X}_h}^2
    \le
    c_d \tilde{a}_h ( (\tilde{l}_p(\bar{q}_h), \bar{q}_h), (\tilde{l}_p(\bar{q}_h), \bar{q}_h) ).
  \end{equation}
\end{lemma}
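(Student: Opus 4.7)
The goal is to bound each of the four contributions to $\tnorm{(l_u(\bar q_h),l_p(\bar q_h),\bar q_h)}^2_{\boldsymbol X_h}$ by the HDG diffusion energy $\tilde a_h((\tilde l_p(\bar q_h),\bar q_h),(\tilde l_p(\bar q_h),\bar q_h))$, which by \cref{eq:coerbndatildeD} is equivalent to $\tnorm{(\tilde l_p(\bar q_h),\bar q_h)}^2_{q,D}$. We abbreviate $l_u, l_p, \tilde l_p$ for the three liftings of $\bar q_h$. The argument splits into two parts: first, the $L^2$-type contributions $\xi^{-1}\|l_u\|^2 + \gamma\|l_p\|^2$ are handled via the Darcy energy identity; then the broken $H^1$/jump contributions $\xi\|\nabla l_p\|^2 + \xi\eta\|h_K^{-1/2}(l_p-\bar q_h)\|^2_{\partial\mathcal T_h}$ are treated using the local constitutive relation between $l_u$ and $\nabla l_p$.

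\textbf{Step 1.} Testing \cref{eq3:localsolver-darcy} with $(v_h,q_h)=(l_u,l_p)$ on each $K$ and using $\nabla\cdot l_u = -\gamma l_p$ (from the second local equation) yields
\[
\xi^{-1}\|l_u\|^2_{\mathcal T_h} + \gamma\|l_p\|^2_{\mathcal T_h} = -\langle \bar q_h, l_u\cdot n\rangle_{\partial\mathcal T_h}.
\]
Write $\bar q_h = \tilde l_p + (\bar q_h - \tilde l_p)$: the single-valued $\tilde l_p$ contribution is recast by cellwise integration by parts as $-(\nabla \tilde l_p, l_u)_{\mathcal T_h} + \gamma(\tilde l_p, l_p)_{\mathcal T_h}$, while the residual $|\langle \bar q_h - \tilde l_p, l_u\cdot n\rangle_{\partial\mathcal T_h}|$ is controlled by $\|h_K^{-1/2}(\bar q_h - \tilde l_p)\|_{\partial\mathcal T_h}\|l_u\|_{\mathcal T_h}$ via a discrete trace inequality. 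Cauchy--Schwarz and Young's inequality then allow absorption of $\xi^{-1}\|l_u\|^2$ and $\gamma\|l_p\|^2$, leaving
\[
\xi^{-1}\|l_u\|^2_{\mathcal T_h} + \gamma\|l_p\|^2_{\mathcal T_h} \lesssim \xi\|\nabla \tilde l_p\|^2_{\mathcal T_h} + \gamma\|\tilde l_p\|^2_{\mathcal T_h} + \xi\|h_K^{-1/2}(\bar q_h - \tilde l_p)\|^2_{\partial\mathcal T_h} \lesssim \tnorm{(\tilde l_p, \bar q_h)}^2_{q,D}.
\]

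\textbf{Step 2.} Integrating by parts in the first local Darcy equation yields the cellwise relation
\[
\xi^{-1}(l_u, v)_K + (\nabla l_p, v)_K = \langle l_p - \bar q_h, v\cdot n\rangle_{\partial K} \qquad \forall v\in V(K).
\]
Two ingredients are used: (a) since $\nabla l_p\in[\mathbb P_{k-2}(K)]^d\subset V(K)$, testing with $v = l_u + \xi\nabla l_p$ produces $\xi^{-1}\|l_u + \xi\nabla l_p\|_K^2 \lesssim \xi h_K^{-1}\|l_p - \bar q_h\|^2_{\partial K}$; and (b) the BDM structure of $V(K)$ provides $v^\star\in V(K)$ with $v^\star\cdot n = l_p - \bar q_h$ on $\partial K$ and $\|v^\star\|_K \lesssim h_K^{1/2}\|l_p - \bar q_h\|_{\partial K}$, so testing with $v^\star$ and applying Cauchy--Schwarz gives $\xi h_K^{-1}\|l_p - \bar q_h\|^2_{\partial K} \lesssim \xi^{-1}\|l_u\|^2_K + \xi\|\nabla l_p\|^2_K$. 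Combining these with the identity $(l_u, \nabla l_p)_{\mathcal T_h} = -\xi^{-1}\|l_u\|^2_{\mathcal T_h} + \langle l_p - \bar q_h, l_u\cdot n\rangle_{\partial\mathcal T_h}$, obtained by testing the cellwise relation with $v = l_u$ and summing, closes the estimate after careful absorption:
\[
\xi\|\nabla l_p\|^2_{\mathcal T_h} + \xi\eta\|h_K^{-1/2}(l_p - \bar q_h)\|^2_{\partial\mathcal T_h} \lesssim \xi^{-1}\|l_u\|^2_{\mathcal T_h} + \gamma\|l_p\|^2_{\mathcal T_h} \lesssim \tnorm{(\tilde l_p, \bar q_h)}^2_{q,D},
\]
the final inequality coming from Step 1.

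\textbf{Main obstacle.} The chief technical difficulty lies in Step 2, where the estimates (a) and (b) are mutually circular when used naively: (a) bounds $\xi\|\nabla l_p\|^2$ in terms of the jump while (b) bounds the jump in terms of $\xi\|\nabla l_p\|^2$. Closing the system requires exploiting the \emph{two-sided} equivalence $\xi^{-1}\|l_u + \xi\nabla l_p\|^2_K \sim \xi h_K^{-1}\|l_p - \bar q_h\|^2_{\partial K}$ afforded by the BDM structure together with the global $(l_u, \nabla l_p)$ identity, so that the cross-terms can be rewritten in a form yielding a strict inequality with constants independent of $h$, $\xi$, and $\gamma$.
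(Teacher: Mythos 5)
Your Step 1 is correct and is in fact a more direct route than the paper's own treatment of the $L^2$ part of the norm: the paper bounds $\xi^{-1}\norm[0]{l_u}^2_{\mathcal T_h}+\gamma\norm[0]{l_p}^2_{\mathcal T_h}$ through a mean/fluctuation splitting of $\bar q_h$ and a projection onto divergence-free fields (\cref{lem:previous-condensed-estimate-Darcy,lem:estimate-m_K}, plus \cref{eq:kappaqbarboundltqbarqbar}), whereas your energy identity $\xi^{-1}\norm[0]{l_u}^2_{\mathcal T_h}+\gamma\norm[0]{l_p}^2_{\mathcal T_h}=-\langle\bar q_h,l_u\cdot n\rangle_{\partial\mathcal T_h}$ combined with the splitting $\bar q_h=\tilde l_p+(\bar q_h-\tilde l_p)$ gets there in a few lines. (A minor imprecision: $\tilde l_p$ is not single-valued on faces, but this is irrelevant since your integration by parts is cellwise and uses $\nabla\cdot l_u=-\gamma l_p$.)

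The gap is in Step 2, and you have correctly located it yourself: estimates (a) and (b) are circular, and the resolution you gesture at does not close the loop. The two-sided equivalence $\xi^{-1}\norm[0]{l_u+\xi\nabla l_p}_K^2\sim\xi h_K^{-1}\norm[0]{l_p-\bar q_h}^2_{\partial K}$ is true, but writing $\xi\nabla l_p=(l_u+\xi\nabla l_p)-l_u$ only reduces the claim to $\norm[0]{l_u+\xi\nabla l_p}_K\lesssim\norm[0]{l_u}_K$, and neither that equivalence nor the identity obtained by testing with $v=l_u$ delivers it: every combination of the test functions $l_u$, $\xi\nabla l_p$, $v^\star$ reproduces an inequality with $\xi\norm[0]{\nabla l_p}_K^2$ on the right-hand side carrying an $O(1)$ constant that cannot be absorbed. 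The fix is to use a \emph{single} test function that simultaneously prescribes the interior moments against $\nabla l_p$ and the normal traces against $h_K^{-1}(l_p-\bar q_h)$ --- precisely the BDM lifting underlying the inf-sup condition \cref{eq:inf-sup-H1} recalled in the appendix. That is what the paper does: it takes $\tilde u_h$ from \cref{eq:inf-sup-H1} applied to $(l_p(\bar q_h),\bar q_h)$, tests the local solver with $(\xi\tilde u_h,0)$, and obtains $\xi\tnorm{(l_p(\bar q_h),\bar q_h)}_p^2=-(l_u(\bar q_h),\tilde u_h)_{\mathcal T_h}\le c\,\tnorm{(l_p(\bar q_h),\bar q_h)}_p\,\xi^{1/2}\tnorm{l_u(\bar q_h)}_{v,D}$, which is \cref{eq:kappalpqbarboundluqbar} and yields exactly your target inequality $\xi\tnorm{(l_p(\bar q_h),\bar q_h)}_p^2\le c^2\xi^{-1}\norm[0]{l_u(\bar q_h)}^2_{\mathcal T_h}$ in two lines. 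With that replacement for Step 2 the argument is complete, and your Step 1 would in fact shorten the paper's appendix.
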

\begin{proof}
  See \cref{ss:Proof_mainLemmaStep2Darcy}.
\end{proof}

We can now prove \cref{eq:liftingcondition} for the Darcy problem.

\begin{theorem}
  \label{thm:uniformconstantDarcy}
  There exists a uniform constant $c_l > 0$ such that
  \cref{eq:liftingcondition} holds for the reduced form of
  \cref{eq:hdg2-scheme-darcy}.
\end{theorem}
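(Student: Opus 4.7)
The plan is to reduce the lifting condition to a comparison of two quadratic forms on $\bar{Q}_h^0$, connected through the auxiliary HDG problem. By \cref{lem:weak-reduced-darcy}, the elimination of $(u_h, p_h)$ via $A_{11}^{-1}$ coincides with the local solver: in the notation of \cref{thm:SpparamrobSa} with $\bar{X}_h = \bar{Q}_h^0$, one has $-A_{11}^{-1}A_{21}^T \bar{p}_h = (l_u(\bar{p}_h), l_p(\bar{p}_h))$. Thus the left-hand side of \cref{eq:liftingcondition} equals $\tnorm{(l_u(\bar{p}_h), l_p(\bar{p}_h), \bar{p}_h)}_{\boldsymbol{X}_h}$, which is precisely the quantity bounded from above in \cref{lem:mainLemmaStep2Darcy}.

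Next, I would characterize the norm on $\bar{X}_h$. By the standard Schur-complement minimization identity, applied to the positive operator $P$,
\begin{equation*}
\tnorm{\bar{p}_h}_{\bar{X}_h}^2 = \langle S_P \bar{p}_h, \bar{p}_h \rangle_{\bar{X}_h^{\ast}, \bar{X}_h} = \min_{(v_h, q_h) \in V_h \times Q_h} \tnorm{(v_h, q_h, \bar{p}_h)}_{\boldsymbol{X}_h}^2.
\end{equation*}
Because $v_h$ enters the inner product \cref{eq:darcyInnerProd} only through the positive term $\xi^{-1}\norm[0]{v_h}_{\mathcal{T}_h}^2$, the outer minimum is attained at $v_h = 0$, leaving $\tnorm{\bar{p}_h}_{\bar{X}_h}^2 = \min_{q_h \in Q_h} \tnorm{(q_h, \bar{p}_h)}_{q,D}^2$.

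Now I would chain the estimates. \cref{lem:mainLemmaStep2Darcy} gives
\begin{equation*}
\tnorm{(l_u(\bar{p}_h), l_p(\bar{p}_h), \bar{p}_h)}_{\boldsymbol{X}_h}^2 \le c_d\, \tilde{a}_h((\tilde{l}_p(\bar{p}_h), \bar{p}_h), (\tilde{l}_p(\bar{p}_h), \bar{p}_h)).
\end{equation*}
The local problem in \cref{def:localsolverAuxProb}, combined with the symmetry of $\tilde{a}_h$, ensures that $\tilde{l}_p(\bar{p}_h)$ is the $\tilde{a}_h$-minimizer over $Q_h$ for fixed $\bar{p}_h$: testing cellwise with $(q_h, 0)$ yields the global Galerkin orthogonality $\tilde{a}_h((\tilde{l}_p(\bar{p}_h), \bar{p}_h), (q_h, 0)) = 0$ for all $q_h \in Q_h$. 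Combining this minimization property with the upper bound in \cref{eq:coerbndatildeD} and taking the infimum over $q_h$,
\begin{equation*}
\tilde{a}_h((\tilde{l}_p(\bar{p}_h), \bar{p}_h), (\tilde{l}_p(\bar{p}_h), \bar{p}_h)) \le C_2 \min_{q_h \in Q_h} \tnorm{(q_h, \bar{p}_h)}_{q,D}^2 = C_2 \tnorm{\bar{p}_h}_{\bar{X}_h}^2,
\end{equation*}
so \cref{eq:liftingcondition} holds with $c_l = \sqrt{c_d C_2}$.

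The main obstacle is not any single technical estimate, since all of the necessary machinery has already been prepared, but rather the conceptual step of aligning three different descriptions of the same boundary degree of freedom: the Darcy local-solver lifting appearing on the left of \cref{eq:liftingcondition}, the auxiliary HDG lifting in the middle supplied by \cref{lem:mainLemmaStep2Darcy}, and the minimization characterization of the Schur-complement norm on the right. Once these are identified, the symmetric Galerkin minimization property of $\tilde{l}_p$ closes the loop and the uniformity of $c_l$ in $h$, $\xi$, and $\gamma$ follows from the uniformity of $c_d$ and $C_2$.
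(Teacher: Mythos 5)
Your proposal is correct and follows essentially the same route as the paper's proof: it invokes \cref{lem:mainLemmaStep2Darcy} to bound the lifted norm by the auxiliary energy $\tilde{a}_h$ at the lifted pair, and then bounds that energy by $C_2$ times the Schur-complement norm $\langle S_P\bar{q}_h,\bar{q}_h\rangle$ using the upper bound in \cref{eq:coerbndatildeD}. The only difference is presentational --- you express the two Schur-complement steps (the identity $\tilde{a}_h((\tilde{l}_p(\bar{q}_h),\bar{q}_h),(\tilde{l}_p(\bar{q}_h),\bar{q}_h)) = \langle S_{\tilde{A}}\bar{q}_h,\bar{q}_h\rangle$ and the choice $q_h = -P_{11}^{-1}P_{21}^T\bar{q}_h$) via Galerkin orthogonality and energy minimization rather than in the paper's operator/block-matrix notation, yielding the same constant $c_l = \sqrt{c_d C_2}$.
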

\begin{proof}
  \textbf{Step (i).} Let
  $\tilde{A}:\boldsymbol{Q}_h^0 \to \boldsymbol{Q}_h^{0,\ast}$ be the
  operator such that
  $\langle \tilde{A} \boldsymbol{p}_h, \boldsymbol{q}_h
  \rangle_{\boldsymbol{Q}_h^{\ast},\boldsymbol{Q}_h^0} =
  \tilde{a}_h(\boldsymbol{p}_h, \boldsymbol{q}_h)$ for all
  $\boldsymbol{p}_h,\boldsymbol{q}_h \in \boldsymbol{Q}_h^0$. We can
  write \cref{eq:HDGaux} as
  $\tilde{A}\boldsymbol{p}_h = \boldsymbol{f}_h$ in
  $\boldsymbol{Q}_h^0$. Since
  $\boldsymbol{Q}_h^0 := Q_h \times \bar{Q}_h^0$, we may further write
  $\tilde{A}\boldsymbol{p}_h = \boldsymbol{f}_h$ in block matrix form
  \begin{equation*}
    \begin{bmatrix}
      \tilde{A}_{11} & \tilde{A}_{21}^T
      \\
      \tilde{A}_{21} & \tilde{A}_{22}
    \end{bmatrix}
    \begin{bmatrix}
      p_h \\
      \bar{p}_h
    \end{bmatrix}
    =
    \begin{bmatrix}
      f_h
      \\
      0
    \end{bmatrix}.
  \end{equation*}
  From the bilinear form we remark that $\tilde{A}_{11}$ is
  invertible. Then, eliminating $p_h$, we find the following equation
  for $\bar{p}_h$:
  \begin{equation}
    \label{eq:darcySAaux}
    S_{\tilde{A}} \bar{p}_h = \bar{b}_h
    \quad \text{ where } \quad
    S_{\tilde{A}} = \tilde{A}_{22} - \tilde{A}_{21}\tilde{A}_{11}^{-1}\tilde{A}_{21}^T,
    \quad
    \bar{b}_h = - \tilde{A}_{21}\tilde{A}_{11}^{-1}f_h.
  \end{equation}
  Replacing $A$ in \cref{eq:AuhvhgeSA} by $\tilde{A}$ and using that
  $\tilde{A}_{11}$ is symmetric and positive, we find:
  \begin{equation}
    \label{eq:tildeAresultD}
    \langle \tilde{A}\boldsymbol{q}_h, \boldsymbol{q}_h \rangle_{\boldsymbol{Q}_h^{0,\ast},\boldsymbol{Q}_h^0}
    \ge \langle S_{\tilde{A}}\bar{q}_h, \bar{q}_h \rangle_{\bar{Q}_h^{0,\ast}, \bar{Q}_h^0}
    \quad \forall \boldsymbol{q}_h=(q_h,\bar{q}_h) \in \boldsymbol{Q}_h^0.
  \end{equation}
  \textbf{Step (ii).} Consider the inner product
  $(\cdot, \cdot)_{\boldsymbol{X}_h}$ defined in
  \cref{eq:darcyInnerProd}, and define the preconditioner
  $P : \boldsymbol{X}_h \to \boldsymbol{X}_h^{\ast}$, and the
  operators $P^u : V_h \to V_h^{\ast}$ and
  $P^p:\boldsymbol{Q}_h^0 \to \boldsymbol{Q}_h^{0,\ast}$ such that
  \begin{equation*}
    \begin{split}
      \langle P (u_h,\boldsymbol{p}_h), (v_h,\boldsymbol{q}_h)
      \rangle_{\boldsymbol{X}_h^{\ast},\boldsymbol{X}_h}
      &=( (u_h,p_h,\bar{p}_h), (v_h,q_h,\bar{q}_h) )_{\boldsymbol{X}_h}
      =(u_h,v_h)_{v,D} + (\boldsymbol{p}_h,\boldsymbol{q}_h)_{q,D}
      \\
      &=\langle P^u u_h, v_h \rangle_{V_h^{\ast},V_h}
      + \langle P^p \boldsymbol{p}_h, \boldsymbol{q}_h \rangle_{\boldsymbol{Q}_h^{0,\ast},\boldsymbol{Q}_h}.
    \end{split}
  \end{equation*}
  We note that $P$ has the following structure:
  \begin{equation*}
    P =
    \begin{bmatrix}
      P^u & 0
      \\
      0 & P^p
    \end{bmatrix}
    =
    \begin{bmatrix}
      P^u & 0 & 0
      \\
      0 & P_{11}^p & (P_{21}^p)^T
      \\
      0 & P_{21}^p & P_{22}^p
    \end{bmatrix},
  \end{equation*}
  with $P_{11}^p : Q_h \to Q_h^{\ast}$,
  $P_{21}^p : Q_h \to \bar{Q}_h^{0,\ast}$, and
  $P_{22}^p : \bar{Q}_h^0 \to \bar{Q}_h^{0,\ast}$. We associate with
  $P$ the operator $S_P : \bar{Q}_h^0 \to \bar{Q}_h^{0,\ast}$ defined
  by $S_P := P_{22}^p - P_{21}^p(P_{11}^p)^{-1}(P_{21}^p)^T$.
  \\
  \textbf{Step (iii).} By \cref{eq:coerbndatildeD} and
  \cref{eq:tildeAresultD} we find that
  \begin{equation}
    \label{eq:QqhqhSAqbqbn}
    C_2 \langle P^p \boldsymbol{q}_h, \boldsymbol{q}_h \rangle_{\boldsymbol{Q}_h^{0,\ast},\boldsymbol{Q}_h^0}
    \ge \langle S_{\tilde{A}}\bar{q}_h, \bar{q}_h \rangle_{\bar{Q}_h^{0,\ast}, \bar{Q}_h^0}
    \quad \forall \boldsymbol{q}_h \in \boldsymbol{Q}_h^0.
  \end{equation}
  Similar to \cref{eq:AuhvhgeSA}, we have
  \begin{multline}
    \label{eq:AuhvhgeSAQn}
    \langle P^p\boldsymbol{q}_h, \boldsymbol{q}_h\rangle_{\boldsymbol{Q}_h^{0,\ast},\boldsymbol{Q}_h^0}
    \\
    =
    \langle P_{11}^p(q_h + (P_{11}^p)^{-1}(P_{21}^p)^T\bar{q}_h), q_h + (P_{11}^p)^{-1}(P_{21}^p)^{T}\bar{q}_h \rangle_{Q_h^{0,\ast},Q_h^0}
    + \langle S_P\bar{q}_h, \bar{q}_h \rangle_{\bar{Q}_h^{0,\ast},\bar{Q}_h^0}.
  \end{multline}
  Therefore, choosing $q_h = -P_{11}^{-1}P_{21}^T\bar{q}_h$ in
  \cref{eq:QqhqhSAqbqbn}, we obtain:
  \begin{equation}
    \label{eq:QqhqhSAqbqbQchosenn}
    C_2 \langle S_P\bar{q}_h, \bar{q}_h \rangle_{\bar{Q}_h^{0,\ast},\bar{Q}_h^0}
    \ge \langle S_{\tilde{A}}\bar{q}_h, \bar{q}_h \rangle_{\bar{Q}_h^{0,\ast}, \bar{Q}_h^0}
    \quad \forall \bar{q}_h \in \bar{Q}_h^0.
  \end{equation}
  \textbf{Step (iv).} Let
  $A : \boldsymbol{X}_h \to \boldsymbol{X}_h^{\ast}$ be the operator
  such that
  $\langle A \boldsymbol{x}_h, \boldsymbol{y}_h
  \rangle_{\boldsymbol{X}_h^{\ast},\boldsymbol{X}_h} =
  a_h(\boldsymbol{x}_h, \boldsymbol{y}_h)$ for all
  $\boldsymbol{x}_h, \boldsymbol{y}_h \in \boldsymbol{X}_h$. We can
  then write \cref{eq:hdg2-scheme-darcy} as
  $A \boldsymbol{x}_h = \boldsymbol{f}_h$ in $\boldsymbol{X}_h$. Here
  we write $\boldsymbol{X}_h := (V_h \times Q_h) \times \bar{Q}_h^0$
  and so rewrite $A\boldsymbol{x}_h = \boldsymbol{f}_h$ as
  \begin{equation*}
    \begin{bmatrix}
      A_{11} & A_{21}^T
      \\
      A_{21} & A_{22}
    \end{bmatrix}
    \begin{bmatrix}
      z_h \\ \bar{p}_h
    \end{bmatrix}
    =
    \begin{bmatrix}
      f_h \\ 0
    \end{bmatrix},
  \end{equation*}
  where $z_h = (u_h, p_h)$. Eliminating $z_h$, we find the following
  equation for $\bar{p}_h$:
  \begin{equation}
    \label{eq:darcySA}
    S_{A} \bar{p}_h = \bar{b}_h
    \quad \text{ where } \quad
    S_{A} = A_{22} - A_{21}A_{11}^{-1}A_{21}^T,
    \quad
    \bar{b}_h = - A_{21}A_{11}^{-1}f_h.
  \end{equation}
  \Cref{lem:mainLemmaStep2Darcy} in operator form states:
  \begin{equation*}
    \norm[0]{(-A_{11}^{-1}A_{21}^T\bar{q}_h,\bar{q}_h)}_{\boldsymbol{X}_h}^2
    \le c_d \langle S_{\tilde{A}}\bar{q}_h, \bar{q}_h \rangle_{\bar{Q}_h^{0,\ast},\bar{Q}_h^0}
    \quad \forall \bar{q}_h \in \bar{Q}_h^0.
  \end{equation*}
  Combining this with \cref{eq:QqhqhSAqbqbQchosenn}, we obtain:
  \begin{equation*}
    \norm[0]{(-A_{11}^{-1}A_{21}^T\bar{q}_h,\bar{q}_h)}_{\boldsymbol{X}_h}^2
    \le c_dC_2 \langle S_P\bar{q}_h, \bar{q}_h \rangle_{\bar{Q}_h^{0,\ast},\bar{Q}_h^0}
    \quad \forall \bar{q}_h \in \bar{Q}_h^0.
  \end{equation*}
  Since
  $\langle S_P\bar{q}_h, \bar{q}_h
  \rangle_{\bar{Q}_h^{0,\ast},\bar{Q}_h^0} =
  (\bar{q}_h,\bar{q}_h)_{\bar{Q}_h^0} =
  \norm[0]{\bar{q}_h}_{\bar{Q}_h^0}^2$, the result follows.
\end{proof}

That $S_P^{-1}$ is a parameter-robust preconditioner for the reduced
problem of the Darcy discretization \cref{eq:hdg2-scheme-darcy} is now
a consequence of \cref{thm:SpparamrobSa}.

\subsubsection{A counter example}
\label{sss:counter}

In this section we briefly discuss an example of a preconditioner
$(P')^{-1}$ that is parameter-robust for \cref{eq:hdg2-scheme-darcy}
but for which the reduced preconditioner $S_{P'}^{-1}$ is not
parameter-robust for the reduced form of \cref{eq:hdg2-scheme-darcy}.

Consider the following inner products on $V_h$ and
$\boldsymbol{Q}_h^0$:
\begin{equation*}
  \begin{split}
    (u_h,v_h)_{v,D'}
    :=& \xi^{-1}(u_h,v_h)_{\mathcal{T}_h} + M^{-1}(\nabla \cdot u_h, \nabla \cdot v_h)_{\mathcal{T}_h}
    + \xi^{-1}\langle h_F^{-1} \jump{u_h \cdot n}, \jump{v_h \cdot n} \rangle_{\mathcal{F}_h\backslash \Gamma},
    \\
    (\boldsymbol{p}_h,\boldsymbol{q}_h)_{q,D'}
    :=&M (p_h,q_h)_{\mathcal{T}_h}
    + \xi \langle h_K \bar{p}_h, \bar{q}_h\rangle_{\partial \mathcal{T}_h},
  \end{split}
\end{equation*}
where $\jump{\cdot}$ is the usual DG jump operator and
$M := \max(\xi, \gamma)$, and define the following inner product on
$\boldsymbol{X}_h$:
\begin{equation}
  \label{eq:darcyInnerProdalt}
  ( (u_h,p_h,\bar{p}_h), (v_h,q_h,\bar{q}_h) )_{\boldsymbol{X}_h'}
  := (u_h,v_h)_{v,D'} + (\boldsymbol{p}_h,\boldsymbol{q}_h)_{q,D'}.
\end{equation}
The norm induced by this inner product is denoted by
$\tnorm{\cdot}_{\boldsymbol{X}_h'}$. It can be shown that the
boundedness and coercivity constants in \cref{eq:wpconditions} are
uniform when using this norm. As such, the preconditioner $(P')^{-1}$
associated with the inner product defined in
\cref{eq:darcyInnerProdalt} is a parameter-robust preconditioner for
\cref{eq:hdg2-scheme-darcy}.

Eliminating the cell velocity and pressure degrees-of-freedom, the
reduced preconditioner $S_{P'}^{-1}$ is defined by
\begin{equation*}
  \langle S_{P'} \bar{q}_h, \bar{q}_h \rangle_{\bar{Q}_h^{0,\ast},\bar{Q}_h^0}
  = \norm[0]{\bar{q}_h}_{\bar{Q}_h^{0'}}^2
  \qquad
  \text{where}\qquad
  \norm[0]{\bar{q}_h}_{\bar{Q}_h^{0'}}^2 := \xi \norm[0]{h_K^{1/2}\bar{q}_h}_{\partial \mathcal{T}_h}^2.
\end{equation*}
By \cref{thm:uniformconstantDarcy} and
\cite[eq. (2.9)]{gopalakrishnan2003schwarz} we find:
\begin{equation*}
  \norm[0]{(-A_{11}^{-1}A_{21}^T\bar{q}_h,\bar{q}_h)}_{\boldsymbol{X}_h}^2
  \le C \norm[0]{\bar{q}_h}_{\bar{Q}_h^0}^2
  \le C h^{-2} \norm[0]{\bar{q}_h}_{\bar{Q}_h^{0'}}^2
  \quad \forall
  \bar{q}_h \in \bar{Q}_h^0. 
\end{equation*}
It is unclear whether this bound can be improved. This bound indicates
that the reduced preconditioner $S_{P'}^{-1}$ may not be robust with
mesh refinement for the reduced form on
\cref{eq:hdg2-scheme-darcy}. We indeed demonstrate this by numerical
experiment in \cref{sss:notsatisfied}.

\subsection{The Stokes problem}
\label{ss:stokes}

The Stokes problem is given by:
\begin{equation}
\label{eq:stokes-eq}
    -\nabla \cdot 2\nu\varepsilon(u) + \nabla p = f \text{ in } \Omega,
    \quad
    \nabla \cdot u = 0 \text{ in } \Omega,
    \quad
    u = 0 \text{ on } \Gamma,    
\end{equation}
and we impose $\int_{\Omega}p \dif x = 0$. In these equations, $u$
denotes the velocity, $p$ the (kinematic) pressure,
$f \in [L^2(\Omega)]^d$ is a source term, and $\nu > 0$ is the
kinematic viscosity. Let
$\boldsymbol{X}_h := \boldsymbol{V}_h \times \boldsymbol{Q}_h$. We
consider the hybridizable discontinuous Galerkin (HDG) discretization
of \cref{eq:stokes-eq} given in \cite{rhebergen2017analysis}: Find
$(\boldsymbol{u}_h, \boldsymbol{p}_h) \in \boldsymbol{X}_h$ such that
\begin{equation}
  \label{eq:hdg-scheme-stokes}
  a_h((\boldsymbol{u}_h,\boldsymbol{p}_h), (\boldsymbol{v}_h,\boldsymbol{q}_h))
  = (f, v_h)_{\mathcal{T}_h}
  \qquad \forall (\boldsymbol{v}_h, \boldsymbol{q}_h) \in \boldsymbol{X}_h,
\end{equation}
where
\begin{subequations}
  \begin{align}
    \label{eq:defah-stokes}
    a_h((\boldsymbol{u}_h,\boldsymbol{p}_h), (\boldsymbol{v}_h,\boldsymbol{q}_h))
    :=& c_h(\boldsymbol{u}_h, \boldsymbol{v}_h) + b_h(v_h, \boldsymbol{p}_{h}) + b_h(u_h, \boldsymbol{q}_{h}),
    \\
    \label{eq:defch-stokes}
    c_h(\boldsymbol{u}_h, \boldsymbol{v}_h)
    :=& \nu(\varepsilon(u_h), \varepsilon(v_h))_{\mathcal{T}_h}
      + \nu \langle \eta h_K^{-1} (u_h - \bar{u}_h), v_h - \bar{v}_h\rangle_{\partial\mathcal{T}_h}      
    \\ \nonumber
    & - \nu \langle u_h - \bar{u}_h, \varepsilon( v_h ) n\rangle_{\partial\mathcal{T}_h}
      - \nu \langle \varepsilon( u_h) n, v_h - \bar{v}_h \rangle_{\partial\mathcal{T}_h}     
  \end{align}
\end{subequations}
and with $b_h(\cdot, \cdot)$ defined in \cref{eq:Ddefbh}.

Consider the following inner products on $\boldsymbol{V}_h$,
$\boldsymbol{Q}_h$, and $\boldsymbol{X}_h$:
\begin{subequations}
\begin{align}
  &(\boldsymbol{u}_h, \boldsymbol{v}_h)_{v,S}
  :=\nu(\varepsilon(u_h), \varepsilon(v_h))_{\mathcal{T}_h}
    + \nu \eta \langle h_K^{-1} (u_h-\bar{u}_h), v_h-\bar{v}_h\rangle_{\partial \mathcal{T}_h},
  \\
  &(\boldsymbol{p}_h, \boldsymbol{q}_h)_{q,S}
    :=\nu^{-1}(p_h, q_h)_{\mathcal{T}_h}
    + \nu^{-1}\eta^{-1}\langle h_K \bar{p}_h, \bar{q}_h\rangle_{\partial \mathcal{T}_h},
    \\
  \label{eq:innerproductXhStokes}
  &((\boldsymbol{u}_h,\boldsymbol{p}_h),
  (\boldsymbol{v}_h,\boldsymbol{q}_h))_{\boldsymbol{X}_h} :=
  (\boldsymbol{u}_h, \boldsymbol{v}_h)_{v,S} + (\boldsymbol{p}_h,
  \boldsymbol{q}_h)_{q,S},    
\end{align}
\end{subequations}
and their induced norms $\tnorm{\boldsymbol{v}_h}_{v,S}$,
$\tnorm{\boldsymbol{q}_h}_{q,S}$, and
\begin{equation}
  \label{eq:normforStokes}
  \tnorm{(\boldsymbol{v}_h, \boldsymbol{q}_h)}_{\boldsymbol{X}_h}
  := \tnorm{\boldsymbol{v}_h}_{v,S} + \tnorm{\boldsymbol{q}_h}_{q,S}.
\end{equation}
In
\cref{sss:stokesStep1} we will show that the preconditioner $P^{-1}$
defined through the inner product \cref{eq:innerproductXhStokes}, see
\cref{eq:defOpP}, is parameter-robust for the Stokes discretization
\cref{eq:hdg-scheme-stokes}. In \cref{sss:stokesStep2} we then show
that the preconditioner $S_P^{-1}$, as defined by \cref{eq:defSCSP},
is parameter-robust for the reduced problem of the Stokes
discretization \cref{eq:hdg-scheme-stokes}.

\subsubsection{The Stokes problem: preconditioner $P^{-1}$}
\label{sss:stokesStep1}

We first recall the following result from \cite[Lemma
4.3]{rhebergen2017analysis}, (assuming that $\eta > 1$): there exists
a positive uniform constant $c_1$ such that
\begin{equation}
  \label{eq:chboundStokes}
  c_h(\boldsymbol{u}_h, \boldsymbol{v}_h) \le c_1 \tnorm{\boldsymbol{u}_h}_{v,S} \tnorm{\boldsymbol{v}_h}_{v,S}.
\end{equation}
We now have the following result.

\begin{lemma}
  \label{lem:cbciStokes}
  Let $\boldsymbol{X}_h := \boldsymbol{V}_h \times \boldsymbol{Q}_h$
  and let $a_h(\cdot, \cdot)$ be as defined in
  \cref{eq:defah-stokes}. If we endow $\boldsymbol{X}_h$ with the norm
  defined in \cref{eq:normforStokes}, then \cref{eq:wpconditions}
  holds with constants $c_b$ and $c_i$ independent of $h$ and $\nu$.
\end{lemma}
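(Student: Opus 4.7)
\textbf{Proof proposal for Lemma \ref{lem:cbciStokes}.}

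The plan is to verify the boundedness and inf-sup conditions \eqref{eq:wpconditions} separately, leveraging results already proved for this HDG scheme in \cite{rhebergen2017analysis}. The key point to monitor throughout is the $\nu$-dependence: since $\tnorm{\boldsymbol{v}_h}_{v,S}^2$ scales like $\nu$ while $\tnorm{\boldsymbol{q}_h}_{q,S}^2$ scales like $\nu^{-1}$, the cross product $\tnorm{\boldsymbol{v}_h}_{v,S}\tnorm{\boldsymbol{q}_h}_{q,S}$ is $\nu$-independent, which is exactly what is needed to bound $b_h$, which contains no $\nu$.

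For boundedness \eqref{eq:wpconditions_b}, I would bound $c_h$ directly using \eqref{eq:chboundStokes}. For the two $b_h$ terms, I would split each face integral as
\begin{equation*}
  \langle \bar{q}_h, v_h\cdot n\rangle_{\partial\mathcal{T}_h}
  = \langle \bar{q}_h, (v_h-\bar{v}_h)\cdot n\rangle_{\partial\mathcal{T}_h}
  + \langle \bar{q}_h, \bar{v}_h\cdot n\rangle_{\partial\mathcal{T}_h},
\end{equation*}
noting that the second term vanishes since $\bar{q}_h$ and $\bar{v}_h$ are single-valued on interior faces (so contributions from adjacent cells cancel via the opposite normals) and $\bar{v}_h=0$ on $\Gamma$. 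The volume term $(q_h,\nabla\cdot v_h)_{\mathcal{T}_h}$ is handled by Cauchy--Schwarz together with $\|\nabla\cdot v_h\|_{\mathcal{T}_h}\le C\|\varepsilon(v_h)\|_{\mathcal{T}_h}$ (a consequence of a piecewise Korn-type bound on polynomial spaces, or simply $\|\nabla v_h\|$), and the remaining face term is bounded by pairing $\|h_K^{1/2}\bar{q}_h\|_{\partial\mathcal{T}_h}$ with $\|h_K^{-1/2}(v_h-\bar{v}_h)\|_{\partial\mathcal{T}_h}$. Multiplying and dividing by the appropriate powers of $\nu$ and $\eta$ assigns each factor to one of the two norms, yielding $|b_h(v_h,\boldsymbol{q}_h)|\lesssim \tnorm{\boldsymbol{v}_h}_{v,S}\tnorm{\boldsymbol{q}_h}_{q,S}$ with a uniform constant.

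For the inf-sup \eqref{eq:wpconditions_i}, I would use the standard Brezzi-type argument. Two ingredients from \cite{rhebergen2017analysis} are needed: (i) coercivity of $c_h$, i.e.\ $c_h(\boldsymbol{v}_h,\boldsymbol{v}_h)\ge c_0\tnorm{\boldsymbol{v}_h}_{v,S}^2$ for $\eta$ sufficiently large, and (ii) a discrete inf-sup condition
\begin{equation*}
  \sup_{\boldsymbol{w}_h \in \boldsymbol{V}_h}
  \frac{b_h(w_h,\boldsymbol{p}_h)}{\tnorm{\boldsymbol{w}_h}_{v,S}}
  \ge \beta\, \tnorm{\boldsymbol{p}_h}_{q,S}
  \quad\forall\,\boldsymbol{p}_h\in\boldsymbol{Q}_h,
\end{equation*}
with $\beta>0$ uniform in $h$ and $\nu$ (the $\nu$-uniformity again follows from the weighting of the two norms). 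Given $(\boldsymbol{u}_h,\boldsymbol{p}_h)$, I pick a Fortin-type $\boldsymbol{w}_h$ realizing (ii) with $\tnorm{\boldsymbol{w}_h}_{v,S}\lesssim\tnorm{\boldsymbol{p}_h}_{q,S}$, and choose the test function $(\boldsymbol{v}_h,\boldsymbol{q}_h)=(\boldsymbol{u}_h+\alpha\boldsymbol{w}_h,-\boldsymbol{p}_h)$. A short calculation gives
\begin{equation*}
  a_h((\boldsymbol{u}_h,\boldsymbol{p}_h),(\boldsymbol{v}_h,\boldsymbol{q}_h))
  = c_h(\boldsymbol{u}_h,\boldsymbol{u}_h)
  + \alpha\, c_h(\boldsymbol{u}_h,\boldsymbol{w}_h)
  + \alpha\, b_h(w_h,\boldsymbol{p}_h),
\end{equation*}
which, after applying (i), \eqref{eq:chboundStokes}, (ii), and Young's inequality, is bounded below by $c_i(\tnorm{\boldsymbol{u}_h}_{v,S}^2+\tnorm{\boldsymbol{p}_h}_{q,S}^2)$ for $\alpha$ small enough, with $c_i>0$ uniform. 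Combining with $\tnorm{(\boldsymbol{v}_h,\boldsymbol{q}_h)}_{\boldsymbol{X}_h}\lesssim \tnorm{(\boldsymbol{u}_h,\boldsymbol{p}_h)}_{\boldsymbol{X}_h}$ yields \eqref{eq:wpconditions_i}.

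The main obstacle is securing the inf-sup condition (ii) with the correct $\nu$-weighting; this reduces to constructing a Fortin operator into $\boldsymbol{V}_h$ that simultaneously controls the volume pressure norm $\nu^{-1/2}\|p_h\|_{\mathcal{T}_h}$ and the skeleton pressure norm $\nu^{-1/2}\eta^{-1/2}\|h_K^{1/2}\bar{p}_h\|_{\partial\mathcal{T}_h}$, which is precisely the content of the Stokes stability analysis in \cite{rhebergen2017analysis}; once that is invoked the rest is mechanical.
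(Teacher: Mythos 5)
Your proposal is correct and follows essentially the same route as the paper's proof: boundedness of $b_h$ via single-valuedness of the facet functions plus Cauchy--Schwarz with the $\nu$-weights cancelling, and the inf-sup via a Brezzi-type argument with test function $(\boldsymbol{u}_h+\delta\nu^{-1}\tilde{\boldsymbol{u}}_h,-\boldsymbol{p}_h)$ built from the coercivity of $c_h$ and the $b_h$ inf-sup condition of \cite{rhebergen2018preconditioning} (your weighted inf-sup (ii) is just that condition rescaled by $\nu^{-1/2}$). One small simplification: no Korn-type bound is needed for the volume term, since $\nabla\cdot v_h=\operatorname{tr}\varepsilon(v_h)$ gives $\norm[0]{\nabla\cdot v_h}_{\mathcal{T}_h}\le\sqrt{d}\,\norm[0]{\varepsilon(v_h)}_{\mathcal{T}_h}$ directly.
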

\begin{proof}
  See \cref{ss:Proof_cbciStokes}.
\end{proof}

By \cref{rem:Phparamrobust} we therefore find that $P^{-1}$ is a
parameter-robust preconditioner for \cref{eq:hdg-scheme-stokes}.

\subsubsection{The Stokes problem: the reduced preconditioner $S_P^{-1}$}
\label{sss:stokesStep2}

To show \cref{eq:liftingcondition} we require some preliminary
results. We start by defining the local solver for the Stokes problem
similar to \cite[Definition 2]{rhebergen2018preconditioning}.

\begin{definition}[Local solver for the Stokes problem]
  \label{def:locSolverStokes}
  Let $K \in \mathcal{T}_h$, and let $V(K) := [\mathbb{P}_k(K)]^d$ and
  $Q(K) := \mathbb{P}_{k-1}(K)$ be the polynomial spaces in which the
  velocity and pressure are approximated on a cell. Given
  $(\bar{m}_h,\bar{t}_h) \in \bar{V}_h \times \bar{Q}_h$ and
  $s \in [L^2(\Omega)]^d$, the functions
  $u_h^L(\bar{m}_h, \bar{t}_h, s)\in V_h$ and
  $p_h^L(\bar{m}_h, \bar{t}_h, s) \in Q_h$, restricted to cell $K$,
  satisfy the local problem
  \begin{equation}
    \label{eq3:localsolver-stokes}
    a_h^K((u_h^L, p_h^L), (v_h, q_h))
    = f_h^K(v_h) \qquad \forall (v_h, q_h) \in V(K) \times Q(K),
  \end{equation}
  where  
  \begin{equation*}
    \begin{split}
      &a_h^K((u_h, p_h), (v_h, q_h))
      := \nu (\varepsilon(u_h), \varepsilon(v_h))_{K}
      + \nu \eta h_K^{-1} \langle u_h, v_h \rangle_{\partial K}          
      - \nu \langle \varepsilon(u_h)n, v_h \rangle_{\partial K}
      \\
      & \hspace{10em}
      - \nu \langle \varepsilon(v_h)n, u_h \rangle_{\partial K}
      - (p_h, \nabla \cdot v_h)_K
      - (q_h, \nabla \cdot u_h)_K, 
      \\
    &f_h^K(v_h)
    := (s, v_h)_K
    - \nu \langle \varepsilon(v_h)n, \bar{m}_h \rangle_{\partial K}
    + \nu \eta h_K^{-1} \langle \bar{m}_h, v_h \rangle_{\partial K}
    - \langle \bar{t}_h, v_h \cdot n \rangle_{\partial K}.      
    \end{split}
  \end{equation*}
\end{definition}

Using the identical steps as in the proof of \cite[Lemma
4]{rhebergen2018preconditioning}, the following lemma shows that we
can eliminate $u_h$ and $p_h$ from \cref{eq:hdg-scheme-stokes} using
the local solvers defined in \cref{def:locSolverStokes}.

\begin{lemma}
  \label{lem:weak-reduced-stokes}
  Let $f \in L^2(\Omega)$ be given and let $u_h^f := u_h^L(0,0,f)$ and
  $p_h^f := p_h^L(0,0,f)$. Furthermore, define
  $l_u(\bar{v}_h, \bar{q}_h) := u_h^L(\bar{v}_h, \bar{q}_h, 0)$ and
  $l_p(\bar{v}_h, \bar{q}_h) := p_h^L(\bar{v}_h, \bar{q}_h, 0)$ for
  all $(\bar{v}_h, \bar{q}_h) \in \bar{V}_h \times \bar{Q}_h$. Let
  $(\bar{u}_h, \bar{p}_h) \in \bar{V}_h \times \bar{Q}_h$ be the
  solution to
  \begin{equation}
    \label{eq:weak-reduced-stokes-2}
    \bar{a}_h((\bar{u}_h, \bar{p}_h), (\bar{v}_h, \bar{q}_h))
    = \bar{f}_h(\bar{v}_h, \bar{q}_h)
    \qquad \forall (\bar{v}_h, \bar{q}_h) \in \bar{V}_h \times \bar{Q}_h,
  \end{equation}
  where
  $\bar{f}_h(\bar{v}_h, \bar{q}_h) := (f, l_u(\bar{v}_h,
  \bar{q}_h))_{\mathcal{T}_h}$, and where
  \begin{multline*}
    \bar{a}_h((\bar{u}_h, \bar{p}_h), (\bar{v}_h, \bar{q}_h))
    =
    c_h((l_u(\bar{u}_h, \bar{p}_h), \bar{u}_h), (l_u(\bar{v}_h, \bar{q}_h)))
    \\
    + b_h(l_u(\bar{v}_h, \bar{q}_h), (l_p(\bar{u}_h, \bar{p}_h), \bar{p}_h))
    + b_h(l_u(\bar{u}_h, \bar{p}_h), (l_p(\bar{v}_h, \bar{q}_h), \bar{q}_h)).
  \end{multline*}
  Then $(u_h,\bar{u}_h, p_h,\bar{p}_h)$, where
  $u_h = u_h^f + l_u(\bar{u}_h, \bar{p}_h)$ and
  $p_h = p_h^f + l_p(\bar{u}_h, \bar{p}_h)$, solves
  \cref{eq:hdg-scheme-stokes}.
\end{lemma}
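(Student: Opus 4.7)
The plan is to follow the standard static condensation argument, which the authors point out is essentially identical to \cite[Lemma 4]{rhebergen2018preconditioning}, now adapted to the Stokes setting.

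First, I would test \cref{eq:hdg-scheme-stokes} against $(v_h, 0, q_h, 0) \in \boldsymbol{X}_h$ for arbitrary $v_h \in V_h$ and $q_h \in Q_h$. Because both $V_h$ and $Q_h$ are broken spaces with no continuity constraint across faces, the resulting equation decouples cellwise. Carefully collecting the contributions from \cref{eq:defch-stokes} and \cref{eq:Ddefbh}, the cellwise equation becomes exactly \cref{eq3:localsolver-stokes} on each $K \in \mathcal{T}_h$ with data $\bar{m}_h = \bar{u}_h|_{\partial K}$, $\bar{t}_h = \bar{p}_h|_{\partial K}$, and $s = f|_K$. Well-posedness of the local solver (which follows from coercivity of $a_h^K$ modulo the constant pressure mode together with the usual inf-sup/discrete Korn arguments on $V(K) \times Q(K)$) then allows us to split by linearity: $u_h = u_h^f + l_u(\bar u_h, \bar p_h)$ and $p_h = p_h^f + l_p(\bar u_h, \bar p_h)$.

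Second, I would substitute this ansatz back into \cref{eq:hdg-scheme-stokes} and test with $(l_u(\bar v_h, \bar q_h), \bar v_h, l_p(\bar v_h, \bar q_h), \bar q_h)$. The cell contributions of $a_h$ to the part using $(u_h^f, p_h^f)$ collapse, using the local equations with zero skeleton data tested against $(l_u(\bar v_h, \bar q_h), l_p(\bar v_h, \bar q_h))$, into exactly $(f, l_u(\bar v_h, \bar q_h))_{\mathcal{T}_h}$. What remains on the left-hand side are the terms
\begin{equation*}
  c_h((l_u(\bar u_h,\bar p_h), \bar u_h), (l_u(\bar v_h,\bar q_h), \bar v_h))
  + b_h(l_u(\bar v_h,\bar q_h), (l_p(\bar u_h,\bar p_h), \bar p_h))
  + b_h(l_u(\bar u_h,\bar p_h), (l_p(\bar v_h,\bar q_h), \bar q_h)),
\end{equation*}
which is precisely $\bar a_h((\bar u_h,\bar p_h), (\bar v_h,\bar q_h))$, while the $f$-tested term becomes $\bar f_h(\bar v_h, \bar q_h)$ after subtracting the local contribution that was absorbed.

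Finally, I would observe that the construction is reversible: given a solution $(\bar u_h, \bar p_h)$ of \cref{eq:weak-reduced-stokes-2}, setting $u_h = u_h^f + l_u(\bar u_h,\bar p_h)$ and $p_h = p_h^f + l_p(\bar u_h,\bar p_h)$ satisfies \cref{eq:hdg-scheme-stokes} when tested against any $(v_h, \bar v_h, q_h, \bar q_h)$, since an arbitrary test function may be written as $(v_h - l_u(\bar v_h,\bar q_h), 0, q_h - l_p(\bar v_h,\bar q_h), 0) + (l_u(\bar v_h,\bar q_h), \bar v_h, l_p(\bar v_h,\bar q_h), \bar q_h)$, and both summands are handled by the local problem and the reduced problem respectively.

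The main obstacle, as in \cite{rhebergen2018preconditioning}, is the bookkeeping: one must be scrupulous about which boundary terms belong to the local problem and which belong to the skeleton equation, because the local bilinear form $a_h^K$ and the global $a_h$ treat the boundary $\bar u_h$, $\bar p_h$ either as data or as unknowns in slightly different-looking but algebraically consistent ways. Once the local solver is identified and its symmetry is exploited, the collapse of the $(u_h^f, p_h^f)$ contribution into the right-hand side of \cref{eq:weak-reduced-stokes-2} is automatic.
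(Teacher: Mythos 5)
Your proposal is correct and follows exactly the static condensation argument that the paper invokes by reference to \cite[Lemma 4]{rhebergen2018preconditioning} (the paper omits the proof for this reason): testing against cell-only functions to identify the local solver, splitting by linearity of the local solver, and decomposing an arbitrary test function into a cell-only part and a lifted skeleton part $(l_u(\bar{v}_h,\bar{q}_h),\bar{v}_h,l_p(\bar{v}_h,\bar{q}_h),\bar{q}_h)$, with the symmetry of $a_h^K$ handling the cross terms. The only loose point is the bookkeeping of the $(u_h^f,p_h^f)$ cross contribution (it in fact cancels entirely against the skeleton-trace terms, leaving $(f,l_u(\bar{v}_h,\bar{q}_h))_{\mathcal{T}_h}$ as the right-hand side), but this does not affect the validity of the argument.
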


The following result relates the reduced form of
\cref{eq:hdg-scheme-stokes} to the norm
$\tnorm{\cdot}_{\boldsymbol{X}_h}$.

\begin{lemma}
  \label{lem:mainLemmaStep2Stokes}
  Consider any $(\bar{v}_h, \bar{q}_h) \in \bar{V}_h \times \bar{Q}_h$
  and let $l_u(\bar{v}_h,\bar{q}_h)$ and $l_p(\bar{v}_h,\bar{q}_h)$ be
  as defined in \cref{lem:weak-reduced-stokes}. Furthermore, let
  $l_u(\bar{v}_h) := u_h^L(\bar{v}_h,0,0)$. There exists a positive
  uniform constant $c_s$ such that
  \begin{multline*}
    \tnorm{(l_u(\bar{v}_h,\bar{q}_h), \bar{v}_h, l_p(\bar{v}_h,\bar{q}_h), \bar{q}_h)}_{\boldsymbol{X}_h}^2
    \\
    \le c_s \eta \del[2]{ \eta c_h((l_u(\bar{v}_h), \bar{v}_h), (l_u(\bar{v}_h), \bar{v}_h))
    + \nu^{-1}\eta^{-1}\norm[0]{h_K^{1/2}\bar{q}_h}_{\partial \mathcal{T}_h}^2 }.
  \end{multline*}
\end{lemma}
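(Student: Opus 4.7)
The plan is to exploit the linearity of the local solver of
\cref{def:locSolverStokes}. Set $\tilde u_h := u_h^L(0,\bar q_h,0)$,
$\tilde p_h := p_h^L(0,\bar q_h,0)$, and $l_p(\bar v_h):=p_h^L(\bar
v_h,0,0)$. Linearity of \eqref{eq3:localsolver-stokes} in its
right-hand side gives $l_u(\bar v_h,\bar q_h) = l_u(\bar v_h) + \tilde
u_h$ and $l_p(\bar v_h,\bar q_h) = l_p(\bar v_h) + \tilde p_h$. After
applying the triangle inequality to the inner-product induced norm on
$\boldsymbol{X}_h$ and noting that $\nu^{-1}\eta^{-1}\|h_K^{1/2}\bar
q_h\|_{\partial\mathcal{T}_h}^2$ is already of the desired form for
$\eta\geq 1$, the claim reduces to the two estimates
\begin{align*}
  \tnorm{(l_u(\bar v_h),\bar v_h)}_{v,S}^2 + \nu^{-1}\norm[0]{l_p(\bar v_h)}_{\mathcal{T}_h}^2
  &\lesssim \eta^{2}\,c_h((l_u(\bar v_h),\bar v_h),(l_u(\bar v_h),\bar v_h)),
  \\
  \tnorm{(\tilde u_h,0)}_{v,S}^2 + \nu^{-1}\norm[0]{\tilde p_h}_{\mathcal{T}_h}^2
  &\lesssim \nu^{-1}\norm[0]{h_K^{1/2}\bar q_h}_{\partial\mathcal{T}_h}^2.
\end{align*}

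For the first estimate, the velocity part follows from $c_h$
coercivity, valid for $\eta$ sufficiently large, see
\cite{rhebergen2017analysis}. For the pressure, I would invoke the
local BDM-type inf-sup to choose, on each cell $K$, a $w_K\in V(K)$
with $\nabla\cdot w_K = l_p(\bar v_h)|_K$ and
$\|w_K\|_{H^1(K)}\lesssim\|l_p(\bar v_h)\|_K$, and test
\eqref{eq3:localsolver-stokes} (with data $\bar m_h=\bar v_h$, $\bar
t_h=0$, $s=0$) with $(w_K,0)$. Discrete trace and inverse inequalities
applied to the resulting identity give $\nu^{-1}\|l_p(\bar
v_h)\|_{\mathcal{T}_h}^2\lesssim \eta\,c_h(\cdot,\cdot)$; combined
with coercivity this produces the stated $\eta^{2}$ factor. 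For the
second estimate, I would test the local solver for $(\tilde u_h,\tilde
p_h)$ with $(\tilde u_h,-\tilde p_h)$: the two divergence terms cancel
by the sign flip, the left side collapses to $c_h((\tilde
u_h,0),(\tilde u_h,0))$, and the right side becomes $-\langle\bar
q_h,\tilde u_h\cdot n\rangle_{\partial\mathcal{T}_h}$. Coercivity of
$c_h$, Cauchy--Schwarz on each face, and the inequality $\nu\eta
\norm[0]{h_K^{-1/2}\tilde u_h}_{\partial\mathcal T_h}^2 \leq
\tnorm{(\tilde u_h,0)}_{v,S}^2$ then yield the desired bound on
$\tnorm{(\tilde u_h,0)}_{v,S}^2$; the pressure $\tilde p_h$ is then
controlled by the same inf-sup trick as above applied to the $\bar
q_h$ problem.

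The main obstacle is the careful tracking of $\eta$- and
$h_K$-dependent weights through the inverse and trace inequalities:
the pressure bound in the second estimate must come out free of any
$\eta$-blow-up on its right-hand side, while one extra factor of
$\eta$ is allowed in the first estimate because the $\eta^{2}$ factor
in the statement of the lemma absorbs it. Once both inequalities are
established, summing them and collecting the prefactors from the
triangle inequality finishes the proof.
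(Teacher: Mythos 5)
Your argument is correct in outline but follows a genuinely different route from the paper's. The paper does not decompose the local solver by linearity: it bounds $\tnorm{(l_u(\bar v_h,\bar q_h),\bar v_h)}_{v,S}^2$ by testing the coupled local problem with $(l_u-m_K(\bar v_h),-l_p)$ (\cref{lem:condensed-estimate-stokes}), obtaining a bound in terms of $\nu\eta\tnorm{\bar v_h}_{h,u}^2$, and then converts $\nu\tnorm{\bar v_h}_{h,u}^2$ into $c_h((l_u(\bar v_h),\bar v_h),(l_u(\bar v_h),\bar v_h))$ via the two-sided equivalence \cref{eq:Lem5Rheb2018}; the cell pressure is controlled through the \emph{global} inf-sup condition \cref{eq:bhinfsupStokes} applied to the pair $(l_p,\eta^{-1/2}\bar q_h)$ (\cref{lem:condensed-estimate-stokes-lpS}). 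You instead split $l_u(\bar v_h,\bar q_h)=l_u(\bar v_h)+u_h^L(0,\bar q_h,0)$ by linearity, handle the $\bar q_h$-driven part with the test pair $(\tilde u_h,-\tilde p_h)$ (which does collapse the saddle-point terms and yields $c_h((\tilde u_h,0),(\tilde u_h,0))=-\langle\bar q_h,\tilde u_h\cdot n\rangle_{\partial\mathcal T_h}$, from which your bound follows correctly), invoke coercivity of $c_h$ directly so that \cref{eq:Lem5Rheb2018} is never needed, and control both pressures by a \emph{local} right inverse of the divergence. This is more self-contained (cell-by-cell, no global inf-sup) at the price of the extra bookkeeping from the triangle inequality; both routes deliver the same $\eta$-dependence.

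One point must be sharpened for your pressure estimates to close: the bound $\norm[0]{w_K}_{H^1(K)}\lesssim\norm[0]{l_p(\bar v_h)}_K$ you state for the local divergence lift is not enough. The facet terms $\nu\eta h_K^{-1}\langle l_u-\bar v_h,w_K\rangle_{\partial K}$ and $\langle\bar q_h,w_K\cdot n\rangle_{\partial K}$ require $\norm[0]{w_K}_{\partial K}\lesssim h_K^{1/2}\norm[0]{l_p(\bar v_h)}_K$, i.e.\ the scaled estimates $\norm[0]{w_K}_{K}\lesssim h_K\norm[0]{l_p(\bar v_h)}_K$ and $\norm[0]{\nabla w_K}_K\lesssim\norm[0]{l_p(\bar v_h)}_K$; with only the unscaled $H^1$ bound, the discrete trace inequality leaves a spurious factor of $h_K^{-1}$ in $\nu^{-1}\norm[0]{l_p(\bar v_h)}_{\mathcal T_h}^2$ and the estimate degenerates under mesh refinement. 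The scaled bound does hold (construct a right inverse of $\nabla\cdot:[\mathbb{P}_k(\hat K)]^d\to\mathbb{P}_{k-1}(\hat K)$ on the reference cell and map affinely, using shape regularity), so this is a repairable omission in the write-up rather than a flaw in the strategy.
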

\begin{proof}
  See \cref{ss:Proof_mainLemmaStep2Stokes}.
\end{proof}

We can now prove \cref{eq:liftingcondition} for the Stokes problem.

\begin{theorem}
  There exists a constant $c_l > 0$, independent of $h$ and the model
  parameters, but dependent on the penalty parameter $\eta$, such that
  \cref{eq:liftingcondition} holds for the reduced form of
  \cref{eq:hdg-scheme-stokes}.
\end{theorem}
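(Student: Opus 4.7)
The plan is to follow the four-step structure of the proof of \cref{thm:uniformconstantDarcy}, now using $c_h$ as the auxiliary bilinear form on $\boldsymbol{V}_h$. This corresponds to the HDG discretization of $-\nabla \cdot 2\nu\varepsilon(u) = f$ and plays the role that $\tilde{a}_h$ played in the Darcy analysis.

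\textbf{Step (i).} Introduce the operator $\tilde{A}^c : \boldsymbol{V}_h \to \boldsymbol{V}_h^{\ast}$ defined by $\langle \tilde{A}^c \boldsymbol{u}_h, \boldsymbol{v}_h\rangle = c_h(\boldsymbol{u}_h, \boldsymbol{v}_h)$. The upper bound \cref{eq:chboundStokes}, together with the standard HDG coercivity $C_1 \tnorm{\boldsymbol{v}_h}_{v,S}^2 \le c_h(\boldsymbol{v}_h, \boldsymbol{v}_h)$ (valid for $\eta$ sufficiently large, as in \cite{rhebergen2017analysis}), frames $\tilde{A}^c$ as symmetric positive definite. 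Block-decomposing it according to $\boldsymbol{V}_h = V_h \times \bar{V}_h$, the $(1,1)$-block is invertible, so the Schur complement $S_{\tilde{A}^c}$ is well defined; as in \cref{eq:tildeAresultD}, positivity of $\tilde{A}^c_{11}$ yields $\langle \tilde{A}^c\boldsymbol{v}_h, \boldsymbol{v}_h\rangle \ge \langle S_{\tilde{A}^c}\bar{v}_h, \bar{v}_h\rangle$ for all $\boldsymbol{v}_h = (v_h, \bar{v}_h)$.

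\textbf{Steps (ii)--(iii).} Because the preconditioner is block-diagonal, $P = \mathrm{diag}(P^u, P^p)$ with $P^u$ induced by $(\cdot,\cdot)_{v,S}$ and $P^p$ by $(\cdot,\cdot)_{q,S}$, the Schur complement also splits as $S_P = \mathrm{diag}(S_{P^u}, S_{P^p})$. Hence $\tnorm{(\bar{v}_h, \bar{q}_h)}_{\bar{X}_h}^2 = \langle S_{P^u}\bar{v}_h, \bar{v}_h\rangle + \langle S_{P^p}\bar{q}_h, \bar{q}_h\rangle$, and minimizing $(\boldsymbol{p}_h, \boldsymbol{q}_h)_{q,S}$ in $p_h$ at $p_h = 0$ gives $\langle S_{P^p}\bar{q}_h, \bar{q}_h\rangle = \nu^{-1}\eta^{-1}\norm[0]{h_K^{1/2}\bar{q}_h}_{\partial\mathcal{T}_h}^2$. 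Repeating the argument of \cref{eq:QqhqhSAqbqbn}--\cref{eq:QqhqhSAqbqbQchosenn} with $P^u$ in place of $P^p$ and $\tilde{A}^c$ in place of $\tilde{A}$ (using $c_h(\boldsymbol{v}_h, \boldsymbol{v}_h) \le c_1 \langle P^u \boldsymbol{v}_h, \boldsymbol{v}_h\rangle$ and then choosing $v_h = -(P^u_{11})^{-1}(P^u_{21})^T \bar{v}_h$) yields $\langle S_{\tilde{A}^c}\bar{v}_h, \bar{v}_h\rangle \le c_1 \langle S_{P^u}\bar{v}_h, \bar{v}_h\rangle$.

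\textbf{Step (iv).} Apply \cref{lem:mainLemmaStep2Stokes} to bound the squared norm of the full lifting $(l_u(\bar{u}_h,\bar{p}_h), \bar{u}_h, l_p(\bar{u}_h,\bar{p}_h), \bar{p}_h)$ in $\tnorm{\cdot}_{\boldsymbol{X}_h}^2$ by $c_s\eta\bigl[\eta c_h((l_u(\bar{u}_h), \bar{u}_h), (l_u(\bar{u}_h), \bar{u}_h)) + \nu^{-1}\eta^{-1}\norm[0]{h_K^{1/2}\bar{p}_h}_{\partial\mathcal{T}_h}^2\bigr]$. The pressure term coincides with $\langle S_{P^p}\bar{p}_h, \bar{p}_h\rangle$, which is exactly the pressure contribution to $\tnorm{(\bar{u}_h,\bar{p}_h)}_{\bar{X}_h}^2$. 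For the velocity term, observe that $l_u(\bar{u}_h) := u_h^L(\bar{u}_h, 0, 0)$ minimizes $c_h((\cdot, \bar{u}_h), (\cdot, \bar{u}_h))$ only on the cell-wise divergence-free subspace of $V_h$, since the pressure Lagrange multiplier $l_p(\bar{u}_h)$ in the local saddle-point system enforces $\nabla\cdot u^* = 0$ on each $K$; in contrast, $\langle S_{\tilde{A}^c}\bar{u}_h, \bar{u}_h\rangle$ is the unconstrained minimum over $V_h$. The main obstacle is therefore the quasi-optimality estimate
\begin{equation*}
  c_h((l_u(\bar{u}_h), \bar{u}_h), (l_u(\bar{u}_h), \bar{u}_h)) \le C \langle S_{\tilde{A}^c}\bar{u}_h, \bar{u}_h\rangle,
\end{equation*}
which I would establish via the cell-level surjectivity of $\nabla\cdot : [\mathbb{P}_k(K)]^d \to \mathbb{P}_{k-1}(K)$ (i.e., local inf-sup stability of the BDM-like velocity--pressure pair together with an inverse estimate), allowing one to correct the unconstrained $c_h$-minimizer into the divergence-free subspace at a controlled cost. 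Combining this with Steps (ii)--(iii) bounds the velocity term by $C\eta \langle S_{P^u}\bar{u}_h, \bar{u}_h\rangle$, and summing with the pressure contribution gives \cref{eq:liftingcondition} with $c_l$ depending on $\eta$ but independent of $h$ and $\nu$.
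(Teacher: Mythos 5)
Your architecture matches the paper's proof almost exactly: you introduce the operator of $c_h$ on $\boldsymbol{V}_h$ (the paper's $C$, your $\tilde{A}^c$), form its Schur complement $S_C$, exploit the block-diagonal structure of $P$ to get $S_P = \mathrm{diag}(S_{P^u}, S_{P^p})$ with $\langle S_{P^p}\bar{q}_h,\bar{q}_h\rangle = \nu^{-1}\eta^{-1}\norm[0]{h_K^{1/2}\bar{q}_h}_{\partial\mathcal{T}_h}^2$, derive $\langle S_C\bar{v}_h,\bar{v}_h\rangle \le c_1\langle S_{P^u}\bar{v}_h,\bar{v}_h\rangle$ by choosing $v_h = -(P_{11}^u)^{-1}(P_{21}^u)^T\bar{v}_h$, and finish with \cref{lem:mainLemmaStep2Stokes}. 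The one place you diverge is the step you correctly flag as the crux: since $l_u(\bar{v}_h) = u_h^L(\bar{v}_h,0,0)$ is constrained to be cell-wise divergence-free while $\langle S_C\bar{v}_h,\bar{v}_h\rangle$ is the \emph{unconstrained} minimum of $c_h((\cdot,\bar{v}_h),(\cdot,\bar{v}_h))$ over $V_h$, one needs $c_h((l_u(\bar{v}_h),\bar{v}_h),(l_u(\bar{v}_h),\bar{v}_h)) \le C\langle S_C\bar{v}_h,\bar{v}_h\rangle$. You propose to prove this by a local inf-sup correction into the divergence-free subspace, but you only sketch it ("I would establish\ldots"), so as written this is the one incomplete step. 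It is also heavier machinery than necessary: the paper already supplies the estimate through \cref{eq:Lem5Rheb2018}, which gives $c_h((l_u(\bar{v}_h),\bar{v}_h),(l_u(\bar{v}_h),\bar{v}_h)) \le c_2\nu\tnorm{\bar{v}_h}_{h,u}^2$, while the coercivity \cref{eq:chcoercivity} combined with the mean-value trace bound $\nu\tnorm{\bar{v}_h}_{h,u}^2 \le C\tnorm{(v_h,\bar{v}_h)}_{v,S}^2$ (valid for every $v_h\in V_h$; cf.\ the estimate invoked in \cref{ss:Proof_condensed-estimate-stokes-lpS} from \cite[eq.\ (47)]{rhebergen2018preconditioning}) yields $\nu\tnorm{\bar{v}_h}_{h,u}^2 \le C\min_{v_h}c_h((v_h,\bar{v}_h),(v_h,\bar{v}_h)) = C\langle S_C\bar{v}_h,\bar{v}_h\rangle$. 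Chaining these two closes the gap without any divergence-free correction argument; the paper itself performs this substitution tacitly when it restates \cref{lem:mainLemmaStep2Stokes} in operator form as \cref{eq:leminopformStokes}. So: same route, correct identification of the only delicate point, but that point should be discharged via \cref{eq:Lem5Rheb2018} rather than left as a sketched local inf-sup argument.
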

\begin{proof}
  \textbf{Step (i).} Let
  $C:\boldsymbol{V}_h \to \boldsymbol{V}_h^{\ast}$ be the operator
  such that
  $\langle C \boldsymbol{u}_h, \boldsymbol{v}_h
  \rangle_{\boldsymbol{V}_h^{\ast},\boldsymbol{V}_h} =
  c_h(\boldsymbol{u}_h, \boldsymbol{v}_h)$ for all
  $\boldsymbol{u}_h,\boldsymbol{v}_h \in \boldsymbol{V}_h$. Since
  $\boldsymbol{V}_h := V_h \times \bar{V}_h$, we may write
  \begin{equation*}
    C = 
    \begin{bmatrix}
      C_{11} & C_{21}^T
      \\
      C_{21} & C_{22}
    \end{bmatrix},
  \end{equation*}
  where $C_{11} : V_h \to V_h^{\ast}$,
  $C_{21} : V_h \to \bar{V}_h^{\ast}$, and
  $C_{22} : \bar{V}_h \to \bar{V}_h^{\ast}$. Associated with $C$ we
  define the operator $S_C : \bar{V}_h \to \bar{V}_h^{\ast}$ by
  \begin{equation}
    \label{eq:stokesdifS}
    S_C = C_{22} - C_{21}C_{11}^{-1}C_{21}^T.
  \end{equation}
  Replacing $A$ in \cref{eq:AuhvhgeSA} by $C$ and using that
  $\tilde{C}_{11}$ is symmetric and positive, we find:
  \begin{equation}
    \label{eq:CresultS}
    \langle C\boldsymbol{u}_h, \boldsymbol{v}_h \rangle_{\boldsymbol{V}_h^{\ast},\boldsymbol{V}_h}
    \ge \langle S_C \bar{u}_h, \bar{u}_h \rangle_{\bar{V}_h^{\ast}, \bar{V}_h}
    \quad \forall \boldsymbol{u}_h=(u_h,\bar{u}_h),\ \boldsymbol{v}_h=(v_h,\bar{v}_h) \in \boldsymbol{V}_h.
  \end{equation}
  \textbf{Step (ii).} Let $B_{11}:V_h \to Q_h^*$ and
  $B_{21}:V_h \to \bar{Q}_h^*$ be the operators such that
  $\langle (B_{11} u_h, B_{21} u_h), \boldsymbol{q}_h
  \rangle_{\boldsymbol{Q}_h^*, \boldsymbol{Q}_h}= b_h(u_h,
  \boldsymbol{q}_h)$ for all $u_h\in V_h$ and
  $\boldsymbol{q}_h \in \boldsymbol{Q}_h$. Next, Let
  $A : \boldsymbol{X}_h \to \boldsymbol{X}_h^{\ast}$ be the operator
  such that
  $\langle A(\boldsymbol{u}_h,\boldsymbol{p}_h),
  (\boldsymbol{v}_h,\boldsymbol{q}_h)\rangle_{\boldsymbol{X}_h^{\ast},\boldsymbol{X}_h}
  = a_h((\boldsymbol{u}_h,\boldsymbol{p}_h),
  (\boldsymbol{v}_h,\boldsymbol{q}_h))$. The operator $A$ can be
  ordered to have the following structure:
  \begin{equation*}
    A =
    \begin{bmatrix}
      A_{11} & A_{21}^T
      \\
      A_{21} & A_{22}
    \end{bmatrix}
    =
    \left[
      \begin{array}{c c | c c}
        C_{11} & B_{11}^T & C_{21}^T & B_{21}^T
        \\
        B_{11} & 0 & 0 & 0
        \\
        \hline
        C_{21} & 0 & C_{22} & 0
        \\
        B_{21} & 0 & 0 & 0
      \end{array}
    \right].
  \end{equation*}
  \textbf{Step (iii).} We define the preconditioner
  $P: \boldsymbol{X}_h \to \boldsymbol{X}_h^{\ast}$ and the operators
  $P^u: \boldsymbol{V}_h \to \boldsymbol{V}_h^{\ast}$ and
  $P^p: \boldsymbol{Q}_h \to \boldsymbol{Q}_h^{\ast}$ such that
  \begin{equation*}
    \begin{split}
      \langle P(\boldsymbol{u}_h,\boldsymbol{p}_h), (\boldsymbol{v}_h,\boldsymbol{q}_h)\rangle_{\boldsymbol{X}_h^{\ast},\boldsymbol{X}_h}
      &=
      ((\boldsymbol{u}_h,\boldsymbol{p}_h), (\boldsymbol{v}_h,\boldsymbol{q}_h))_{\boldsymbol{X}_h}
      \\
      &=
      \langle P^u\boldsymbol{u}_h, \boldsymbol{v}_h\rangle_{\boldsymbol{V}_h^{\ast},\boldsymbol{V}_h}
      + \langle P^p\boldsymbol{p}_h, \boldsymbol{q}_h\rangle_{\boldsymbol{Q}_h^{\ast},\boldsymbol{Q}_h}.
    \end{split}
  \end{equation*}
  We note that $P$ has the following structure:
  \begin{equation*}
    P = 
    \begin{bmatrix}
      P^u & 0 \\
      0 & P^p
    \end{bmatrix}
    =
    \begin{bmatrix}
      P^u_{11} & (P^u_{21})^T & 0 & 0
      \\
      P^u_{21} & P^u_{22} & 0 & 0
      \\
      0 & 0 & P^p_{11} & 0
      \\
      0 & 0 & 0 & P^p_{22}
    \end{bmatrix},
  \end{equation*}
  with $P_{11}^u : V_h \to V_h^{\ast}$,
  $P_{21}^u : V_h \to \bar{V}_h^{\ast}$,
  $P_{22}^u : \bar{V}_h \to \bar{V}_h$, $P^p_{11}:Q_h \to Q_h$, and
  $P^p_{22}:\bar{Q}_h \to \bar{Q}_h$. We associate with $P$ the
  operator
  $S_P : (\bar{V}_h \times \bar{Q}_h) \to (\bar{V}_h^{\ast} \times
  \bar{Q}_h^{\ast})$ defined by
  \begin{equation}
  \label{eq:StokesPreconditioner}
    S_P =
    \begin{bmatrix}
      S_{P^u} & 0
      \\
      0 & S_{P^p}
    \end{bmatrix},
  \end{equation}
  with $S_{P^u} := P_{22}^u - P_{21}^u(P_{11}^u)^{-1}(P_{21}^u)^T$ and
  $S_{P^p} := P_{22}^p$. Similar to \cref{eq:AuhvhgeSA}, we have
  \begin{equation}
    \label{eq:AuhvhgeSAStokes_a}
    \begin{split}
      \langle P^u\boldsymbol{v}_h, \boldsymbol{v}_h \rangle_{\boldsymbol{V}_h^{\ast},\boldsymbol{V}_h}
      =& \langle P_{11}^u(v_h + (P_{11}^u)^{-1}(P_{21}^u)^T\bar{v}_h), v_h + (P_{11}^u)^{-1}(P_{21}^u)^{T}\bar{v}_h \rangle_{V_h^{\ast},V_h}
      \\
      &+ \langle S_{P^u}\bar{v}_h, \bar{v}_h \rangle_{\bar{V}_h^{\ast},\bar{V}_h}
      \quad \forall \boldsymbol{v}_h \in \boldsymbol{V}_h.
    \end{split}
  \end{equation}
  \textbf{Step (iv).} Using \cref{eq:chboundStokes,eq:CresultS} we
  find
  \begin{multline}
    \label{eq:PuvhvhScvbar}
    \langle P^u\boldsymbol{v}_h, \boldsymbol{v}_h\rangle_{\boldsymbol{V}_h^{\ast},\boldsymbol{V}_h}
    = \tnorm{\boldsymbol{v}_h}_{v,S}^2
    \ge c_1^{-1} c_h(\boldsymbol{v}_h,\boldsymbol{v}_h)
    \\
    = c_1^{-1} \langle C\boldsymbol{v}_h, \boldsymbol{v}_h \rangle_{\boldsymbol{V}_h^{\ast},\boldsymbol{V}_h}
    \ge c_1^{-1} \langle S_C \bar{v}_h, \bar{v}_h \rangle_{\bar{V}_h^{\ast}, \bar{V}_h}
    \quad \forall \boldsymbol{v}_h \in \boldsymbol{V}_h.
  \end{multline}
  Therefore, choosing $v_h = -(P_{11}^u)^{-1}(P_{21}^u)^T\bar{v}_h$ in
  \cref{eq:PuvhvhScvbar} we find from \cref{eq:AuhvhgeSAStokes_a} that
  \begin{equation}
    \label{eq:SPuSCbound}
    \langle S_{P^u}\bar{v}_h, \bar{v}_h \rangle_{\bar{V}_h^{\ast},\bar{V}_h} \ge
    c_1^{-1} \langle S_C \bar{v}_h, \bar{v}_h \rangle_{\bar{V}_h^{\ast}, \bar{V}_h}
    \quad \forall \bar{v}_h \in \bar{V}_h.
  \end{equation}
  Furthermore, note that 
  \begin{equation}
    \label{eq:SPpqbqbh}
    \sum_{K \in \mathcal{T}_h} \nu^{-1}\eta^{-1}h_K\norm[0]{\bar{q}_h}_{\partial K}^2
    = \langle S_{P^p} \bar{q}_h, \bar{q}_h \rangle_{\bar{Q}_h^{\ast}, \bar{Q}_h}.
  \end{equation}
  \textbf{Step (v).} We remark that \cref{lem:mainLemmaStep2Stokes}
  in operator form reads as
  \begin{multline}
    \label{eq:leminopformStokes}
    \tnorm{(-A_{11}^{-1}A_{21}^T(\bar{v}_h,\bar{q}_h), (\bar{v}_h,\bar{q}_h))}_{\boldsymbol{X}_h}^2
    \\
    \le
    c_s \eta\del[2]{ \eta \langle S_C\bar{v}_h, \bar{v}\rangle_{V_h^{\ast},V_h}
      + \nu^{-1}\eta^{-1}\norm[0]{h_K^{1/2}\bar{q}_h}_{\partial \mathcal{T}_h}^2 }.
  \end{multline}
  Combining \cref{eq:SPuSCbound,eq:SPpqbqbh,eq:leminopformStokes},
  \begin{equation*}
    \tnorm{(-A_{11}^{-1}A_{21}^T(\bar{v}_h,\bar{q}_h), (\bar{v}_h,\bar{q}_h))}_{\boldsymbol{X}_h}^2
    \le
    c \eta^2\del[2]{ \langle S_{P^u}\bar{v}_h, \bar{v}_h \rangle_{\bar{V}_h^{\ast},\bar{V}_h}
      + \langle S_{P^p} \bar{q}_h, \bar{q}_h \rangle_{\bar{Q}_h^{\ast}, \bar{Q}_h} }.
  \end{equation*}
  Since
  $\langle S_P(\bar{v}_h,\bar{q}_h), (\bar{v}_h,\bar{q}_h)
  \rangle_{\bar{X}_h^{\ast},\bar{X}_h} =\langle S_{P^u}\bar{v}_h,
  \bar{v}_h \rangle_{\bar{V}_h^{\ast},\bar{V}_h} + \langle S_{P^p}
  \bar{q}_h, \bar{q}_h \rangle_{\bar{Q}_h^{\ast}, \bar{Q}_h} =
  (\bar{v}_h, \bar{v}_h)_{\bar{V}_h} +
  (\bar{q}_h,\bar{q}_h)_{\bar{Q}_h} =
  \norm[0]{(\bar{v}_h,\bar{q}_h)}_{\bar{X}_h}$, with
  $\bar{X}_h := \bar{V}_h \times \bar{Q}_h$, the result follows.
\end{proof}

That $S_P^{-1}$ is a parameter-robust preconditioner for the reduced
problem of the Stokes discretization \cref{eq:hdg-scheme-stokes} is
now a consequence of \cref{thm:SpparamrobSa}.

\begin{remark}
  The preconditioner defined here is different from the
  preconditioners studied in
  \cite{rhebergen2022preconditioning}. There, the preconditioners at
  the condensed level are
  \begin{equation*}
   (S_P^1)^{-1} =
   \begin{bmatrix}
     \bar{C}^{-1} & 0
     \\
     0 & (P_{22}^p)^{-1}
   \end{bmatrix}
   \qquad\text{and}\qquad
   (S_P^2)^{-1} =
   \begin{bmatrix}
     \bar{C}^{-1} & 0
     \\
     0 & (B_{21}C_{11}^{-1}B_{21}^T)^{-1}
   \end{bmatrix},
  \end{equation*}
  where $\bar{C} = -C_{21}\mathcal{P}C_{11}^{-1}C_{21}^T + C_{22}$
  with
  $\mathcal{P} = I -
  C_{11}^{-1}B_{11}^T(B_{11}C_{11}^{-1}B_{11}^T)^{-1}B_{11}$.
\end{remark}

\section{Numerical Examples}
\label{s:num}

In this section we verify that the reduced preconditioners $S_P^{-1}$,
obtained from the full preconditioners $P^{-1}$ (for the Darcy problem
defined by the inner product \cref{eq:darcyInnerProd} and for the
Stokes problem by the inner product \cref{eq:innerproductXhStokes})
are parameter-robust for the statically condensed discretization of
the Darcy problem and the Stokes problem. The discretizations and
preconditioners were implemented in Netgen/NGSolve
\cite{schoberl1997netgen,schoberl2014c++}.

We consider simulations with both exact preconditioners (in which we
use direct solvers to invert the relevant blocks in the
preconditioner) and inexact preconditioners. For the inexact
preconditioners we use a minor modification of the auxiliary space
preconditioner (ASP) of \cite{fu2021uniform,fu2023uniform} (taking
into account the problem parameters in the auxiliary problem and
modifying the `prolongation' operators specifically for the hybridized
methods used here) in combination with classical algebraic multigrid
via BoomerAMG \cite{yang2002boomeramg} through PETSc
\cite{petsc-user-ref,petsc-web-page}.

The preconditioners are combined with the conjugate gradient (CG)
method for the reduced form of the Darcy problem and minimum residual
(MINRES) method for the reduced form of the Stokes problem. For both
the Darcy and Stokes problems we choose the penalty parameter to be
$\eta = 4k^2$ in 2d and $\eta = 6k^2$ in 3d. For all examples we
choose $k=2$.

\subsection{The Darcy problem: numerical results}
\label{ss:darcyNumRes}

\subsubsection{A manufactured solution}
\label{sss:manufacturedsol}

We first consider a manufactured solution in 2d and 3d on the domain
$\Omega = (0,1)^d$, $d=2,3$. In 2d the source and boundary terms are
set such that $p = \cos(\pi x_1)\sin(\pi x_2)$ and in 3d such that
$p = \cos(\pi x_1)\sin(\pi x_2)\cos(\pi x_3)$. In
\cref{table:precon-darcy-manu} we observe that the number of
iterations for CG to reach a relative preconditioned residual
tolerance of $10^{-10}$ for $\gamma \in \cbr[0]{10^{-4},1,10^4}$ and
$\xi \in \cbr[0]{10^{-6},1}$ is independent of the discretization and
model parameters.

\begin{table}[tbp]
  \centering
  \resizebox{\textwidth}{!}{%
    \begin{tabular}{c|c|c|c|c|c|c|c|c|c}
      \hline
      \text{Cells 2D/3D} & \multicolumn{3}{|c|}{$138/455$} & \multicolumn{3}{|c|}{$610/3804$} & \multicolumn{3}{|c}{$2400/24892$} \\
      \hline
      $\gamma$ & $10^{4}$ & $10^{0}$ & $10^{-4}$ & $10^{4}$ & $10^{0}$ & $10^{-4}$ &  $10^{4}$ & $10^{0}$ & $10^{-4}$\\
      \hline\hline      
      \multicolumn{10}{c}{$\xi = 10^0$}\\
      \hline
      2D & 29 (33) & 32 (40) & 32 (40) & 31 (34) & 31 (41) & 31 (41)  & 33 (36) & 31 (42) & 31 (42) \\
      \hline
      3D & 35 (37) & 45 (55) & 45 (55) & 42 (48) & 48 (60) & 48 (60) &  50 (54) & 52 (65) & 52 (65) \\
      \hline
      \multicolumn{10}{c}{$\xi = 10^{-6}$}\\
      \hline
      2D & 28 (28) & 28 (28) & 32 (39) & 28 (28) & 28 (28) & 32 (41)  & 30 (30) & 29 (30) & 32 (42) \\
      \hline
      3D & 34 (34) & 34 (34) & 44 (52) & 39 (39) & 39 (39) & 48 (59) &  43 (43) & 43 (43) & 52 (64) \\
      \hline
    \end{tabular}}
  \caption{The Darcy problem with manufactured solution (see
    \cref{sss:manufacturedsol}). Number of iterations required for CG
    to reach a relative preconditioned residual tolerance of
    $10^{-10}$ for different values of $\xi$ and $\gamma$. In the 2d
    and 3d columns, numbers shown in parantheses are obtained using
    the inexact preconditioner while the remaining numbers are
    obtained using the exact preconditioner.}
  \label{table:precon-darcy-manu}
\end{table}

\subsubsection{Piecewise constant reaction parameter}
\label{sss:piecewiseconstant}

Next, we consider a more challenging problem, in which we consider a
piecewise constant reaction coefficient $\gamma$. In 2d we define
$\Omega = (0,1)^2$ with subdomains $\Omega_1 = (0.3,0.7)^2$ and
$\Omega_2 = \Omega \backslash \Omega_1$. In 3d we define
$\Omega = (0,1)^3$ with subdomains $\Omega_1 = (0.3,0.7)^3$ and
$\Omega_2 = \Omega \backslash \Omega_1$. We set
$\xi = 1 + \sum_{i=1}^d(x_i-0.5)^2$ and $\gamma=\gamma_i$ on
$\Omega_i$ with $(\gamma_1,\gamma_2) = (1,10^{4})$. We choose $f=1$
and homogeneous Dirichlet boundary conditions. In
\cref{table:darcy-pw-constant} we observe robustness for the exact and
inexact preconditioners in two and three dimensions.

\begin{table}[tbp]
  \centering
  \begin{tabular}{c|c|c|c|c|c}
    \hline
    Cells (2d) & 138 & 608 & 2382 & 9512 & 37938 \\
    \hline
    Iterations (2d) & 30 (37) & 31 (42) & 31 (41) & 31 (41) & 30 (41) \\
    \hline
    \hline
    Cells (3d) & 53 & 455 & 3804 & 24892 & 194816 \\
    \hline
    Iterations (3d) & 34 (34) & 35 (36) & 43 (54) & 47 (59) & 49 (63) \\
    \hline
  \end{tabular}
  \caption{The Darcy problem with piecewise constant reaction
    parameter (see \cref{sss:piecewiseconstant}). Number of iterations
    required for CG to reach a relative preconditioned residual
    tolerance of $10^{-10}$. The numbers shown in parantheses are
    obtained using the inexact preconditioner while the remaining
    numbers are obtained using the exact preconditioner.}
  \label{table:darcy-pw-constant}
\end{table}

\subsubsection{When the conditions in \cref{thm:SpparamrobSa} are not satisfied}
\label{sss:notsatisfied}

In this section we demonstate that a robust preconditioner for the
full system does not imply that the reduced preconditioner is also
robust for the reduced problem. For this we consider the
preconditioner associated with the inner product
\cref{eq:darcyInnerProdalt}. As test case we consider the 2d
manufactured solution from \cref{sss:manufacturedsol} with
$\kappa = \gamma = 1$. We use MINRES to solve the full system with
preconditioner $(P')^{-1}$ and CG to solve the reduced problem with
preconditioner $S_{P'}^{-1}$. In \cref{tab:counter-example} we list
the number of iterations to reach a relative preconditioned residual
tolerance of $10^{-10}$. We observe that while the preconditioner is
robust for the full system, its reduced version is not robust for the
system obtained after static condensation.

\begin{table}[tbp]
  \centering
  \begin{tabular}{c|c|c|c|c|c|c}
    \hline
    Cells & 138 & 608 & 2382 & 9512 & 37938 & 151526 \\
    \hline
    MINRES with $(P')^{-1}$  & 14 & 14 & 13 & 12 & 11 & 12 \\
    \hline
    CG with $S_{P'}^{-1}$ & 129 & 236 & 452 & 758 & $>999$ & $>999$ \\
    \hline
  \end{tabular}
  \caption{The test case from \cref{sss:notsatisfied}. Number of
    iterations, using MINRES with $(P')^{-1}$ for the full system and
    CG with $S_{P'}^{-1}$ for the reduced system, to reach a relative
    preconditioned residual tolerance of $10^{-10}$.}
  \label{tab:counter-example}
\end{table}

\subsection{The Stokes problem: numerical results}
\label{ss:stokesNumRes}

In addition to the preconditioner studied in \cref{ss:stokes}, we also
consider two norm-equivalent preconditioners (see
\cref{rem:SPhinnerprod}). In particular, we consider the
preconditioners $P_{\zeta}^{-1}$ and $\widehat{P}_{\zeta}^{-1}$
associated with the inner product obtained by replacing
$(\boldsymbol{u}_h, \boldsymbol{v}_h)_{v,S}$ in
\cref{eq:innerproductXhStokes} by
\begin{equation*}
  \begin{split}
    (\boldsymbol{u}_h, \boldsymbol{v}_h)_{v,S,\zeta}
    &:= (\boldsymbol{u}_h, \boldsymbol{v}_h)_{v,S}
    + \zeta (\nabla \cdot u_h, \nabla \cdot v_h)_{\mathcal{T}_h},
    \\
    \widehat{c}_h(\boldsymbol{u}_h,\boldsymbol{v}_h)
    &:=
    c_h(\boldsymbol{u}_h,\boldsymbol{v}_h)
    + \zeta(\nabla \cdot u_h, \nabla \cdot v_h)_{\mathcal{T}_h},    
  \end{split}
\end{equation*}
respectively, for nonnegative parameter $\zeta$. Note that $P_0 = P$.

\subsubsection{A manufactured solution}
\label{sss:manufacturedsolStokes}

We consider a manufactured solution in the domain
$\Omega = (0,1)^d$, $d=2,3$. In 2d the source and boundary terms are
set such that
\begin{equation*}
  u =
  \begin{bmatrix}
    \sin(\pi x_1) \sin(\pi x_2) \\
    \cos(\pi x_1) \cos(\pi x_2)    
  \end{bmatrix},
  \quad
  p = \sin(\pi x_1) \cos(\pi x_2),
\end{equation*}
and in 3d the manufactured solution is given by
\begin{equation*}
  u =
  \begin{bmatrix}
    \pi \sin(\pi x_1) \cos(\pi x_2) - \pi \sin(\pi x_1) \cos(\pi x_3)  \\
    \pi \sin(\pi x_2) \cos(\pi x_3) - \pi \sin(\pi x_2) \cos(\pi x_1)  \\
    \pi \sin(\pi x_3) \cos(\pi x_1) - \pi \sin(\pi x_3) \cos(\pi x_2)
  \end{bmatrix},
  \quad
  p = \cos(\pi x_1) \sin(\pi x_2) \cos(\pi x_3).
\end{equation*}
In \cref{table:precon1-stokes} we present the number of iterations
required for preconditioned MINRES, using preconditioners
$P^{-1}=P_0^{-1}$, $\widehat{P}_0^{-1}$, and $\widehat{P}_{100}^{-1}$
to reach a relative preconditioned residual tolerance of $10^{-8}$. In
2d we observe that the exact and inexact preconditioners are
discretization and model parameter-robust. In 3d the exact
preconditioners are robust with respect to the mesh size, but the
iteration counts show some minor sensitivity regarding $\nu$. For the
inexact preconditioners in 3d we observe some discretization and model
parameter sensitivity, but variations are small.

\begin{table}[tbp]
  \centering
  \resizebox{\textwidth}{!}{%
  \begin{tabular}{c|c|c|c|c||c|c|c|c}
    \hline
    &\multicolumn{4}{|c||}{$\nu = 1$} & \multicolumn{4}{|c|}{$\nu = 1e-6$} \\
    \hline
    Cells & $P^{-1}$ & $P_{100}^{-1}$ & $\widehat{P}_0^{-1}$ & $\widehat{P}_{100}^{-1}$ & $P^{-1}$ & 
    $P_{100}^{-1}$ & $\widehat{P}_0^{-1}$ & $\widehat{P}_{100}^{-1}$\\
    \hline
    & \multicolumn{8}{c}{2d problem} \\
    \hline
    138 & 92 (121) & 58 (111) & 83 (122) & 41 (113) & 100 (133) & 63 (113) & 88 (129) & 42 (115) \\
    608 & 90 (129) & 58 (110) & 83 (130) & 41 (114) & 93 (137) & 62 (112) & 84 (134) & 41 (113) \\
    2382 & 90 (132) & 58 (112) & 84 (133) & 41 (115) & 92 (140) & 60 (113) & 83 (136) & 40 (114) \\
    9512 & 87 (134) & 58 (115) & 83 (135) & 41 (117) & 88 (143) & 59 (114) & 81 (139) & 40 (115) \\
    37938 & 86 (134) & 58 (113) & 81 (136) & 40 (117) & 88 (143) & 58 (114) & 79 (139) & 39 (115) \\
    151462 & 86 (136) & 56 (116) & 81 (133) & 40 (119) & 85 (145) & 55 (116) & 79 (141) & 37 (117) \\
    \hline
    & \multicolumn{8}{c}{3d problem} \\
    \hline
    48 & 110 (124)  & 66 (75) & 103 (118) & 47 (58) & 141 (154) & 87 (99) & 127 (141) & 60 (76)\\
    384 & 130 (149) & 69 (115) & 124 (148) & 46 (114) & 165 (187) & 91 (145) & 158 (180) & 60 (141)\\
    3072 & 134 (165) & 69 (137) & 129 (169) & 46 (144) & 165 (203) & 90 (171) & 161 (206) & 60 (175)\\
    24310 & 130 (171) & 68 (146) & 129 (176) & 45 (151) & 161 (216) & 88 (179) & 157 (220) & 58 (183)\\
    196608 & 128 (169) & 65 (142) & 125 (178) & 44 (150) & 156 (218) & 85 (177) & 151 (218) & 57 (181) \\
    \hline
  \end{tabular}
  }
  \caption{The Stokes problem with manufactured solution (see
    \cref{sss:manufacturedsolStokes}).  Number of iterations required
    for preconditioned MINRES to reach a relative preconditioned
    residual tolerance of $10^{-8}$ for different values of $\nu$. We
    compare the performance of the preconditioners $P^{-1}=P_0^{-1}$,
    $\widehat{P}_0^{-1}$, and $\widehat{P}_{100}^{-1}$ for the Stokes
    discretization \cref{eq:hdg-scheme-stokes}. The performance of the
    inexact preconditioners are shown in parentheses while the
    remaining numbers are obtained using the exact preconditioner.}
\label{table:precon1-stokes}
\end{table}

\subsubsection{Lid-driven cavity problem}
\label{sss:liddrivencavity}

We now consider the lid-driven cavity problems in 2d
($\Omega = (-1,1)^2$, $u=1-x_1^4$ on the boundary $x_2=1$ and $u=0$
elsewhere), and in 3d ($\Omega = (0,1)^3$,
$u=(1-\tau_1^4,(1-\tau_2)^4/10,0)$ with $\tau_i=2x_i-1$ on the
boundary $x_3=1$, and $u=0$ elsewhere). In
\cref{table:liddriven-stokes} we list the number of MINRES iterations
needed to reach a relative preconditioned residual tolerance of
$10^{-8}$ for varying mesh size $h$ and viscosity parameter
$\nu$. Both in 2d and 3d we observe robustness of all preconditioners
with respect to mesh size and $\nu$.

\begin{table}[tbp]
  \centering
  \resizebox{\textwidth}{!}{%
  \begin{tabular}{c|c|c|c|c||c|c|c|c}
    \hline
    &\multicolumn{4}{|c||}{$\nu = 1$} & \multicolumn{4}{|c|}{$\nu = 1e-6$} \\
    \hline
    Cells & $P^{-1}$ & $P_{100}^{-1}$ & $\widehat{P}_0^{-1}$ & $\widehat{P}_{100}^{-1}$ & $P^{-1}$ & 
    $P_{100}^{-1}$ & $\widehat{P}_0^{-1}$ & $\widehat{P}_{100}^{-1}$\\
    \hline
    & \multicolumn{8}{c}{2d problem} \\
    \hline
    138 & 101 (134) & 60 (119) & 89 (135) & 41 (122) & 101 (134) & 60 (119) & 89 (135) & 41 (121) \\
    608 & 101 (141) & 60 (122) & 89 (142) & 41 (125) & 101 (141) & 60 (122) & 89 (142) & 41 (125) \\
    2388 & 98 (144) & 58 (124) & 90 (145) & 41 (126) & 98 (144) & 58 (123) & 90 (145) & 41 (126) \\
    9538 & 97 (146) & 56 (124) & 87 (146) & 41 (126) & 97 (146) & 57 (124) & 87 (146) & 39 (126) \\
    37880 & 94 (146) & 55 (123) & 87 (146) & 39 (127) & 94 (146) & 55 (122) & 87 (146) & 39 (127) \\
    151472 & 93 (148) & 53 (124) & 83 (148) & 39 (128) & 93 (148) & 55 (124) & 83 (148) & 37 (128) \\
    \hline
    & \multicolumn{8}{c}{3d problem} \\
    \hline
    48 & 160 (167) & 91 (101) & 146 (159) & 63 (76) & 160 (167) & 91 (103) & 146 (159) & 65 (78)\\
    384 & 182 (206) & 94 (156) & 176 (204) & 63 (152) & 182 (207) & 94 (158) & 176 (204) & 62 (156)\\
    3072 & 182 (222) & 93 (187) & 178 (231) & 62 (187) & 182 (222) & 93 (186) & 178 (231) & 62 (195) \\
    24310 & 181 (232) & 91 (194) & 177 (241) & 61 (203) & 181 (232) & 91 (194) & 177 (241) & 60 (203) \\
    196608 & 177 (235) &  90 (195) & 173 (243) & 59 (202) & 177 (235) & 90 (195) & 173 (243) & 59 (202)\\
    \hline
  \end{tabular}
  }
  \caption{The lid-driven cavity problem (see
    \cref{sss:liddrivencavity}).  Number of iterations required for
    preconditioned MINRES to reach a relative preconditioned residual
    tolerance of $10^{-8}$ for different values of $\nu$. We compare
    the performance of the preconditioners $P^{-1}=P_0^{-1}$,
    $\widehat{P}_0^{-1}$, and $\widehat{P}_{100}^{-1}$ for the Stokes
    discretization \cref{eq:hdg-scheme-stokes}. The performance of the
    inexact preconditioners are shown in parentheses while the
    remaining numbers are obtained using the exact preconditioner.}
\label{table:liddriven-stokes}
\end{table}

\section{Conclusions}
\label{s:conclusions}

In this paper we presented an approach to construct parameter-robust
preconditioners for the reduced form of a hybridizable
discretization. Having defined a preconditioner for the discretization
before static condensation, we presented conditions that must be
satisfied to guarantee parameter-robustness of the reduced form of the
preconditioner for the reduced form of the discretization. We applied
this approach to determine new parameter-robust preconditioners for
the Darcy and Stokes problems. Numerical examples in two and three
dimensions verified our analysis.

\bibliographystyle{plain}
\bibliography{references}
\appendix
\section{The Darcy problem: Proofs}

The following inf-sup condition was shown in \cite[Lemma
4]{kraus2021uniformly}. There exists a uniform constant $\beta > 0$
such that
\begin{equation}
  \label{eq:inf-sup-H1}
  \inf_{\boldsymbol{0} \ne \boldsymbol{q}_h \in \boldsymbol{Q}_h^0}
  \sup_{ 0 \ne v_h \in V_h} \frac{b_h(v_h, \boldsymbol{q}_h)}{\norm[0]{v_h}_{\mathcal{T}_h} \tnorm{\boldsymbol{q}_h}_p}
  \ge \beta.
\end{equation}
This result will be used in the proofs in this appendix.

\subsection{Proof of \cref{lem:cbciDarcy}}
\label{ss:Proof_cbciDarcy}

We first prove \cref{eq:wpconditions_b}. Let
$(u_h,\boldsymbol{p}_h), (v_h,\boldsymbol{q}_h) \in
\boldsymbol{X}_h$. Applying the Cauchy--Schwarz inequality we find:
\begin{equation}
\label{eq:kappainvuvgampq}
    \xi^{-1}(u_h,v_h)_{\mathcal{T}_h} \le \tnorm{u_h}_{v,D}\tnorm{v_h}_{v,D},
    \quad
    \gamma(p_h,q_h)_{\mathcal{T}_h} \le \tnorm{\boldsymbol{p}_h}_{q,D}\tnorm{\boldsymbol{q}_h}_{q,D}.    
\end{equation}
Next, applying integration by parts, the Cauchy--Schwarz inequality,
and a discrete trace inequality \cite[Lemma 1.46]{di2011mathematical},
we find:
\begin{equation*}
  \begin{split}
    b_h(&u_h,\boldsymbol{q}_h)
    = (u_h, \nabla q_h)_{\mathcal{T}_h} + \langle \bar{q}_h - q_h, u_h \cdot n \rangle_{\partial \mathcal{T}_h}
    \\
    &\le \xi^{-1/2} \norm[0]{u_h}_{\mathcal{T}_h} \xi^{1/2} \norm[0]{\nabla q_h}_{\mathcal{T}_h}
    + C \eta^{-1/2} \norm[0]{(\xi \eta h_K^{-1})^{1/2} (\bar{q}_h - q_h)}_{\partial \mathcal{T}_h}
    \xi^{-1/2} \norm[0]{u_h}_{\mathcal{T}_h},
  \end{split}
\end{equation*}
from which we find
\begin{equation}
  \label{eq:boundbhuhqh}
  b_h(u_h,\boldsymbol{q}_h) \le C(1+\eta^{-1})^{1/2} \tnorm{u_h}_{v,D}\tnorm{\boldsymbol{q}_h}_{q,D}.
\end{equation}
A similar estimate holds for $b_h(v_h,\boldsymbol{p}_h)$. Combining
\cref{eq:kappainvuvgampq,eq:boundbhuhqh}, and since $\eta > 1$, we
find that the constant $c_b$ in \cref{eq:wpconditions_b} is uniform.

We now prove \cref{eq:wpconditions_i}. We note that if for all
$(u_h,\boldsymbol{p}_h) \in \boldsymbol{X}_h$ we can find
$(v_h,\boldsymbol{q}_h) \in \boldsymbol{X}_h$ depending on
$(u_h,\boldsymbol{p}_h)$ such that
\begin{equation}
  \label{eq:darcy22b}
    \tnorm{(v_h,\boldsymbol{q}_h)}_{\boldsymbol{X}_h} \le c_1 \tnorm{(u_h,\boldsymbol{p}_h)}_{\boldsymbol{X}_h},
    \quad
    a_h((u_h, \boldsymbol{p}_h), (v_h, \boldsymbol{q}_h)) \ge c_2 \tnorm{(u_h,\boldsymbol{p}_h)}_{\boldsymbol{X}_h}^2,
\end{equation}
for positive uniform constants $c_1,c_2$, then
\cref{eq:wpconditions_i} holds with $c_i>0$ a uniform constant. Thanks
to \cref{eq:inf-sup-H1}, given a
$\boldsymbol{p}_h \in \boldsymbol{Q}_h^0$ there exists a
$\tilde{u}_h \in V_h$ such that
\begin{equation}
  \label{eq:bhutphutc1ph}
  b_h(\tilde{u}_h,\boldsymbol{p}_h) = \tnorm{\boldsymbol{p}_h}_p^2
  \qquad \text{and} \qquad
  \norm[0]{\tilde{u}_h}_{\mathcal{T}_h} \le c \tnorm{\boldsymbol{p}_h}_p,
\end{equation}
where $c>0$ is a constant only depending on $\beta$. Define
$v_h := u_h + \delta \xi \tilde{u}_h$ and
$\boldsymbol{q}_h := \boldsymbol{p}_h$, where $\delta>0$ is a constant
that will be determined below. Applying \cref{eq:bhutphutc1ph} we find:
\begin{equation}
  \label{eq:boundvhvD}
    \tnorm{v_h}_{v,D}^2
    \le 2 \tnorm{u_h}_{v,D}^2 + 2\delta^2\xi \norm[0]{\tilde{u}_h}_{\mathcal{T}_h}^2
    \le 2 \tnorm{u_h}_{v,D}^2 + 2\delta^2c^2 \tnorm{\boldsymbol{p}_h}_{q,D}^2.
\end{equation}
\Cref{eq:boundvhvD} together with
$\boldsymbol{q}_h := \boldsymbol{p}_h$ now implies the first inequality in
\cref{eq:darcy22b}. Next, applying \cref{eq:bhutphutc1ph} and the
Cauchy--Schwarz and Young's inequalities, we find:
\begin{align*}
  &a_h((u_h, \boldsymbol{p}_h), (v_h, \boldsymbol{q}_h))
    = \xi^{-1} \norm[0]{u_h}_{\mathcal{T}_h}^2
    + \delta (u_h, \tilde{u}_h)_{\mathcal{T}_h}
    + \delta \xi b_h(\tilde{u}_h, \boldsymbol{p}_h)
    + \gamma \norm[0]{r_h}_{\mathcal{T}_h}^2
  \\
  &\quad \ge \xi^{-1} \norm[0]{u_h}_{\mathcal{T}_h}^2
    - \frac{\delta c^2 \xi^{-1}}{2} \norm[0]{u_h}_{\mathcal{T}_h}^2
    - \frac{\delta \xi}{2} \tnorm{\boldsymbol{p}_h}_p^2
    + \delta \xi \tnorm{\boldsymbol{p}_h}_p^2
    + \gamma \norm[0]{r_h}_{\mathcal{T}_h}^2.
\end{align*}
Choosing $\delta = 1/c^2$ we obtain the second inequality in \cref{eq:darcy22b}.

\subsection{Proof of \cref{lem:mainLemmaStep2Darcy}}
\label{ss:Proof_mainLemmaStep2Darcy}

Before proving \cref{lem:mainLemmaStep2Darcy} we require the following
preliminary results. Recall the definition of $m_K$ in
\cref{ss:prelim}.

\begin{lemma}
  \label{lem:previous-condensed-estimate-Darcy}
  Let $l_u(\cdot)$ and $l_p(\cdot)$ be as defined in
  \cref{lem:weak-reduced-darcy}. Given $\bar{q}_h \in \bar{Q}_h^0$,
  there exists a uniform constant $c>0$ such that
  \begin{multline*}
    \xi^{-1} \norm[0]{l_u(\bar{q}_h)}_{K}^2
    + \gamma \norm[0]{l_p(\bar{q}_h)}_{K}^2
    \\
    \le c \del[1]{ \xi h_K^{-1} \norm[0]{\bar{q}_h - m_K(\bar{q}_h) }_{\partial K}^2
    + \min \cbr[0]{\gamma, h_K^{-2}\xi} \norm[0]{m_K(\bar{q}_h)}_{K}^2 }.
  \end{multline*}
\end{lemma}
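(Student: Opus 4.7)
The plan is to start from the energy identity obtained by testing the local solver \cref{eq3:localsolver-darcy} with $(v_h,q_h)=(l_u(\bar{q}_h),l_p(\bar{q}_h))$. For this choice, the mixed divergence terms $-(l_p,\nabla\cdot l_u)_K$ and $(l_p,\nabla\cdot l_u)_K$ cancel, and since the source $s$ is zero one obtains
\begin{equation*}
\xi^{-1}\norm[0]{l_u(\bar{q}_h)}_K^2 + \gamma\norm[0]{l_p(\bar{q}_h)}_K^2 = -\langle \bar{q}_h,\, l_u(\bar{q}_h)\cdot n\rangle_{\partial K}.
\end{equation*}
Thus the entire task reduces to bounding the boundary pairing on the right by the claimed expression, with the understanding that Young's inequality will be used to absorb residual copies of the left-hand side.

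The key idea is to split $\bar{q}_h = (\bar{q}_h-m_K(\bar{q}_h)) + m_K(\bar{q}_h)$ on $\partial K$ and treat the two contributions separately. The oscillation part is handled by Cauchy--Schwarz, the discrete trace inequality $\norm[0]{l_u\cdot n}_{\partial K}\le Ch_K^{-1/2}\norm[0]{l_u}_K$ (which is standard for polynomials and already cited in the paper), and a weighted Young's inequality; this yields a term bounded by $C\xi h_K^{-1}\norm[0]{\bar{q}_h-m_K(\bar{q}_h)}_{\partial K}^2 + \tfrac14\xi^{-1}\norm[0]{l_u}_K^2$, where the second summand is absorbed back into the left-hand side.

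For the mean part I would pull out the constant $m_K(\bar{q}_h)$ and invoke the divergence theorem to rewrite the boundary integral as $m_K(\bar{q}_h)\int_K\nabla\cdot l_u\,dx$. To reach the $\min\cbr[0]{\gamma,h_K^{-2}\xi}$ constant, this quantity must be estimated in two complementary ways. First, testing the local solver with $(0,q_h)$ for all $q_h\in Q(K)$ forces $(\nabla\cdot l_u + \gamma l_p,q_h)_K=0$ and, since $\nabla\cdot l_u + \gamma l_p\in\mathbb{P}_{k-1}(K)$, this gives the pointwise identity $\nabla\cdot l_u = -\gamma l_p$; Cauchy--Schwarz and Young then produce $C\gamma\norm[0]{m_K(\bar{q}_h)}_K^2 + \tfrac{\gamma}{2}\norm[0]{l_p}_K^2$. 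Second, the inverse inequality $\norm[0]{\nabla\cdot l_u}_K\le Ch_K^{-1}\norm[0]{l_u}_K$ with Cauchy--Schwarz and Young produces $Ch_K^{-2}\xi\norm[0]{m_K(\bar{q}_h)}_K^2 + \tfrac14\xi^{-1}\norm[0]{l_u}_K^2$. Selecting whichever bound has the smaller leading coefficient according to the sign of $\gamma - h_K^{-2}\xi$, and absorbing the residual $l_p$ or $l_u$ terms into the left-hand side of the energy identity, delivers the claim.

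The main obstacle is recognising that the $\min\cbr[0]{\gamma,h_K^{-2}\xi}$ factor requires this selective use of two independent estimates of $\int_K\nabla\cdot l_u$: the first relies on the constitutive relation between $l_u$ and $l_p$ induced by the local solver and is sharp when $\gamma$ is small, whereas the second is an inverse estimate on $l_u$ that does not see $\gamma$ at all and is sharp in the reactive-dominated regime. Once this dichotomy is in place the remainder is a routine combination of Cauchy--Schwarz, discrete trace and inverse inequalities, and Young's inequality with suitably tuned weights.
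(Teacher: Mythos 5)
Your proof is correct, but it takes a genuinely different and more economical route than the paper's. The paper introduces the $\xi^{-1}$-weighted $L^2$-orthogonal projection $J$ onto the divergence-free subspace $V^0(K)$, decomposes $l_u(\bar{q}_h) = Jl_u + (I-J)l_u$ orthogonally, and additionally splits the boundary datum by linearity of the local solver into $z^0 = l_u(m_K(\bar{q}_h))$ and $z' = l_u(\bar{q}_h - m_K(\bar{q}_h))$; it then bounds $Jl_u$, $(I-J)z^0$, $(I-J)z'$ and the corresponding pressures separately, using that $\langle m_K(\bar{q}_h), Jl_u\cdot n\rangle_{\partial K}=0$ for divergence-free fields. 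You instead work from the single energy identity $\xi^{-1}\norm[0]{l_u}_K^2 + \gamma\norm[0]{l_p}_K^2 = -\langle \bar{q}_h, l_u\cdot n\rangle_{\partial K}$ and split only the boundary datum inside the pairing, which avoids the projection machinery entirely. The two arguments ultimately rest on the same pair of facts to produce the $\min\cbr[0]{\gamma, h_K^{-2}\xi}$ factor: the pointwise relation $\nabla\cdot l_u = -\gamma l_p$ (valid because $\nabla\cdot l_u + \gamma l_p \in \mathbb{P}_{k-1}(K)$ is $L^2$-orthogonal to $Q(K)$) and an inverse estimate on $\nabla\cdot l_u$, each paired with Young's inequality and absorption. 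Your bookkeeping checks out: the residual $\tfrac{1}{4}\xi^{-1}\norm[0]{l_u}_K^2$ terms (at most two of them) and the $\tfrac{\gamma}{2}\norm[0]{l_p}_K^2$ term are absorbable into the left-hand side, and taking the better of the two resulting inequalities yields the stated minimum. What the paper's heavier decomposition buys is the isolated control of the divergence-free component of the lifted velocity, which is finer information than the lemma requires; for the statement as given, your argument is shorter and equally rigorous.
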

\begin{proof}
  For this proof we adapt results from \cite[Section
  6]{cockburn2005error}.
  \\
  \textbf{Step (i).} Given $K \in \mathcal{T}_h$, we define
    $V^0(K) := \cbr[1]{ v \in V(K):\, \nabla\cdot v = 0 }$
  where $V(K) := [\mathbb{P}_k(K)]^d$. We define $J$ as the
  orthogonal projection into $V^0(K)$: for given $v_h \in V_h$,
  \begin{equation}
    \label{eq:operator-J}
    \xi^{-1} (J v_h, w_h)_K
    = \xi^{-1} (v_h, w_h)_K \qquad \forall w_h \in V^0(K).
  \end{equation}
  Given $\bar{q}_h \in \bar{Q}_h^0$, by the orthogonality of $J$, we
  have
  \begin{equation}
    \label{eq:aux1-previous-condensed-estimate-Darcy}
    \xi^{-1} \norm[0]{l_u(\bar{q}_h)}_K^2
    = \xi^{-1} \norm[0]{J l_u(\bar{q}_h)}_K^2
    + \xi^{-1} \norm[0]{l_u(\bar{q}_h) - J l_u(\bar{q}_h)}_K^2.
  \end{equation}
  \textbf{Step (ii).}  To bound the first term on the right hand side
  of \cref{eq:aux1-previous-condensed-estimate-Darcy}, we first note
  that the local problem in \cref{def:locSolverDarcy} with $q_h = 0$
  gives
  \begin{equation*}
      \xi^{-1}(l_u(\bar{q}_h),v_h)_K - (l_p(\bar{q}_h),\nabla \cdot v_h)_K
      = -\langle \bar{q}_h, v_h \cdot n \rangle_{\partial K} 
      \quad
      \forall v_h \in V(K).
  \end{equation*}
  Choosing $v_h = Jl_u(\bar{q}_h) \in V^0(K)$, using that
  $\nabla \cdot J l_u(\bar{q}_h) = 0$ and \cref{eq:operator-J}, we
  obtain
  \begin{equation*}
    \xi^{-1}\norm[0]{J l_u(\bar{q}_h)}_K^2
    = - \langle \bar{q}_h, J l_u(\bar{q}_h) \cdot n\rangle_{\partial K}.
  \end{equation*}
  Then, by the identity
  \begin{equation*}
    \langle m_K(\bar{q}_h), J l_u(\bar{q}_h)\cdot n\rangle_{\partial
      K}=(\nabla m_K(\bar{q}_h), Jl_u(\bar{q}_h))_K + (m_K(\bar{q}_h),
    \nabla \cdot J l_u(\bar{q}_h))_K = 0,
  \end{equation*}
  the Cauchy--Schwarz inequality, and a discrete trace inequality
  \cite[Lemma 1.46]{di2011mathematical},
  \begin{align*}
    \xi^{-1} \norm[0]{J l_u(\bar{q}_h)}_K^2
    & = - \langle \bar{q}_h - m_K(\bar{q}_h), J l_u(\bar{q}_h)\cdot n\rangle_{\partial K}
    \\
    & \le C \xi^{1/2} h_K^{-1/2} \norm[0]{\bar{q}_h - m_K(\bar{q}_h)}_{\partial K} \xi^{-1/2} \norm[0]{J l_u(\bar{q}_h)}_K, 
  \end{align*}
  from which we find
  \begin{equation}
    \label{eq:aux2-previous-condensed-estimate-Darcy}
    \xi^{-1} \norm[0]{J l_u(\bar{q}_h)}_K^2
    \le C \xi h_K^{-1} \norm[0]{\bar{q}_h - m_K(\bar{q}_h)}_{\partial K}^2.
  \end{equation}
  \textbf{Step (iii).} To bound the second term of
  \cref{eq:aux1-previous-condensed-estimate-Darcy}, we define
  $z^0 := l_u(m_K(\bar{q}_h))$,
  $z' := l_u(\bar{q}_h - m_K(\bar{q}_h))$,
  $p^0 := l_p(m_K(\bar{q}_h))$,
  $p' := l_p(\bar{q}_h - m_K(\bar{q}_h))$. We note that
  $z^0 = u_h^L(m_K(\bar{q}_h), 0)$ and
  $p^0 = p_h^L(m_K(\bar{q}_h), 0)$, see
  \cref{def:locSolverDarcy}. Therefore, choosing $v_h = (I-J)z^0$ and
  $q_h = p^0$ in \cref{eq3:localsolver-darcy}, using the definition of
  $J$, and integration by parts, we find:
  \begin{equation}
    \label{eq:IminJz0p0}
    \xi^{-1} \norm[0]{(I - J)z^0}_K^2
    + \gamma \norm[0]{p^0}_K^2 
    = - \langle m_K(\bar{q}_h), (I - J)z^0 \cdot n \rangle_{\partial K}
    = - (m_K(\bar{q}_h), \nabla \cdot (I - J) z^0)_K.
  \end{equation}
  Noting that
  $(m_K(\bar{q}_h), \nabla \cdot (I - J) z^0)_K = (m_K(\bar{q}_h),
  \nabla \cdot z^0)_K$ and that $\nabla \cdot z^0 = - \gamma p^0$, and
  using Young's inequality, we find from \cref{eq:IminJz0p0}
  \begin{equation}
    \label{eq:aux3-previous-condensed-estimate-Darcy}
    \xi^{-1} \norm[0]{(I - J)z^0}_K^2
    + \gamma \norm[0]{p^0}_K^2 
    \le C \gamma \norm[0]{m_K(\bar{q}_h)}_{K}^2 .
  \end{equation}
  Alternatively, using an inverse inequality, we find from
  \cref{eq:IminJz0p0}
  \begin{equation*}
    \begin{split}
      \xi^{-1} \norm[0]{(I - J)z^0}_K^2
      + \gamma \norm[0]{p^0}_K^2 
      &= - (m_K(\bar{q}_h), \nabla \cdot (I - J) z^0)_K
      \\
      &\le C h_K^{-1} \norm[0]{m_K(\bar{q}_h)}_K \norm[0]{(I-J)z^0}_K.        
    \end{split}
  \end{equation*}
  By Young's inequality we then obtain:
  \begin{equation}
    \label{eq:aux3b-previous-condensed-estimate-Darcy}
    \xi^{-1} \norm[0]{(I - J)z^0}_K^2
    + \gamma \norm[0]{p^0}_K^2 
    \le C h_K^{-2} \xi \norm[0]{m_K(\bar{q}_h)}_K^2.
  \end{equation}
  \textbf{Step (iv).} We note that
  $z' = u_h^L(\bar{q}_h - m_K(\bar{q}_h), 0)$ and
  $p' = p_h^L(\bar{q}_h - m_K(\bar{q}_h), 0)$. Therefore, choosing
  $v_h = (I-J)z'$ and $q_h = p'$ in \cref{eq3:localsolver-darcy},
  applying the Cauchy--Schwarz inequality, a discrete trace inequality
  and Young's inequality, we find
  \begin{equation}
    \label{eq:aux4-previous-condensed-estimate-Darcy}
    \xi^{-1} \norm[0]{(I - J)z'}_K^2
    + \gamma \norm[0]{p'}_K^2 
    \le C \xi h_K^{-1} \norm[0]{\bar{q}_h - m_K(\bar{q}_h)}_{\partial K}^2.
  \end{equation}
  \textbf{Step (v).} Since $l_u(\bar{q}_h) = z^0 + z'$ and
  $l_p(\bar{q}_h) = p^0 + p'$, we find from
  \cref{eq:aux1-previous-condensed-estimate-Darcy,eq:aux2-previous-condensed-estimate-Darcy,eq:aux3-previous-condensed-estimate-Darcy,eq:aux3b-previous-condensed-estimate-Darcy,eq:aux4-previous-condensed-estimate-Darcy},
  \begin{align*}
    &\xi^{-1}\norm[0]{l_u(\bar{q}_h)}_K^2
      + \gamma \norm[0]{l_p(\bar{q}_h)}_K^2
      = \xi^{-1} \norm[0]{J l_u(\bar{q}_h)}_K^2
      + \xi^{-1} \norm[0]{(I - J)l_u(\bar{q}_h)}_K^2
      + \gamma \norm[0]{l_p(\bar{q}_h)}_K^2
    \\
    & \quad 
      \le C \del[1]{ \xi^{-1} \norm[0]{J l_u(\bar{q}_h)}_K^2
      + \xi^{-1} \norm[0]{(I - J)z^0}_K^2
      + \xi^{-1} \norm[0]{(I - J)z'}_K^2
      + \gamma \norm[0]{p^0}_K^2
      + \gamma \norm[0]{p'}_K^2 }
    \\
    & \quad 
      \le C \del[1]{ \xi h_K^{-1} \norm[0]{\bar{q}_h - m_K(\bar{q}_h)}_{\partial K}^2
      + \min \cbr[0]{\gamma, h_K^{-2}\xi } \norm[0]{m_K(\bar{q}_h)}_K^2 },
  \end{align*}
  proving the result.
\end{proof}

\begin{lemma}
  \label{lem:estimate-m_K}
  Let $\bar{q}_h \in \bar{Q}_h^0$ be given and let
  $\tilde{l}_p(\bar{q}_h)$ be as defined in
  \cref{lem:weak-reduced-darcy-aux}. There exists a uniform constant
  $c > 0$ such that
  \begin{equation}
    \label{eq:estimate-m_K}
    \min\cbr[0]{\gamma, h_K^{-2}\xi} \norm[0]{m_K(\bar{q}_h)}_{ K}^2
    \le c \del[1]{ \gamma \norm[0]{\tilde{l}_p(\bar{q}_h)}_K^2
      + \xi \eta h^{-1}_K \norm[0]{\tilde{l}_p(\bar{q}_h) - \bar{q}_h}_{\partial K}^2}.
  \end{equation}
\end{lemma}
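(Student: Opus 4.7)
The plan is to prove \cref{eq:estimate-m_K} by a purely elementary argument that requires no input from the local solver equation for $\tilde{l}_p(\bar{q}_h)$. It reduces to a triangle-inequality decomposition of the facet-average $m_K(\cdot)$ on $\bar{q}_h$, followed by standard finite element trace bounds, with the two terms on the right of \cref{eq:estimate-m_K} arising naturally from the two pieces of the decomposition.

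Concretely, I would extend $m_K$ in the obvious way to $\tilde{l}_p(\bar{q}_h) \in Q_h$ via its trace on $\partial K$, split
\begin{equation*}
  m_K(\bar{q}_h) = m_K(\bar{q}_h - \tilde{l}_p(\bar{q}_h)) + m_K(\tilde{l}_p(\bar{q}_h)),
\end{equation*}
and estimate each piece separately. For the boundary-difference term, Cauchy--Schwarz on $\partial K$ combined with the shape-regularity relation $|K| \le C h_K |\partial K|$ yields $\norm[0]{m_K(\bar{q}_h - \tilde{l}_p(\bar{q}_h))}_K^2 \le C h_K \norm[0]{\bar{q}_h - \tilde{l}_p(\bar{q}_h)}_{\partial K}^2$. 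For the second term, the same Cauchy--Schwarz step followed by a discrete trace inequality \cite[Lemma 1.46]{di2011mathematical} gives $\norm[0]{m_K(\tilde{l}_p(\bar{q}_h))}_K^2 \le C h_K \norm[0]{\tilde{l}_p(\bar{q}_h)}_{\partial K}^2 \le C \norm[0]{\tilde{l}_p(\bar{q}_h)}_K^2$.

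Finally, I would multiply each of the two bounds by $\min\cbr[0]{\gamma, h_K^{-2}\xi}$, choosing a different side of the minimum for each piece. For the first piece use $\min\cbr[0]{\gamma, h_K^{-2}\xi} \le h_K^{-2}\xi$ so that the factor $h_K$ from the geometric estimate collapses into $h_K^{-1}\xi \le \xi \eta h_K^{-1}$ (using the running assumption $\eta > 1$), producing the $\xi \eta h_K^{-1}\norm[0]{\tilde{l}_p(\bar{q}_h) - \bar{q}_h}_{\partial K}^2$ term. For the second piece use $\min\cbr[0]{\gamma, h_K^{-2}\xi} \le \gamma$ directly, producing the $\gamma \norm[0]{\tilde{l}_p(\bar{q}_h)}_K^2$ term. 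Summing the two estimates gives \cref{eq:estimate-m_K} with a constant $c$ depending only on shape regularity and the polynomial degree, hence uniform in $h$, $\xi$, $\gamma$, and $\eta$.

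I do not foresee a real obstacle. The only conceptual point is that the two-term structure on the right of \cref{eq:estimate-m_K} is not arbitrary: it is matched by the two pieces of the triangle-inequality decomposition, with the boundary-difference piece carrying an extra factor $h_K$ that the $h_K^{-2}\xi$ side of the minimum is tailored to absorb, and the volume-trace piece carrying no such factor so that the $\gamma$ side is the natural choice.
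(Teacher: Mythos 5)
Your proof is correct. The overall strategy is the same as the paper's---an elementary averaging argument plus trace/shape-regularity estimates, with the two sides of the minimum matched to the two pieces of the decomposition---but your pivot is different and slightly more economical. The paper compares $m_K(\bar{q}_h)$ to the \emph{cell} average $M_K(\tilde{l}_p(\bar{q}_h)) = |K|^{-1}(\tilde{l}_p(\bar{q}_h),1)_K$, which forces a second triangle inequality on $\partial K$ (splitting $M_K(\tilde{l}_p)-\bar{q}_h$ into $M_K(\tilde{l}_p)-\tilde{l}_p$ and $\tilde{l}_p-\bar{q}_h$) together with the Pythagorean bound $\norm[0]{M_K(q)-q}_K\le\norm[0]{q}_K$ before the trace inequality can be applied. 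You instead pivot through the \emph{facet} average $m_K(\tilde{l}_p(\bar{q}_h))$, so the split $\bar{q}_h=(\bar{q}_h-\tilde{l}_p)+\tilde{l}_p$ happens directly at the level of boundary traces: one Cauchy--Schwarz step with $|K|/|\partial K|\le c\,h_K$ handles the difference term, and one discrete trace inequality handles the $\tilde{l}_p$ term. Your treatment of the minimum (bounding it by $h_K^{-2}\xi$ on the boundary piece, which carries the extra $h_K$, and by $\gamma$ on the volume piece) is equivalent to the paper's case split on $\theta=\min\cbr[0]{\gamma,h_K^{-2}\xi}$ and gives the same uniform constant dependence. No gap.
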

\begin{proof}
  We define $\theta := \min\cbr[0]{\gamma, h_K^{-2} \xi}$
  and
  $M_K(\tilde{l}_p(\bar{q}_h)) :=
  \frac{1}{|K|}(\tilde{l}_p(\bar{q}_h),1)_K$. Using the triangle inequality
  and that
  $\norm[0]{a}_K^2 = a^2|K| = \frac{|K|}{|\partial
    K|}\norm[0]{a}_{\partial K}^2$ for a constant $a$, we find
  \begin{equation}
    \label{eq:MKmkterm1}
    \begin{split}
      \theta \norm[0]{m_K(\bar{q}_h)}_{K}^2
      &\le
      2 \theta \norm[0]{M_K(\tilde{l}_p(\bar{q}_h))- m_K(\bar{q}_h)}_K^2
      + 2 \theta \norm[0]{M_K(\tilde{l}_p(\bar{q}_h))}_K^2
      \\
      &= 2 \theta \norm[0]{m_K(M_K(\tilde{l}_p(\bar{q}_h)) - \bar{q}_h)}_K^2
        + 2 \theta \norm[0]{M_K(\tilde{l}_p(\bar{q}_h))}_K^2
      \\
      &\le 2 \frac{|K|}{|\partial K|} \theta \norm[0]{m_K(M_K(\tilde{l}_p(\bar{q}_h)) - \bar{q}_h)}_{\partial K}^2
        + 2 \theta \norm[0]{\tilde{l}_p(\bar{q}_h)}_K^2,
    \end{split}
  \end{equation}
  where for the last inequality we used H\"older's inequality to bound
  the last term on the right hand side. Since
  $|K|/|\partial K| \le c h_K$ we then find
  \begin{equation}
    \label{eq:MKmkterm2}
    \begin{split}
      \theta \norm[0]{m_K(\bar{q}_h)}_{K}^2
      &\le C\del[2]{h_K \theta \norm[0]{m_K(M_K(\tilde{l}_p(\bar{q}_h)) - \bar{q}_h)}_{\partial K}^2
        + 2 \theta \norm[0]{\tilde{l}_p(\bar{q}_h)}_K^2}
      \\
      &\le C\del[2]{h_K \theta \norm[0]{M_K(\tilde{l}_p(\bar{q}_h)) - \bar{q}_h}_{\partial K}^2
        + \theta \norm[0]{\tilde{l}_p(\bar{q}_h)}_K^2},
    \end{split}
  \end{equation}
  where for the last inequality we again used H\"older's inequality
  (see, e.g., \cite[eq. (10.6.11)]{brenner2008mathematical}). Next,
  applying the triangle inequality, a discrete trace inequality
  $\norm[0]{q_h}_{\partial K} \le c_th_K^{-1/2}\norm[0]{q_h}_K$
  (\cite[Lemma 1.46]{di2011mathematical}), and the inequality
  $\norm[0]{M_K(q) - q}_K \le \norm[0]{q}_K$ by the Pythagorean
  theorem, we find
  \begin{align*}
    \theta \norm[0]{m_K(\bar{q}_h)}_{K}^2
    &\le C \big( h_K \theta \norm[0]{M_K(\tilde{l}_p(\bar{q}_h)) - \tilde{l}_p(\bar{q}_h)}_{\partial K}^2
      + h_K \theta \norm[0]{\tilde{l}_p(\bar{q}_h) - \bar{q}_h}_{\partial K }^2 
      + \theta \norm[0]{\tilde{l}_p(\bar{q}_h)}_{K}^2 \big)\\
    &\le C \big( \theta \norm[0]{M_K(\tilde{l}_p(\bar{q}_h)) - \tilde{l}_p(\bar{q}_h)}_{K}^2
      + h_K \theta \norm[0]{\tilde{l}_p(\bar{q}_h) - \bar{q}_h}_{\partial K }^2 
      + \theta \norm[0]{\tilde{l}_p(\bar{q}_h)}_{K}^2 \big)\\
    & \le C \big( \theta \norm[0]{\tilde{l}_p(\bar{q}_h)}_K^{2}
      + h_K^2 \xi^{-1} \theta \xi h^{-1}_K \norm[0]{\tilde{l}_p(\bar{q}_h) - \bar{q}_h}_{\partial K}^2  \big).
  \end{align*}
  If $\theta = \gamma \le h^{-2} \xi$, we find
  \begin{equation*}
    \gamma \norm[0]{m_K(\bar{q}_h)}_{K}^2
    \le C \del[1]{ \gamma \norm[0]{\tilde{l}_p(\bar{q}_h)}_K^{2}
    + \xi h^{-1}_K \norm[0]{\tilde{l}_p(\bar{q}_h) - \bar{q}_h}_{\partial K}^2 },
  \end{equation*}
  which is \cref{eq:estimate-m_K}. On the other hand, if
  $\theta = h_K^{-2} \xi \le \gamma$, we find
  \begin{equation*}
    h_K^{-2} \xi \norm[0]{m_K(\bar{q}_h)}_K^2
    \le C \del[1]{ h_K^{-2} \xi \norm[0]{\tilde{l}_p(\bar{q}_h)}_K^2
    + \xi h^{-1}_K \norm[0]{\tilde{l}_p(\bar{q}_h) - \bar{q}_h}_{\partial K}^2 }.
  \end{equation*}
  We again find \cref{eq:estimate-m_K} by noting that
  $h_K^{-2} \xi \le \gamma$ and $ \eta > 1$.
\end{proof}

We are now in a position to prove \cref{lem:mainLemmaStep2Darcy}.

\begin{proof}[Proof of \cref{lem:mainLemmaStep2Darcy}]
  Thanks to the inf-sup condition \cref{eq:inf-sup-H1}, given an
  arbitrary $\bar{q}_h \in \bar{Q}_h^0$, there exists a
  $\tilde{u}_h \in V_h$ such that
  \begin{subequations}
    \begin{align}
      \label{eq:existutildehVh-a}
      \norm[0]{\tilde{u}_h}_{\mathcal{T}_h}
      &\le c \tnorm{(l_p(\bar{q}_h), \bar{q}_h)}_p,
      \\
      \label{eq:existutildehVh-b}
      \tnorm{(l_p(\bar{q}_h), \bar{q}_h)}_p^2
      &= b_h(\tilde{u}_h, (l_p(\bar{q}_h), \bar{q}_h))
      = -(\nabla \cdot \tilde{u}_h, l_p(\bar{q}_h))_{\mathcal{T}_h} + \langle \bar{q}_h, \tilde{u}_h \cdot n \rangle_{\partial \mathcal{T}_h}.
    \end{align}
  \end{subequations}
  In \cref{eq3:localsolver-darcy} choose
  $(v_h,q_h) = (\xi \tilde{u}_h, 0)$ to find, after summing over all
  cells:
  \begin{equation}
    \label{eq:lubarqhtildevh}
    -(l_u(\bar{q}_h),\tilde{u}_h)_{\mathcal{T}_h} + \xi (l_p(\bar{q}_h), \nabla \cdot \tilde{u}_h)_{\mathcal{T}_h}
    = \xi \langle \bar{q}_h, \tilde{u}_h \cdot n \rangle_{\partial\mathcal{T}_h}.
  \end{equation}
  Multiplying \cref{eq:existutildehVh-b} by $\xi$, using
  \cref{eq:lubarqhtildevh}, the Cauchy--Schwarz inequality, and
  \cref{eq:existutildehVh-a}, 
  \begin{equation*}
    \xi \tnorm{(l_p(\bar{q}_h), \bar{q}_h)}_p^2
    = -(l_u(\bar{q}_h),\tilde{u}_h)_{\mathcal{T}_h}
    \le c \tnorm{(l_p(\bar{q}_h), \bar{q}_h)}_p \xi^{1/2} \tnorm{l_u(\bar{q}_h)}_{v,D}.
  \end{equation*}
  We therefore find that
  \begin{equation}
    \label{eq:kappalpqbarboundluqbar}
    \xi^{1/2} \tnorm{(l_p(\bar{q}_h), \bar{q}_h)}_p
    \le c \tnorm{l_u(\bar{q}_h)}_{v,D}.
  \end{equation}
  By definition of $\tnorm{\cdot}_{\boldsymbol{X}_h}$, the norm
  induced by the inner product defined in \cref{eq:darcyInnerProd},
  using \cref{eq:kappalpqbarboundluqbar}, and using
  \cref{lem:previous-condensed-estimate-Darcy,lem:estimate-m_K}
  \begin{equation}
    \label{eq:luqbarlpqbarqbar}
    \begin{split}
      &\tnorm{(l_u(\bar{q}_h), l_p(\bar{q}_h), \bar{q}_h)}_{\boldsymbol{X}_h}^2
      \\
      &\qquad
      \le c \del[1]{ \xi \norm[0]{h_K^{-1/2}(\bar{q}_h - m_K(\bar{q}_h)) }_{\partial \mathcal{T}_h}^2
        + \gamma \norm[0]{\tilde{l}_p(\bar{q}_h)}_{\mathcal{T}_h}^2
        + \xi \eta \norm[0]{h_K^{-1/2}(\tilde{l}_p(\bar{q}_h) - \bar{q}_h)}_{\partial \mathcal{T}_h}^2}
      \\
      &\qquad
      \le c \del[1]{ \xi\tnorm{\bar{q}_h}_{h,p}^2
        + \gamma \norm[0]{\tilde{l}_p(\bar{q}_h)}_{\mathcal{T}_h}^2
        + \xi \tnorm{(\tilde{l}_p(\bar{q}_h), \bar{q}_h)}_p^2
        }.
    \end{split}
  \end{equation}
  Following the same steps as in the proof of \cite[Lemma
  5]{rhebergen2018preconditioning} we furthermore have
  \begin{equation}
    \label{eq:kappaqbarboundltqbarqbar}
    \xi \tnorm{\bar{q}_h}_{h,p}^2 \le c \xi \tnorm{(\tilde{l}_p(\bar{q}_h), \bar{q}_h)}_p^2.
  \end{equation}
  And so, combining
  \cref{eq:luqbarlpqbarqbar,eq:kappaqbarboundltqbarqbar}
  \begin{equation*}
    \tnorm{(l_u(\bar{q}_h), l_p(\bar{q}_h), \bar{q}_h)}_{\boldsymbol{X}_h}^2
    \le c \del[1]{\gamma \norm[0]{\tilde{l}_p(\bar{q}_h)}_{\mathcal{T}_h}^2
      + \xi \tnorm{(\tilde{l}_p(\bar{q}_h), \bar{q}_h)}_p^2
    }
    = c \tnorm{(\tilde{l}_p(\bar{q}_h), \bar{q}_h)}_{q,D}^2.
  \end{equation*}
  The result follows using the coercivity result in
  \cref{eq:coerbndatildeD}.
\end{proof}

\section{The Stokes problem: Proofs}

We recall from \cite[Lemma 4.2]{rhebergen2017analysis} and \cite[Lemma
1]{rhebergen2018preconditioning} that there exist positive uniform constants
$\bar{c}_1,\bar{c}_2$ such that
\begin{subequations}
  \begin{align}
    \label{eq:chcoercivity}
    c_h(\boldsymbol{v}_h, \boldsymbol{v}_h)
    & \ge \bar{c}_1 \tnorm{\boldsymbol{v}_h}_{v,S}^2 \quad \forall \boldsymbol{v}_h \in \boldsymbol{V}_h,
    \\
    \label{eq:bhinfsupStokes}
    \bar{c}_2
    & \le \inf_{\boldsymbol{0}\ne \boldsymbol{q}_h \in \boldsymbol{Q}_h}
      \sup_{\boldsymbol{0}\ne \boldsymbol{v}_h \in \boldsymbol{V}_h}
      \frac{b_h(v_h, \boldsymbol{q}_h)}{\tnorm{\boldsymbol{v}_h}_v \tnorm{\boldsymbol{q}_h}_{0,p}}.    
  \end{align}
\end{subequations}
(The proofs in \cite{rhebergen2017analysis,rhebergen2018preconditioning}
consider norms with the usual gradient. They can be extended to the
case with the symmetric gradient using Korn's inequality
\cite{brenner2004korn}.)

\subsection{Proof of \cref{lem:cbciStokes}}
\label{ss:Proof_cbciStokes}

We first prove \cref{eq:wpconditions_b}. Let
$(\boldsymbol{u}_h,\boldsymbol{p}_h),
(\boldsymbol{v}_h,\boldsymbol{q}_h) \in \boldsymbol{X}_h$. Using the
single-valuedness of $\bar{v}_h$ on interior faces and $\bar{v}_h=0$
on $\Gamma$, the Cauchy--Schwarz inequality, and H\"older's inequality
for sums, we find:
\begin{equation}
  \label{eq:bhboundStokes}
  b_h(v_h, \boldsymbol{q}_{h}) \le c_2 \tnorm{\boldsymbol{q}_{h}}_{q,S} \tnorm{\boldsymbol{v}_h}_{v,S},
\end{equation}
with $c_2>0$ a uniform constant. \Cref{eq:wpconditions_b} follows
after combining \cref{eq:chboundStokes,eq:bhboundStokes}.

We now prove \cref{eq:wpconditions_i}. We proceed in a similar way to
our proof in \cref{ss:Proof_cbciDarcy}. We note that if for all
$(\boldsymbol{u}_h, \boldsymbol{p}_h) \in \boldsymbol{X}_h$ we can
find $(\boldsymbol{v}_h, \boldsymbol{q}_h) \in \boldsymbol{X}_h$
depending on $(\boldsymbol{u}_h, \boldsymbol{p}_h)$ such that
\begin{subequations}
  \begin{align}
  \label{eq:stokes22b-1}
    \tnorm{(\boldsymbol{v}_h,\boldsymbol{q}_h)}_{\boldsymbol{X}_h}
    &\le C_1 \tnorm{(\boldsymbol{u}_h,\boldsymbol{p}_h)}_{\boldsymbol{X}_h},
    \\
    \label{eq:stokes22b-2}
    a_h((\boldsymbol{u}_h, \boldsymbol{p}_h), (\boldsymbol{v}_h, \boldsymbol{q}_h))
    &\ge C_2 \tnorm{(\boldsymbol{u}_h,\boldsymbol{p}_h)}_{\boldsymbol{X}_h}^2,
  \end{align}
\end{subequations}
for positive uniform constants $C_1,C_2$, then
\cref{eq:wpconditions_i} holds with $c_i>0$ a uniform constant. Thanks
to \cref{eq:bhinfsupStokes}, given
$\boldsymbol{p}_h \in \boldsymbol{Q}_h$ there exists a
$\tilde{\boldsymbol{u}}_h \in \boldsymbol{V}_h$ such that
\begin{equation}
  \label{eq:bhvtphutcph}
  b_h(\tilde{u}_h, \boldsymbol{p}_h) = \tnorm{\boldsymbol{p}_h}_{0,p}^2
  \qquad \text{and} \qquad
  \tnorm{\tilde{\boldsymbol{u}}_h}_{v} \le \bar{c}_2 \tnorm{\boldsymbol{p}_h}_{0,p}.
\end{equation}
Define
$\boldsymbol{v}_h := \boldsymbol{u}_h + \delta \nu^{-1}
\tilde{\boldsymbol{u}}_h$ and $\boldsymbol{q}_h := -\boldsymbol{p}_h$
where $\delta > 0$ is a constant that will be determined
below. Similarly to \cref{eq:boundvhvD}, and using $\eta > 1$, we
find
\begin{equation}
  \label{eq:boundvhvSstokes}
  \tnorm{\boldsymbol{v}_h}_{v,S}^2 \le 2 \tnorm{\boldsymbol{u}_h}_{v,S}^2 + 2\delta^2\bar{c}_2^2 \eta \tnorm{\boldsymbol{p}_h}_{q,S}^2.
\end{equation}
\Cref{eq:boundvhvSstokes} together with
$\tnorm{\boldsymbol{q}_h}_{q,S} = \tnorm{\boldsymbol{p}_h}_{q,S}$ implies \cref{eq:stokes22b-1}. Note that
\begin{equation*}
  \begin{split}
    a_h((\boldsymbol{u}_h, \boldsymbol{p}_{h}), (\boldsymbol{v}_h, \boldsymbol{q}_{h}))
    &= c_h(\boldsymbol{u}_h, \boldsymbol{v}_h)
      + b_h(\boldsymbol{v}_h, \boldsymbol{p}_{h})
      + b_h(\boldsymbol{u}_h, \boldsymbol{q}_{h})
    \\
    &= c_h(\boldsymbol{u}_h, \boldsymbol{u}_h)
      + \delta \nu^{-1} c_h(\boldsymbol{u}_h, \tilde{\boldsymbol{u}}_h)
      + \delta \nu^{-1} b_h(\tilde{\boldsymbol{u}}_h, \boldsymbol{p}_{h}).
  \end{split}
\end{equation*}
Using \cref{eq:bhvtphutcph}, \cref{eq:chcoercivity},
\cref{eq:chboundStokes}, Young's inequality, and that $1 > \eta^{-1}$,
we find
\begin{align*}
  a_h((\boldsymbol{u}_h, \boldsymbol{p}_{h}),(\boldsymbol{v}_h, \boldsymbol{q}_{h}))
  &\ge \bar{c}_1 \tnorm{\boldsymbol{u}_h}_{v,S}^2
    - \delta c_1 \nu^{-1/2} \tnorm{\boldsymbol{u}_h}_{v,S} \tnorm{\tilde{\boldsymbol{u}}_h}_v
    + \delta \nu^{-1} \tnorm{\boldsymbol{p}_{h}}_{0,p}^2
  \\
  &\ge \del[2]{\bar{c}_1 - \frac{\delta}{2 \epsilon}} \tnorm{\boldsymbol{u}_h}_{v,S}^2
    + \delta \del[2]{1 - \frac{c_1^2 \epsilon \bar{c}_2}{2}} \tnorm{\boldsymbol{p}_{h}}_{q,S}^2.
\end{align*}
Choosing $\epsilon = 1/(c_1^2\bar{c}_2)$ and
$\delta = \bar{c}_1\epsilon$ we obtain \cref{eq:wpconditions_i}.

\subsection{Proof of \cref{lem:mainLemmaStep2Stokes}}
\label{ss:Proof_mainLemmaStep2Stokes}

Before proving \cref{lem:mainLemmaStep2Stokes} we require the
following preliminary results. First, we recall the following result
from \cite[Lemma 5]{rhebergen2018preconditioning}: There exist
positive uniform constants $c_1, c_2$ such that
\begin{equation}
  \label{eq:Lem5Rheb2018}
  c_1\nu \tnorm{\bar{v}_h}_{h,u}^2
  \le c_h((l_u(\bar{v}_h), \bar{v}_h), (l_u(\bar{v}_h),\bar{v}_h))
  \le c_2 \nu \tnorm{\bar{v}_h}_{h,u}^2
  \quad \forall \bar{v}_h \in \bar{V}_h.
\end{equation}
Next, we prove the following two lemmas.

\begin{lemma}
  \label{lem:condensed-estimate-stokes-lpS}
  Given $(\bar{v}_h, \bar{q}_{h}) \in \bar{V}_h \times \bar{Q}_h$, let
  $l_u(\bar{v}_h,\bar{q}_h)$ and $l_p(\bar{v}_h, \bar{q}_h)$ be as
  defined in \cref{lem:weak-reduced-stokes}. There exists a positive
  uniform constant $\tilde{c}_1$ such that
  \begin{equation*}
    \nu^{-1} \norm[0]{l_{p}(\bar{v}_h, \bar{q}_{h})}_{\mathcal{T}_h}^2
    \le \tilde{c}_1 \eta \del[2]{
      \tnorm{(l_u(\bar{v}_h, \bar{q}_{h}), \bar{v}_h)}_{v,S}^2
      + \nu^{-1} \eta^{-1} \norm[0]{h_K^{1/2} \bar{q}_{h}}_{\partial \mathcal{T}_h}^2
    }.
  \end{equation*}
\end{lemma}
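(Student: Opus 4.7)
The strategy is to estimate $l_p := l_p(\bar{v}_h, \bar{q}_h)$ cellwise by splitting $l_p|_K = l_p^0 + \bar{p}_K$ into a zero-mean part $l_p^0 := l_p|_K - \bar{p}_K$ and its constant cell mean $\bar{p}_K := |K|^{-1}\int_K l_p \dif x$. The key local identity comes from testing the local solver \cref{eq3:localsolver-stokes} with $(v_h, q_h) = (v, 0)$ for arbitrary $v \in V(K)$; after regrouping cell-boundary terms it reads
\begin{equation*}
(l_p, \nabla \cdot v)_K = c_h^K((l_u(\bar{v}_h, \bar{q}_h), \bar{v}_h), (v, 0)) + \langle \bar{q}_h, v \cdot n\rangle_{\partial K} \qquad \forall v \in V(K),
\end{equation*}
where $c_h^K$ denotes the restriction to cell $K$ of the bilinear form $c_h$ in \cref{eq:defch-stokes}. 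The cell-restricted continuity for $c_h^K$ (analog of \cref{eq:chboundStokes}) and Cauchy--Schwarz will bound the two terms on the right.

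For the constant mean $\bar{p}_K$ I take $v^1 := (x - x_K)/d$, where $x_K$ is the centroid of $K$; then $v^1 \in V(K)$, $\nabla \cdot v^1 = 1$, and standard shape-regularity estimates give $\|\varepsilon(v^1)\|_K \le C h_K^{d/2}$ and $\|h_K^{-1/2} v^1\|_{\partial K} \le C h_K^{d/2}$, whence $\tnorm{(v^1, 0)}_{v,S,K}^2 \le C \eta \nu h_K^d$. With $v = v^1$ the left-hand side of the identity becomes $\bar{p}_K |K|$; applying continuity and Cauchy--Schwarz, then dividing by $|K| \sim h_K^d$ and squaring, yields
\begin{equation*}
\|\bar{p}_K\|_K^2 \le C \eta \nu \tnorm{(l_u(\bar{v}_h, \bar{q}_h), \bar{v}_h)}_{v,S,K}^2 + C \|h_K^{1/2} \bar{q}_h\|_{\partial K}^2.
\end{equation*}

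For the zero-mean part $l_p^0 \in \mathbb{P}_{k-1}(K)$, the surjectivity of $\nabla \cdot : V(K) \to \mathbb{P}_{k-1}(K)$ (a shape-regularity-only inf-sup on the reference element) produces $v^0 \in V(K)$ with $\nabla \cdot v^0 = l_p^0$ and $\|v^0\|_K \le C h_K \|l_p^0\|_K$. Inverse and trace inequalities then give $\|\varepsilon(v^0)\|_K \le C \|l_p^0\|_K$ and $\|h_K^{-1/2} v^0\|_{\partial K} \le C \|l_p^0\|_K$, so $\tnorm{(v^0, 0)}_{v,S,K}^2 \le C \eta \nu \|l_p^0\|_K^2$. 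With $v = v^0$ the left-hand side becomes $(l_p, l_p^0)_K = \|l_p^0\|_K^2$; bounding the right-hand side and dividing through by $\|l_p^0\|_K$ gives $\nu^{-1} \|l_p^0\|_K^2 \le C \eta \tnorm{(l_u(\bar{v}_h, \bar{q}_h), \bar{v}_h)}_{v,S,K}^2 + C \nu^{-1} \|h_K^{1/2} \bar{q}_h\|_{\partial K}^2$.

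Adding these two estimates via $\|l_p\|_K^2 \le 2 \|l_p^0\|_K^2 + 2 \|\bar{p}_K\|_K^2$, summing over $K \in \mathcal{T}_h$, and scaling by $\nu^{-1}$ produces the claimed inequality. The main technical obstacle is the uniform-in-$k$, shape-regularity-only construction of $v^0$ (equivalently, the cell-local inf-sup for $\nabla \cdot : V(K) \to \mathbb{P}_{k-1}(K)$ without any boundary constraint); once that is in hand, the $\eta$-factor in the conclusion arises automatically from both pieces, since $v^1$ and $v^0$ are both nonzero on $\partial K$ and therefore activate the boundary penalty $\nu \eta \|h_K^{-1/2} v\|_{\partial K}^2$ inside $\tnorm{(v, 0)}_{v,S,K}^2$.
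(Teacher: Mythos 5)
Your proof is correct, and it takes a genuinely different route from the paper's. The paper invokes the global discrete inf-sup condition for $b_h$ on $\boldsymbol{V}_h\times\boldsymbol{Q}_h$ applied to the pair $(l_p,\eta^{-1/2}\bar q_h)$, obtaining a global test pair $\tilde{\boldsymbol v}_h$ with $-\nabla\cdot\tilde v_h=l_p$, and then substitutes $\nu^{-1}(\tilde v_h-m_K(\tilde{\bar v}_h))$ into the local solver and grinds through a long Young's-inequality bookkeeping; you instead work entirely cell by cell, using the identity $(l_p,\nabla\cdot v)_K=c_h^K((l_u,\bar v_h),(v,0))+\langle\bar q_h,v\cdot n\rangle_{\partial K}$ (which I verified follows from \cref{eq3:localsolver-stokes} with $q_h=0$) together with a local right inverse of $\nabla\cdot:[\mathbb{P}_k(K)]^d\to\mathbb{P}_{k-1}(K)$. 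The obstacle you flag is not really an obstacle: since $V(K)$ carries no boundary constraint, the divergence is surjective onto all of $\mathbb{P}_{k-1}(K)$, and a right inverse with $\norm[0]{v}_K\le Ch_K\norm[0]{q}_K$ follows from finite-dimensionality on the reference element plus affine scaling, with $C$ depending only on shape regularity and $k$ --- the same dependence as the trace and inverse inequalities already used throughout the paper. For the same reason your mean/zero-mean splitting is unnecessary (no compatibility condition is needed), though it is harmless. What your route buys is locality and the avoidance of the nontrivial global inf-sup \cref{eq:bhinfsupStokes} as an ingredient; it also handles the $\bar q_h$ contribution by a direct Cauchy--Schwarz on the face term rather than by folding $\eta^{-1/2}\bar q_h$ into the inf-sup pair. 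What the paper's route buys is reuse of machinery already established for the well-posedness analysis, at the cost of a heavier computation. Your tracking of the $\eta$-factor (arising from the boundary penalty activated by the nonzero trace of $v^1$ and $v^0$) is consistent with the stated conclusion.
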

\begin{proof}
  For ease of notation in this proof we define
  $l_u := l_u(\bar{v}_h, \bar{q}_{h})$ and
  $l_{p} := l_{p}(\bar{v}_h, \bar{q}_{h})$. Thanks to
  \cref{eq:bhinfsupStokes}, given
  $(l_{p}, \eta^{-1/2} \bar{q}_{h}) \in \boldsymbol{Q}_h$ there exists
  a
  $\tilde{\boldsymbol{v}}_h := (\tilde{v}_h, \tilde{\bar{v}}_h) \in
  \boldsymbol{V}_h$, such that
  \begin{equation}
    \label{eq:aux1-condensed-estimate-stokes-lpS}
    b_h(\tilde{v}_h, (l_{p}, \eta^{-1/2} \bar{q}_{h}))
    = \tnorm{(l_{p}, \eta^{-1/2} \bar{q}_{h})}_{0,p}^2
    \qquad \text{and} \qquad
    \tnorm{\tilde{\boldsymbol{v}}_h}_v \le \bar{c}_2 \tnorm{(l_{p}, \eta^{-1/2} \bar{q}_{h})}_{0,p}.
  \end{equation}
  Then, choosing
  $v_h = \nu^{-1} (\tilde{v}_h - m_K(\tilde{\bar{v}}_h))$ $q_{h} = 0$,
  and $s=0$ in \cref{eq3:localsolver-stokes}, and adding over all the
  elements, we find:
  \begin{equation}
      \label{eq:epsluepstildev}
      \begin{split}
          &(\varepsilon (l_u), \varepsilon (\tilde{v}_h))_{\mathcal{T}_h}
      - \langle \varepsilon (l_u) n, \tilde{v}_h - m_K(\tilde{\bar{v}}_h)\rangle_{\partial \mathcal{T}_h}
      - \langle l_u, \varepsilon ( \tilde{v}_h) n \rangle_{\partial \mathcal{T}_h}
    \\
    &+ \eta \langle h_K^{-1} l_u, \tilde{v}_h - m_K(\tilde{\bar{v}}_h)\rangle_{\partial \mathcal{T}_h}
      - \nu^{-1} (\nabla \cdot \tilde{v}_h, l_{p})_{\mathcal{T}_h}
    \\
    &\quad=
      - \langle \varepsilon(\tilde{v}_h)n, \bar{v}_h \rangle_{\partial \mathcal{T}_h}
      + \eta \langle h_K^{-1}\bar{v}_h, \tilde{v}_h - m_K(\tilde{\bar{v}}_h)\rangle_{\partial \mathcal{T}_h}
      - \nu^{-1} \langle \bar{q}_{h}, (\tilde{v}_h - m_K(\tilde{\bar{v}}_h)) \cdot n \rangle_{\partial \mathcal{T}_h}.
      \end{split}
  \end{equation}
  From \cref{eq:aux1-condensed-estimate-stokes-lpS} we have
  $b_h(\tilde{v}_h,(l_p,0)) = -(l_p,\nabla \cdot
  \tilde{v}_h)_{\mathcal{T}_h} = \tnorm{(l_p,0)}_{0,p}^2 =
  \norm[0]{l_p}_{\mathcal{T}_h}^2$. Since
  $\nabla \cdot \tilde{v}_h \in Q_h$ this implies
  $-\nabla \cdot \tilde{v}_h = l_p$.  Using this and rearranging terms
  in \cref{eq:epsluepstildev}, applying the Cauchy--Schwarz
  inequality, a discrete trace inequality \cite[Lemma
  1.46]{di2011mathematical} with constant $c_t > 0$, using that
  $\eta > 1$, and Young's inequality, we find
  \begin{equation}
    \label{eq:numin1lpbound}
    \begin{split}
      &\nu^{-1} \norm[0]{l_{p}}_{\mathcal{T}_h}^2
      \\
      &\le \big(\frac{1}{2 \epsilon_1}
      + \frac{c_t^2}{2 \epsilon_2}
      + \frac{c_t^2}{2 \epsilon_3}\big) \nu \norm[0]{\varepsilon( l_u)}_{\mathcal{T}_h}^2
      + \big(\frac{c_t^2}{2 \epsilon_4}
      + \frac{1}{2 \epsilon_5}
      + \frac{1}{2 \epsilon_6}\big) \nu \eta
      \norm[0]{h_K^{-1/2}(l_u - \bar{v}_h)}_{\partial \mathcal{T}_h}^2\\
      &\quad + \big(\frac{\epsilon_1}{2}
      + \frac{\epsilon_4}{2}\big) \nu^{-1} \norm[0]{\varepsilon ( \tilde{v}_h)}_{\mathcal{T}_h}^2
      + \big( \frac{\epsilon_2}{2} + \frac{\epsilon_5}{2}
      + \frac{\epsilon_7}{2} \big)\nu^{-1} \eta \norm[0]{h_K^{-1/2}( \tilde{v}_h - \tilde{\bar{v}}_h)}_{\partial \mathcal{T}_h}^2\\
      &\quad + \big(\frac{\epsilon_3}{2}
      + \frac{\epsilon_6}{2} + \frac{\epsilon_8}{2} \big)
      \nu^{-1} \eta \norm[0]{h_K^{-1/2} (\tilde{\bar{v}}_h - m_K(\tilde{\bar{v}}_h)}_{\partial \mathcal{T}_h}^2
      + \big( \frac{1}{2 \epsilon_7} 
      + \frac{1}{2 \epsilon_8} \big)
      \nu^{-1} \eta^{-1} \norm[0]{h_K^{1/2} \bar{q}_{h}}_{\partial \mathcal{T}_h}^2,      
    \end{split}
  \end{equation}
  where $\epsilon_i > 0$, $i \in \cbr[0]{1, \hdots, 8}$ are constants
  that will be chosen below. We remark that there exists a uniform
  constant $C>0$ such that
  (cf. \cite[eq. (47)]{rhebergen2018preconditioning})
  \begin{equation*}
    \nu^{-1} \eta \norm[0]{h_K^{-1/2} (\tilde{\bar{v}}_h -
      m_K(\tilde{\bar{v}}_h))}_{\partial \mathcal{T}_h}^2
    \le C \nu^{-1} \eta \tnorm{\tilde{\boldsymbol{v}}_h}_v^2.
  \end{equation*}
  The result now follows by combining this estimate and
  \cref{eq:aux1-condensed-estimate-stokes-lpS} with
  \cref{eq:numin1lpbound} and choosing
  $\epsilon_1 = \epsilon_2 = \epsilon_4 = \epsilon_5 = \epsilon_7 =
  1/(8\bar{c}_2)$ and
  $\epsilon_3 = \epsilon_6 = \epsilon_8 = 1/(8C\bar{c}_2\eta)$.
\end{proof}

\begin{lemma}
  \label{lem:condensed-estimate-stokes}
  Given $(\bar{v}_h, \bar{q}_{h}) \in \bar{V}_h \times \bar{Q}_h$, let
  $l_u(\bar{v}_h,\bar{q}_h)$ and $l_p(\bar{v}_h, \bar{q}_h)$ be as
  defined in \cref{lem:weak-reduced-stokes}. There exists a positive
  uniform constant $\tilde{c}_2$ such that
  \begin{equation*}
    \tnorm{(l_u(\bar{v}_h, \bar{q}_{h}), \bar{v}_h)}_{v,S}^2
    \le \tilde{c}_2 \del[2]{\nu \eta \tnorm{\bar{v}_h}_{h,u}^2
      + \nu^{-1} \eta^{-1} \norm[0]{h_K^{1/2}\bar{q}_{h}}_{\partial \mathcal{T}_h}^2}.
  \end{equation*}
\end{lemma}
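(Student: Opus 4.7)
The plan is to exploit linearity of the local solver from \cref{def:locSolverStokes} to split the argument into two independent contributions, one depending only on $\bar{v}_h$ and one depending only on $\bar{q}_h$, each of which can then be estimated using tools already in hand. Specifically, I would set $(z_q, r_q) := (u_h^L(0, \bar{q}_h, 0), p_h^L(0, \bar{q}_h, 0))$, so that by linearity $l_u(\bar{v}_h, \bar{q}_h) = l_u(\bar{v}_h) + z_q$. A triangle inequality for $\tnorm{\cdot}_{v,S}$ then yields
\begin{equation*}
  \tnorm{(l_u(\bar{v}_h, \bar{q}_h), \bar{v}_h)}_{v,S}^2 \le 2 \tnorm{(l_u(\bar{v}_h), \bar{v}_h)}_{v,S}^2 + 2 \tnorm{(z_q, 0)}_{v,S}^2,
\end{equation*}
so it suffices to control each of the two summands on the right.

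For the first summand, I would invoke the preliminary estimate \cref{eq:Lem5Rheb2018} directly. Together with the coercivity \cref{eq:chcoercivity}, this gives $\tnorm{(l_u(\bar{v}_h), \bar{v}_h)}_{v,S}^2 \le (c_2/\bar{c}_1)\, \nu\, \tnorm{\bar{v}_h}_{h,u}^2$, which is dominated by $\nu \eta \tnorm{\bar{v}_h}_{h,u}^2$ since $\eta > 1$.

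For the second summand, I would test the local problem defining $(z_q, r_q)$ with $(v_h, q_h) = (z_q, -r_q)$; the sign flip on the pressure test function is chosen precisely so that the two pressure-divergence terms in $a_h^K$ cancel exactly. Summing over cells, the left-hand side then collapses to $c_h((z_q, 0), (z_q, 0))$, while the right-hand side reduces to $-\langle \bar{q}_h, z_q \cdot n\rangle_{\partial \mathcal{T}_h}$. Applying Cauchy--Schwarz on the right, the elementary bound $\nu \eta \norm[0]{h_K^{-1/2} z_q}_{\partial \mathcal{T}_h}^2 \le \tnorm{(z_q, 0)}_{v,S}^2$ that follows from the definition of the norm, and the coercivity \cref{eq:chcoercivity} on the left, produces $\tnorm{(z_q, 0)}_{v,S}^2 \le \bar{c}_1^{-2}\, \nu^{-1}\eta^{-1} \norm[0]{h_K^{1/2} \bar{q}_h}_{\partial \mathcal{T}_h}^2$. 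Combining this with the estimate of the first summand delivers the claim.

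The main obstacle I anticipate is identifying the symmetric test function $(z_q, -r_q)$ that triggers the cancellation of the pressure-divergence terms in $a_h^K$; without this cancellation one is left with cross terms involving $r_q$ that would require a separate pressure estimate. Once the cancellation is spotted, the remainder is routine Cauchy--Schwarz and norm-definition manipulations, and the linearity-based splitting itself is completely natural given the structure of the local solver.
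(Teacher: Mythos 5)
Your proof is correct, but it is organized differently from the paper's. The paper proves this lemma in one shot: it tests the local problem with $v_h = l_u - m_K(\bar{v}_h)$ and $q_h = -l_p$, so that the left-hand side reproduces the on-cell pieces of $c_h((l_u,\bar{v}_h),(l_u,\bar{v}_h))$ while the right-hand side produces terms in $\bar{v}_h - m_K(\bar{v}_h)$ and $\bar{q}_h$, and then closes the estimate with Cauchy--Schwarz, the discrete trace inequality, Young's inequality with several tuned parameters $\epsilon_i$, and the coercivity \cref{eq:chcoercivity}. You instead split $l_u(\bar{v}_h,\bar{q}_h) = l_u(\bar{v}_h) + z_q$ by linearity of the local solver, dispatch the $\bar{v}_h$-contribution by citing the upper bound in \cref{eq:Lem5Rheb2018} together with \cref{eq:chcoercivity}, and handle the $\bar{q}_h$-contribution by a self-contained energy argument: testing with $(z_q,-r_q)$ yields $c_h((z_q,0),(z_q,0)) = -\langle \bar{q}_h, z_q\cdot n\rangle_{\partial\mathcal{T}_h}$, after which Cauchy--Schwarz, the trivial bound $\nu\eta\norm[0]{h_K^{-1/2}z_q}_{\partial\mathcal{T}_h}^2 \le \tnorm{(z_q,0)}_{v,S}^2$, and coercivity give $\tnorm{(z_q,0)}_{v,S}^2 \le \bar{c}_1^{-2}\nu^{-1}\eta^{-1}\norm[0]{h_K^{1/2}\bar{q}_h}_{\partial\mathcal{T}_h}^2$; all of these steps check out. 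The key cancellation mechanism (flipping the sign of the pressure test function to kill the divergence coupling) is the same one the paper uses, but your decomposition buys a cleaner argument with no $\epsilon$-bookkeeping and no explicit use of $m_K(\bar{v}_h)$, at the cost of leaning on the two-sided estimate \cref{eq:Lem5Rheb2018} inside this lemma rather than only in the final assembly of \cref{lem:mainLemmaStep2Stokes}; the resulting constant $\tilde{c}_2 = 2\max\cbr[0]{c_2/\bar{c}_1,\ \bar{c}_1^{-2}}$ is uniform, as required. The only implicit hypothesis is unique solvability of the local problem (needed for the linearity splitting), which the paper assumes throughout, so there is no gap.
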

\begin{proof}
  For ease of notation in this proof we define
  $l_u := l_u(\bar{v}_h, \bar{q}_{h})$ and
  $l_{p} := l_{p}(\bar{v}_h, \bar{q}_{h})$. Choosing
  $v_h = l_u - m_K(\bar{v}_h)$ and $q_{h} = - l_{p}$ in
  \cref{eq3:localsolver-stokes}, and reordering, we find:
  \begin{align*}
    &\nu \norm[0]{\varepsilon (l_u)}_K^2
      + 2 \nu \langle \varepsilon(l_u)n, \bar{v}_h - l_u) \rangle_{\partial K}
      + \nu \eta h_K^{-1} \norm[0]{l_u - \bar{v}_h}_{\partial K}^2
    \\
    &\quad = \nu \langle\varepsilon(l_u)n, \bar{v}_h - m_K(\bar{v}_h)\rangle_{\partial K}
      + \nu \eta h_K^{-1} \langle \bar{v}_h - m_K(\bar{v}_h), \bar{v}_h - l_u\rangle_{\partial K}
    \\
    &\qquad - \langle \bar{q}_{h}, (l_u - \bar{v}_h) \cdot n\rangle_{\partial K}
      - \langle \bar{q}_{h}, (\bar{v}_h - m_K(\bar{v}_h)) \cdot n\rangle_{\partial K}.
  \end{align*}
  By the Cauchy--Schwarz, Young's, and discrete trace
  \cite[Lemma 1.46]{di2011mathematical} inequalities,
  \begin{align*}
    &\nu \norm[0]{\varepsilon(l_u)}_K^2
      + 2 \nu \langle \varepsilon(l_u)n, \bar{v}_h - l_u\rangle_{\partial K}
      + \nu \eta h_K^{-1} \norm[0]{l_u - \bar{v}_h}^2_{\partial K}
    \\
    &\quad \le \frac{\epsilon_1 \nu }{2} \norm[0]{\varepsilon(l_u)}_{K}^2
      + \big( \frac{1}{2 \epsilon_2}
      + \frac{1}{2 \epsilon_3} \big)
      \nu \eta h_K^{-1} \norm[0]{l_u - \bar{v}_h}_{\partial K}^2
      \\
    &\qquad+ \big( \frac{c_t^2}{2 \epsilon_1}
      + \frac{\epsilon_2}{2} + \frac{1}{2}\big)
      \nu \eta h_K^{-1} \norm[0]{\bar{v}_h - m_K(\bar{v}_h)}_{\partial K}^2
      + \big( \frac{\epsilon_3}{2}
      + \frac{1}{2} \big)
      \nu^{-1} \eta^{-1} h_K \norm[0]{\bar{q}_{h}}_{\partial K}^2,
  \end{align*}
  where $\epsilon_i > 0$, $i \in \cbr[0]{1,2,3}$ are the Young's
  inequality constants that 
  will be chosen below. Summing over all
  cells and applying \cref{eq:chcoercivity}, we find:
  \begin{multline*}
    \bar{c}_1 \tnorm{(l_u, \bar{v}_h)}_{v,S}^2
    \le \frac{\epsilon_1 \nu }{2} \norm[0]{\varepsilon (l_u)}_{\mathcal{T}_h}^2
      + \big( \frac{1}{2 \epsilon_2}
      + \frac{1}{2 \epsilon_3} \big)
      \nu \eta \norm[0]{h_K^{-1/2}(l_u - \bar{v}_h)}_{\partial \mathcal{T}_h}^2
    \\
    + \big( \frac{c_t^2}{2 \epsilon_1}
      + \frac{\epsilon_2}{2} + \frac{1}{2}\big)
      \nu \eta \tnorm{\bar{v}_h}_{h,u}^2
      + \big( \frac{\epsilon_3}{2}
      + \frac{1}{2} \big)
      \nu^{-1} \eta^{-1} \norm[0]{h_K^{1/2}\bar{q}_{h}}_{\partial \mathcal{T}_h}^2.
  \end{multline*}
  The result follows after choosing $\epsilon_1 = \bar{c}_1$ and
  $\epsilon_2 = \epsilon_3 = \frac{2}{\bar{c}_1}$.
\end{proof}

We now prove \cref{lem:mainLemmaStep2Stokes}.

\begin{proof}[Proof of \cref{lem:mainLemmaStep2Stokes}]
  By definition of the norm $\tnorm{\cdot}_{\boldsymbol{X}_h}$, we
  have:
  \begin{multline}
    \label{eq:apluvlpvXhnormwriteout}
    \tnorm{(l_u(\bar{v}_h,\bar{q}_h), \bar{v}_h, l_p(\bar{v}_h,\bar{q}_h), \bar{v}_h)}_{\boldsymbol{X}_h}^2
    =
    \tnorm{(l_u(\bar{v}_h, \bar{q}_{h}), \bar{v}_h)}_{v,S}^2
    \\
    +    
    \nu^{-1} \norm[0]{l_{p}(\bar{v}_h, \bar{q}_{h})}_{\mathcal{T}_h}^2
    +
    \nu^{-1} \eta^{-1} \norm[0]{h_K^{1/2} \bar{q}_{h}}_{\partial \mathcal{T}_h}^2.
  \end{multline}
  The result follows after combining \cref{eq:apluvlpvXhnormwriteout}
  with
  \cref{lem:condensed-estimate-stokes-lpS,lem:condensed-estimate-stokes},
  and \cref{eq:Lem5Rheb2018}, and using that $\eta > 1$.
\end{proof}

\end{document}